\newtheorem{thm}{Theorem}
\newtheorem{prop}[thm]{Proposition}
\newtheorem{lem}[thm]{Lemma}
\newtheorem{cor}[thm]{Corollary}
\newtheorem{ex}[thm]{Example}
\newtheorem{rem}[thm]{Remark}
\newtheorem{definition}[thm]{Definition}
\newtheorem{conjecture}[thm]{Conjecture}
\numberwithin{equation}{section} \numberwithin{thm}{section}
\newcommand{\cal}{\mathcal}
\newcommand{\dist}{{\rm dist}\,}
\newcommand{\R}{{\mathbb R}}
\newcommand{\N}{{\mathbb N}}
\newcommand{\nor}{{\rm nor}\,}
\newcommand{\supp}{{\rm supp}\,}
\newcommand{\diam}{{\rm diam}\,}
\newcommand{\Nor}{{\rm Nor}\,}
\newcommand{\Tan}{{\rm Tan}\,}
\newcommand{\Ha}{{\cal H}}
\newcommand{\bd}{\partial}
\newcommand{\ind}[1]{\mathbf{1}_{#1}}
\newcommand{\eps}{\varepsilon}
\newcommand{\sL}{\cal{L}}
\newcommand{\sT}{\cal{T}}
\newcommand{\sC}{\cal{C}}
\newcommand{\udim}{\overline{\dim}}
\newcommand{\var}{{\rm var}}
\newcommand{\gC}{\widetilde{C}}
\newcommand{\esslim}[1]{\underset{#1}{\mathrm{esslim}\,}}
\newcommand{\esslimsup}[1]{\underset{#1}{\mathrm{esslimsup}\,}}
\newcommand{\esssup}[1]{\underset{#1}{\mathrm{esssup}\,}}
\def\en{\mathbb N}
\def\er{\mathbb R}
\def\H{\mathcal H}
\def\Q{\mathcal Q}
\newcommand{\mydot}{\,\cdot\,}
\newcommand{\card}{\operatorname{card}}
\newcommand{\co}{\operatorname{co}}
\newcommand{\conv}{\operatorname{conv}}
\newcommand{\aff}{\operatorname{aff}}
\newcommand{\cl}[1]{\overline{#1}}
\begin{document}
\title{Scaling exponents of curvature measures}
\author{Du\v{s}an Pokorn\'y}
\address{Charles University, Faculty of Mathematics and Physics, Sokolovsk\'a
83, 18675 Praha~8, Czech Republic}
\author{Steffen Winter}
\address{Karlsruhe Institute of Technology, Department of Mathematics, Kaiserstr.\ 89, 76133 Karlsruhe, Germany}

\date{\today}
\subjclass[2000]{28A75, 28A80}
\keywords{parallel set, curvature measure, curvature-direction measure, Minkowski dimension, S-dimension, fractal curvature, scaling exponent, self-similar set, locally flat}

\begin{abstract}
Fractal curvatures of a compact set $F\subset\R^d$ are roughly defined as suitably rescaled limits of the total curvatures of its parallel sets $F_\eps$ as $\eps$ tends to $0$ and have been studied in the last years in particular for self-similar and self-conformal sets.  This previous work was focussed on establishing the existence of (averaged) fractal curvatures and related fractal curvature measures in the generic case when the $k$-th curvature measure $C_k(F_\eps,\cdot)$ scales like $\eps^{k-D}$,
where $D$ ist the Minkowski dimension of $F$. In the present paper we study the nongeneric situation when the scaling exponents are not determined by the dimension of $F$. We demonstrate that the possibilities for nongeneric behaviour are rather limited and introduce the notion of local flatness, which allows a geometric characterization of nongenericy in $\R$ and $\R^2$. We expect local flatness to be characteristic also in higher dimensions.
The results enlighten the geometric meaning of the scaling exponents.
\end{abstract}

\maketitle

\section{Introduction}
Curvature measures are important geometric tools in fields such as convex geometry, differential geometry, integral geometry and geometric measure theory. They have been defined and studied for various different set classes such as convex sets and their unions, differentiable manifolds, sets with positive reach \cite{F59}, subanalytic sets \cite{Fu94} etc. In \cite{W1}, a certain extension to fractal sets $F\subset\R^d$ has been suggested by means of the approximation of $F$ by its parallel sets:
For a bounded set $F\subset\R^d$ and $\eps\ge 0$, denote by
\begin{equation*}
F_\eps =\{y\in\R^d: \dist(y,F)\le\eps\}
\end{equation*}
the $\eps$-parallel set of $F$. Suppose the curvature measures $C_0(F_\eps,\cdot),\ldots,$ $C_d(F_\eps,\cdot)$ of $F_\eps$ are well defined for almost all $\eps>0$ in the sense of Rataj and Zähle \cite{RZ05}, see more details below in Section~\ref{sec:pre}.
In this list of (in general signed) measures, the surface area $C_{d-1}(F_\eps,\cdot)=\frac 12 \Ha^{d-1}(\bd F_\eps\cap\cdot)$ and the volume measure $C_d(F_\eps,\cdot)=\lambda_d(F_\eps\cap\cdot)$ are included.
Denote the total masses of these measures by $C_k(F_\eps):=C_k(F_\eps,\R^d)$, $k=0,\ldots,d$. Then, for $s\geq 0$, the \emph{($s$-dimensional) $k$-th fractal curvature} of $F$ is
defined by
\begin{equation}\label{eq:fc}
\sC^s_k(F):=\esslim{\eps\to 0} \eps^{s-k} C_k(F_\eps),
\end{equation}
or, more generally, by
\begin{equation}\label{eq:afc}
\sC^s_k(F):=\lim_{\delta\to 0} \frac 1{|\ln\delta|}\int_\delta^1\eps^{s-k-1} C_k(F_\eps) d\eps,
\end{equation}
provided these limits exist (possibly being $+\infty$ or $-\infty$). It is clear that, if the essential limit in \eqref{eq:fc} exists, then the limit in \eqref{eq:afc} exists as well and both values coincide, justifying to speak of \eqref{eq:afc} as a generalization of \eqref{eq:fc}. In the literature, also the term \emph{average fractal curvatures} is used for the limits in \eqref{eq:afc}. The exponent $s$ has to be chosen appropriately.  Typically $s=D:=\dim_M F$ is the right choice for all $k$ and therefore, up to now, fractal curvatures have mainly been studied with this choice for the scaling exponents.

Indeed, the fractal curvatures $\sC_k^D(K)$ have first been considered in \cite{W1} for self-similar sets $K\subset\R^d$ satisfying the open set condition (OSC) under the additional assumption that the parallel sets $K_\eps$ are polyconvex. For nonlattice self-similar sets, the existence of the limit \eqref{eq:fc} (in fact not only as an essential limit but as an ordinary limit) was shown, while for lattice self-similar sets only the existence of the average limit \eqref{eq:afc} has been established. This fundamental difference between lattice and nonlattice self-similar sets had been observed before, in particular for Minkowski contents in \cite{lap93,fal95} (for $d=1$) and \cite{gatz} (for general $d$). The assumption of polyconvexity has been replaced by different weaker (but more technical) curvature bounds in \cite{Z2,RZ11,BZ}.
It is not needed for the cases $k=d$, cf.~\cite{gatz}, and $k=d-1$, see \cite{rw09}. A rather general assumption for $k\leq d-2$ -- used in \cite{RZ11} -- is the following integrability condition: There are constants $a,\eps_0>0$ such that
\begin{align} \label{eq:ic}
  \sup_{\delta\leq\eps_0}\frac 1{|\ln\delta|}\int_\delta^{\eps_0} \eps^{-k}\sup_{x\in F} C_k^\var(F_\eps,B(x,a\eps))\frac{d\eps}{\eps}<\infty\,.
\end{align}
This assumption does only ensure existence of average limits, but not of the limits in \eqref{eq:fc}, for which slightly stronger assumptions are required, e.g.\ the curvature bounds used in \cite{Z2} and \cite{WZ}, which are equivalent to the following condition, see \cite[Remark 3.1.3]{RZ11}: There are constants $a>1$ and $\eps_0>0$ such that
 \begin{align} \label{eq:cbc}
  \esssup{\eps\in(0,\eps_0], y\in F} \eps^{-k} C_k^\var(F_\eps,B(y,a\eps))<\infty\,.
\end{align}
The existence of associated fractal curvature measures is discussed in \cite{W1,WZ,RZ11} and corresponding results for curvature-direction measures are obtained in \cite{BZ}, where an integrability assumption (on the curvature direction measures) is used that is even weaker than \eqref{eq:ic}. Generalizations to self-conformal sets are studied in \cite{Kom,KK,B}.

As we have just outlined, previous work on fractal curvatures has concentrated
on the case $s=D$, leaving to the side the fact that $D$ is not always the right choice for the scaling exponents.
Indeed, a simple class of examples are fulldimensional cubes $Q$ in $\R^d$, cf.~\cite[Ex.~2.3.5]{W1}, which can be generated as self-similar sets and for which the choice $s=k$ is optimal for the $k$-th fractal curvature for all $k$, in that $\sC_k^k(Q)$ is positive and finite, while $\dim_M Q=d$ for these cubes. Some further, less trivial examples of self-similar sets for which some scaling exponent is different from the dimension are presented in Section~\ref{sect:2}. They motivate the investigations in this paper. It is one of our main objectives, to understand when such a nongeneric behaviour occurs for self-similar sets.

For this purpose, we need a satisfactory definition of the  $k$-th scaling exponent of a set $F\subset\R^d$, which is not easy to give in general. Roughly, it is the number $t$ for which $\sC^t_k(F)$ is nonzero and finite. There is not always such a $t$, but if there is, then $\sC_k^s(F)=0$ for all $s>t$ and $\sC_k^s(F)=\pm\infty$ for all $s<t$. Additional difficulties are that the curvature measures $C_k(F_\eps,\cdot)$ are signed and that therefore $C_k(F_\eps)$ may change its sign infinitely many times as $\eps$ tends to $0$ or even vanish for all $\eps>0$, and that the measures may not be defined for each $\eps>0$. This is partially resolved by working with the total variation measure $C_k^\var(F_\eps,\cdot)$ of $C_k(F_\eps,\cdot)$. Denote by $C_k^\var(F_\eps)$ its total mass. Two possible definitions arise more or less naturally from the limits in \eqref{eq:fc} and \eqref{eq:afc}. As will become clear later, each has its advantages and disadvantages and it is not clear which one is the best notion of scaling exponent. Let $F\subset\R^d$ be a bounded set for which the $k$-th curvature measure of $F_\eps$ is defined for almost all $\eps>0$. The first possibility is to define the \emph{$k$-th scaling exponent} of $F$ by
\begin{align} \label{eq:sk-def1}
  s_k(F)&:=\inf\{s\ge 0: \esslim{\eps\to 0} \eps^{s-k} C_k^{\var}(F_\eps) =0\}\\
&\phantom{:}=\sup\{s\ge 0: \esslimsup{\eps\to 0} \eps^{s-k} C_k^\var(F_\eps)=\infty\}\notag
\end{align}
which generalizes the definition suggested in \cite{W1,W3}. The second possibility is to use again some averaging and consider the number
\begin{align} \label{eq:sk-def2}
  a_k(F)&:=\inf\{s\ge 0: \lim_{\delta\to 0} \frac 1{|\ln\delta|}\int_\delta^1\eps^{s-k-1} C_k^\var(F_\eps) d\eps=0\}\\
&\phantom{:}=\sup\{s\ge 0: \limsup_{\delta\to 0} \frac 1{|\ln\delta|}\int_\delta^1\eps^{s-k-1} C_k^\var(F_\eps) d\eps=\infty\}.\notag
\end{align}
as the $k$-th scaling exponent. We will use the term \emph{average scaling exponent} for $a_k(F)$ in order to be able to distinguish both exponents. Regarding their relation, we point out that in general one has the inequality $a_k(F)\leq s_k(F)$, but both exponents need not coincide, as the example of the Cantor dust $C\times C\subset\R^2$ (where $C\subset\R$ is the middle third Cantor set) illustrates, for which $s_0(C\times C)=\infty$ while $a_0(C\times C)$ equals the dimension of this set, see~\cite[Example 4.2]{RZ11}.

Note that both exponents $s_k(F)$ and $a_k(F)$ are upper exponents. Corresponding lower exponents can be defined by replacing the (upper) limits in the definitions by lower limits. However, at present we see no application of these lower exponents. Below our results will be formulated for the exponents $s_k(F)$, although most of them hold equally for the exponents $a_k(F)$. This is due to the fact that most results are derived from estimates for the curvature measures which hold for (almost) all $\eps>0$ allowing to draw conclusions for both exponents. Only in cases, where the curvature conditions \eqref{eq:ic} or \eqref{eq:cbc} are involved, the results for $a_k(K)$ and $s_k(K)$ may differ. For instance, for self-similar sets $K$ the integrability condition \eqref{eq:ic} ensures the inequality $a_k(K)\leq D$, while only the stronger condition \eqref{eq:cbc} ensures $s_k(K)\leq D$. Condition \eqref{eq:ic} is not sufficient for this conclusion as the above example of the Cantor dust demonstrates. By definition, we also have $0\leq a_k(F)\leq s_k(F)$ in general.

We point out that, for $k=d$ and $k=d-1$, the essential limits in \eqref{eq:sk-def1} are ordinary limits and thus, for $k=d$, we recover the upper Minkowski dimension: $s_d(F)=\udim_M F$. The exponent $s_{d-1}(F)=\udim_S F$ is also known as the \emph{upper S-dimension} of $F$, cf.~\cite{rw09, rw11}. Since for self-similar sets $K$ satisfying OSC, Minkowski and S-dimension are known to exist, one even has $s_d(K)=\dim_M (K)$ and $s_{d-1}(K)=\dim_S K$ in this case. Moreover, it is not difficult to see that $a_k(K)=s_k(K)$ for $k=d$ and $k=d-1$.

In this paper we study the situation when some of the scaling exponents do not coincide with the dimension.
In Section~\ref{sec:loc-flat}, we introduce the notion of \emph{local flatness} (see Definition~\ref{def:loc-flat}) in order to characterize such nongeneric behaviour of the scaling exponents. We conjecture that a self-similar set is locally flat if and only if some of its scaling exponents differ from its dimension, see~Conjecture~\ref{conj}. As a main result of this paper, we resolve this conjecture for self-similar sets in $\R$ and $\R^2$, see Corollary~\ref{cor:main-R} and Theorem~\ref{thm:main-R2}, respectively. Corollary~\ref{cor:main-R} is essentially a special case of Theorem~\ref{fulldimension}, which resolves the conjecture for fulldimensional self-similar sets in $\R^d$.
One of the questions that arise on the way (and which is of independent interest) is, whether scaling exponents are independent of the dimension of the ambient space (in which the parallel sets are taken). Proposition~\ref{subsets_of_R} shows the independence in the case required for the derivation of the main results.

In Section~\ref{sec:s0}, we study the $0$-th scaling exponent for sets in $\R^d$ and obtain some sufficient conditions for this exponent to be equal to the dimension of the set. In particular, the disconnectedness of the complement or the total disconnectedness of the set itself are sufficient, cf.~Theorem~\ref{thm:conn-compl} and Corollary~\ref{cor:disconn}, respectively. More precisely, we obtain these results for the directional variant $\tilde s_0(F)$ of $s_0(F)$, which is introduced as follows. Let $\widetilde C_k(F_\eps,\cdot), k=0,1,\ldots,d-1$, be the $k$-th curvature-direction measure of $F_\eps$ (on the normal bundle $\nor(F_\eps)\subset\R^d\times S^{d-1}$ of $F_\eps$); for completeness let $\widetilde C_d(F_\eps,\cdot):=\lambda_d(F_\eps\cap \pi_1(\cdot))$, where $\pi_1$ is the projection onto the space component. Recall that $C_k(F_\eps, A)=\widetilde C_k(F_\eps, A\times S^{d-1})$, for any Borel set $A\subset\R^d$. Let $\widetilde C_k^\var(F_\eps,\cdot)$ denote the total variation measure and $\widetilde C_k^\var(F_\eps)$ its total mass. Then $\tilde s_k(F)$ and $\tilde a_k(F)$ are introduced by replacing in \eqref{eq:sk-def1} and \eqref{eq:sk-def2} $C_k^\var(F_\eps)$ by $\widetilde C_k^\var(F_\eps)$, i.e.,
\begin{align} \label{eq:tilde-sk-def1}
\tilde s_k(F):=\inf\{s\ge 0: \esslim{\eps\to 0} \eps^{s-k} \widetilde C_k^\var(F_\eps) =0\}.
\end{align}
From the relation $C_k^\var(F_\eps)\leq \tilde C_k^\var(F_\eps)$ it is easily seen that
the inequalities $s_k(F)\leq \tilde s_k(F)$ and $a_k(F)\leq \tilde a_k(F)$ are true, whenever one of these scaling exponents (and thus the others) are defined. It is believed that one has in fact equality in these relations in general, which is up to now only clear for the cases $k=d$ and $k=d-1$. For sets in $\R^2$, we show equality of the scaling exponents in Corollary~\ref{exp=direxp} below. The main reason for switching to the directional exponents is that one can take advantage of the integral representations derived by Zähle~\cite{Z1} for curvature-direction measures, see e.g.\ Lemma~\ref{lem:C0var}.

Section~\ref{sec:plane} is devoted to the resolution of Conjecture~\ref{conj} in dimension 2, for which many of the results of Sections~\ref{sec:loc-flat} and \ref{sec:s0} are employed.
Originally we have started this investigation of the  variability of the scaling exponents from a slightly different point of view, which can be summarized in the following question: Given a vector $(t_0,\ldots,t_d)\in\R^{d+1}$, does there exist a self-similar set $K\subset\R^d$ such that $s_k(K)=t_k$ for $k=0,\ldots,d$? That is, is it possible to prescribe the scaling exponents and construct a set with exactly those exponents?
Our results indicate that the family of vectors, for which such self-similar sets exist is very sparse. We have added some discussion of this in Section~\ref{sec:final}. However, we are still far from a complete answer. The same question may be asked for arbitrary sets in $\R^d$. We demonstrate in Example~\ref{ex:general} that there is more freedom for the choice of the scaling exponents when the self-similarity assumption is dropped.
Furthermore, as a byproduct of the proof of Theorem~\ref{thm:conn-compl}, we obtain in Theorem~\ref{thm:tiling} that a self-similar sets possesses a compatible self-similar tiling (in the sense of \cite{PW}) if and only if its complement is disconnected, resolving thus an open question in \cite{PW}.

\section{Preliminaries} \label{sec:pre}

\paragraph{\bf Curvature measures of parallel sets.} Denote the {\it closure of the complement} of a compact set $F$ by $\widetilde{F}$.
A distance $\eps\ge 0$ is called {\it regular} for the set $F$ if $\widetilde{(F_\eps)}$ has positive reach in the sense of Federer \cite{F59} and the boundary $\partial F_\eps$ is a Lipschitz manifold. A sufficient condition for regularity in this sense is that $\eps$ is a regular value of the distance function of $F$ in the sense of Morse theory, cf.~Fu \cite{Fu}. It is well known from the latter paper that, for $F\subset\R^d$ with $d\leq 3$, this property satisfied for Lebesgue almost all $\eps$. For regular $\eps$, the {\it curvature measures} of the sets $\widetilde{(F_\eps)}$ (which have positive reach) are well defined in the sense of Federer \cite{F59} and therefore the curvature measures of $F_\eps$ are determined via normal reflection:
\begin{equation}
C_k(F_\eps,\mydot):=(-1)^{d-1-k}C_k(\widetilde{F_\eps},\mydot)\, ,~~k=0,\ldots,d-1\,.
\end{equation}
For $k=d-1$, the surface area is included which coincides for $F_\eps$ and $\widetilde{(F_\eps)}$. For completeness, the volume measure $C_d(F_\eps,\mydot):=\lambda_d(F_\eps\cap\mydot)$ is added to this list. Let $C_k(F_\eps):=C_k(F_\eps,\R^d)$ be the total mass of $C_k(F_\eps,\cdot)$.

We recall some of the basic properties of curvature measures. Let $X,Y\subset\R^d$ be sets with positive reach. Then
\begin{enumerate}
\item $C_k(X,A)=C_k(g(X),g(A))$ for every Euclidean motion $g$ (\emph{motion covariance}),
\item $C_k(X\cup Y,\cdot)=C_k(X,\cdot)+C_k(Y,\cdot)-C_k(X\cap Y,\cdot)$, provided $X\cup Y$ has positive reach (implying that also $X\cap Y$ has positive reach, see \cite[Theorem 5.16]{F59}) (\emph{additivity}),
\item $C_k(\lambda X,\lambda \cdot)=\lambda^{k} C_k(X,\cdot)$ for every $\lambda>0$ (\emph{$k$-homogeneity}),
\item If $X\cap U = Y\cap U$ for some open set $U$, then $C_k(X,\cdot)|_{U}=C_k(Y,\cdot)|_U$ (\emph{local determinacy}),
\item $C_0(X)=\chi(X),$ provided $X$ is compact and $\chi$ denotes the Euler characteristic (\emph{Gauss-Bonnet theorem}).
\end{enumerate}

Beside curvature measures $C_k(X,.)$, we will also consider their directional variants $\gC_k(X,.).$
The \emph{curvature-direction measures} (or \emph{generalized curvature measures}) $\gC_k(X,.)$, $k=1,\ldots,d-1$ do not live on the boundary of $X$, but on the normal bundle of $X$, defined by
$$
\nor X:=\{(x,n)\in \partial X\times S^{d-1}: n\in\Nor(X,x)\},
$$
where $\Nor(X,x):=\{n\in \R^d:\langle n,y\rangle\leq 0 \text{ for all } y\in \Tan(X,x)\}$ is the \emph{normal cone} and  $\Tan(X,x)$ the \emph{tangent cone} of $X$ at a point $x\in X$, see e.g.~\cite[§4.3]{F59} for more details. Let $\pi_{1}:\er^{d}\times S^{d-1}\to \er^d,(x,n)\mapsto x$ and $\pi_{2}:\er^{d}\times S^{d-1}\to S^{d-1},(x,n)\mapsto n$ be the projections onto the first and the second component, respectively.
If $X\subset\R^d$ is a full-dimensional set, then $C_k(X,.)$ can be interpreted as the projection of the measure $\gC_k(X,.)$ with respect to $\pi_1$, that is $C_k(X,\cdot)=\gC(X,\cdot\times S^{d-1})=$
One of the advantages of the measures $\gC_k$ is the validity of the following integral formula due to Z\" ahle (see \cite[Theorem 3]{Z1}): If $X\subset\R^d$ has positive reach and $k\in\{0,\ldots,d-1\}$, then for any Borel set $B\subset\R^d\times S^{d-1}$
  \begin{align*}
     \gC_k(X,B)=c_k^{-1} \int_{\nor X} \ind{B}(x,n) \frac{\displaystyle \sum_{1\leq i_1<\dots<i_{d-1-k}\leq d-1}\prod_{j=1}^{d-1-k}\kappa_{i_j}(x,n)}{\prod_{j=1}^{d-1} \sqrt{1+\kappa_j^2(x,n)}} \Ha^{d-1}(d(x,n)).
  \end{align*}
Here, $\kappa_i(x,n)$ are the (generalized) principal curvatures corresponding to $(x,n)\in \nor X$, $\ind{B}$ is the characteristic function of $B$, and $c_k=(d-k)\alpha(d-k)$, with $\alpha(j)$ being the volume of the $j$-dimensional unit ball. 
Curvature-direction measures $\gC_k$ have similar properties as the ones listed above for $C_k$.
For more details and background on curvature measures, we refer to \cite{Z1,Z2} and the references given therein.\\

\medskip

\paragraph{\bf Self-similar sets.} The main object of study are self-similar sets satisfying the open set condition which we recall now briefly, introducing also some notation this way, which will be used throughout.

Suppose that $N\in\N$, $N\geq 2$. For $i=1,\ldots,N$, let $S_i:\R^d\to\R^d$ be a contracting similarity with contraction ratio $0<r_i<1$.
Then there is a unique nonempty compact set $K\subset\R^d$ invariant under the set mapping ${\bf S} (\mydot):=\bigcup_i S_i(\mydot)$. This set is known as the \emph{self-similar set} generated by the function system (shortly IFS) $\{S_1,\ldots, S_N\}$, cf.~\cite{H}.
The set $K$ (or, more precisely, the system $\{S_1,\ldots,S_N\}$) is said to satisfy the \emph{open set
condition} (OSC) if there exists a nonempty open set $O\subset\R^d$ such that
 $$
 \bigcup _i S_i O \subseteq O \quad \text{ and } \quad S_i O \cap
S_j O=\emptyset  \text{ for } i\neq j\,.
$$
Such a set $O$ is sometimes called a \emph{feasible open set} of the IFS $\{S_1,\ldots,S_N\}$, or of $K$.
The \emph{strong open set condition} (SOSC) holds for $K$ (or $\{S_1,\ldots,S_N\}$), if there exists a feasible open set $O$ which additionally satisfies $O\cap K\neq\emptyset$.
It was shown by Schief \cite[Theorem~2.2]{S}, that in $\R^d$ OSC and SOSC are equivalent, i.e., for $K$ satisfying OSC, there exists always a feasible open set $O$ with $O\cap K\neq\emptyset$.

The unique solution $s=D$ of the equation $\sum_{i=1}^N r_i^s=1$ is called the \emph{similarity dimension} of $K$.  It is well known that for any self-similar set $K$ satisfying OSC, $D$ coincides with both the Minkowski dimension $\dim_M K$ and the Hausdorff dimension $\dim_{H}K$ of $K$, which in particular means that in this case $\dim_M K=\dim_{H}K$ holds, cf.\ e.g.~\cite[Theorem 9.2]{fal}.

Let $\Sigma^m_N:=\{1,\ldots,N\}^m$ be set of all words of length $m$ over the alphabet $\{1,\ldots,N\}$ and denote $\Sigma^*_N:=\bigcup_{j=0}^\infty\Sigma^j_N$.
For $\omega=\omega_1\ldots \omega_n\in\Sigma^*_N$ we denote by $|\omega|$ the \emph{length of $\omega$}
(i.e., $|\omega|=n$) and by $\omega|k:=\omega_1\ldots \omega_k$ the subword of the first $k\le n$ letters.
We often abbreviate $r_\omega:=r_{\omega_1}\ldots r_{\omega_n}$ or $S_\omega:=S_{\omega_1}\circ\ldots\circ S_{\omega_n}$ and similarly for other notions concerning self similar sets.
Furthermore, let $r_{\min}:=\min\{r_i: i=1,\ldots, N\}$.

\medskip

\paragraph{\bf Further notation.} Throughout we use the following more or less standard notation without further mention.
For $x\in\er^d$ and $\eps>0$ we denote $B(x,\eps)$ the closed ball with centre $x$ and radius $\eps$. For the topological boundary of a set $A\subset\er^d$ we write $\partial A$. $A^\circ$ and $A^c$ are used for the interior and the complement of $A$, respectively. The $s$-dimensional Hausdorff measure is denoted by $\H^s$ and $S^{d-1}$ is the unit sphere in $\er^d.$

\section{Basic examples} \label{sect:2}

We start with a simple example of a class of self-similar sets for which the $0$-th scaling exponents are not equal to the dimension.

\begin{figure}
\begin{minipage}{60mm}
  \includegraphics[width=50mm]{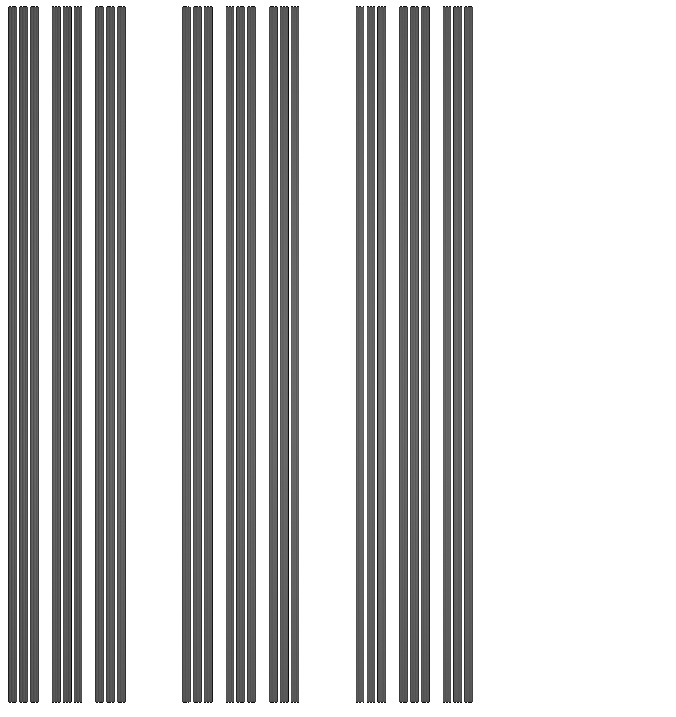}
 \end{minipage}
\begin{minipage}{65mm}
  \includegraphics[width=70mm]{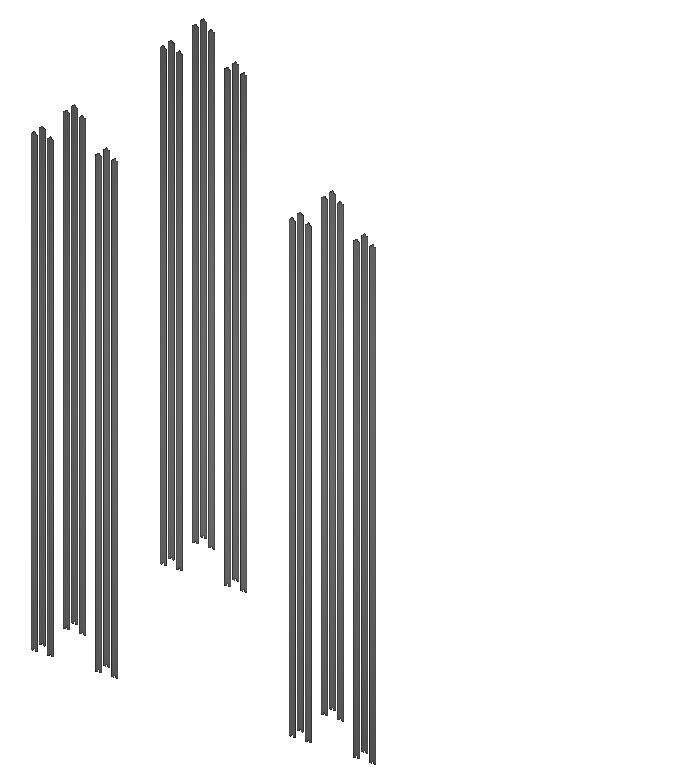}
\end{minipage}
\caption{\label{fig:ex1}(Left): The set $F^{4,3}=K^{4,3}\times[0,1]$ from Example~\ref{ex:1}. (Right): The set $\hat F^{4,3}=\hat K^{4,3}+(\{0\}\times[0,1])$ from Example~\ref{ex:locallyflat_but_not_globally} with $t_1=0$, $t_2=1/6$ and $t_3=-1/6$.}
\end{figure}

\begin{ex} \label{ex:1}
There is a set $N\subset [0,1]$, dense in $[0,1]$, such that for each $a\in N$ there exists a self-similar set $F=F(a)\subset\er^{2}$ such that $s_{0}(F)=a$ and $s_{1}(F)=s_{2}(F)=a+1.$
\end{ex}

\begin{proof}
Let $n,m\in \N$ with $m<n.$ For $k,l=1,...,n$, define $\varphi_{k,l}:\R^2 \to \R^2$ by
\begin{equation*}
\varphi_{k,l}(x,y)=\frac{1}{n}\bigl(x+k-1,y+l-1\bigr).
\end{equation*}
Let $K^{n,m}$ and $F^{n,m}$ be the self-similar sets generated by the mappings $\varphi_{k,l},$ $k=1,...,m$, $l=1$ and $\varphi_{k,l},$ $k=1,...,m$, $l=1,...,n$, respectively; see~Figure~\ref{fig:ex1} (left) for an illustration. (If $K^{n,m}$ is viewed as a subset of $\R$, then we have $F^{n,m}=K^{n,m}\times[0,1]$, however, in the sequel, $K^{n,m}$ is studied as a subset embedded in $\R^2$.)
Since both $K^{n,m}$ and $F^{n,m}$ satisfy OSC, we have $\dim_M K^{n,m}=\frac{\log m}{\log n}$ and $\dim_M F^{n,m}=1+\frac{\log m}{\log n}$.
Let
\begin{equation*}
N:=\left\{\frac{\log m}{\log n}:n,m\in \N , m<n\right\}.
\end{equation*}
Using the inequality
\begin{equation*}
\left|\frac{\log m+1}{\log n}-\frac{\log m}{\log n}\right|=\frac{\log \frac{m+1}{m}}{\log n}\leq\frac{2}{\log n}
\end{equation*}
and observing that
$\frac{m-1}{m}\nearrow 1$ and $\log n\to\infty$ as $n\to\infty$,
it is easily seen that $N$ is dense in $[0,1].$

Choose $a=\frac{\log m}{\log n}\in N$ and consider the set $F=F(a):=F^{n,m}.$
First observe that $C_{0}^\var((K^{n,m})_{\varepsilon})=C_{0}^\var((F^{n,m})_{\varepsilon})$ for every $\varepsilon>0$ and therefore $s_{0}(K^{n,m})=s_{0}(F^{n,m})$.
Let $U:=(-\frac{1}{n},1-\frac{1}{n})\times\er$. Then $U$ is a feasible open set for $K^{n,m}$. 
Put $\varepsilon_{0}=\frac{1}{3n}$ and $B=B(0,\varepsilon_{0})\subset U_{-\varepsilon_{0}}.$
Then, for every $r\in(0,\varepsilon_{0}]$ we have the arc $S(r):=\bd B(0,r)\cap \left(-\infty,0\right]\times\left(-\infty,0\right]$ contained in $B\cap \bd (K^{n,m})_{r}.$
Note that $C_{0}((K^{n,m})_{r},S_{r})=\frac{1}{4}$.
Now it is sufficient to use Proposition~\ref{prop:w1-2-3-8} (or Theorem~2.3.8 in \cite{W1}) for $B$, $\varepsilon_{0}$ as above and $\beta=\frac{1}{4}$.
We obtain
\begin{equation*}
s_{0}(F^{n,m})=s_{0}(K^{n,m})=\dim_M(K^{n,m})=\frac{\log m}{\log n}.
\end{equation*}
Finally, since $F^{n,m}$ has empty interior, we use \cite[Corollary~3.4]{rw09} to obtain $s_{1}(F^{n,m})=s_{2}(F^{n,m})=\dim_M(F^{n,m}).$

Summing up, we proved that for $a=\frac{\log m}{\log n}\in N$ the vector of scaling exponents of the set $F(a)$ is equal to
$
(a,1+a,1+a).
$
\end{proof}

The next class of examples in the plane deals with the difference between the Minkowski dimension of a self-similar set and the Minkowski dimension of its boundary (resulting in a difference between $s_{2}(K)$ and $s_{1}(K)$).
Obviously, such a difference can only occur if the set has interior points, that is, if the set has the dimension of the ambient space.

\begin{figure}
\begin{minipage}{62mm}
  \includegraphics[width=62mm]{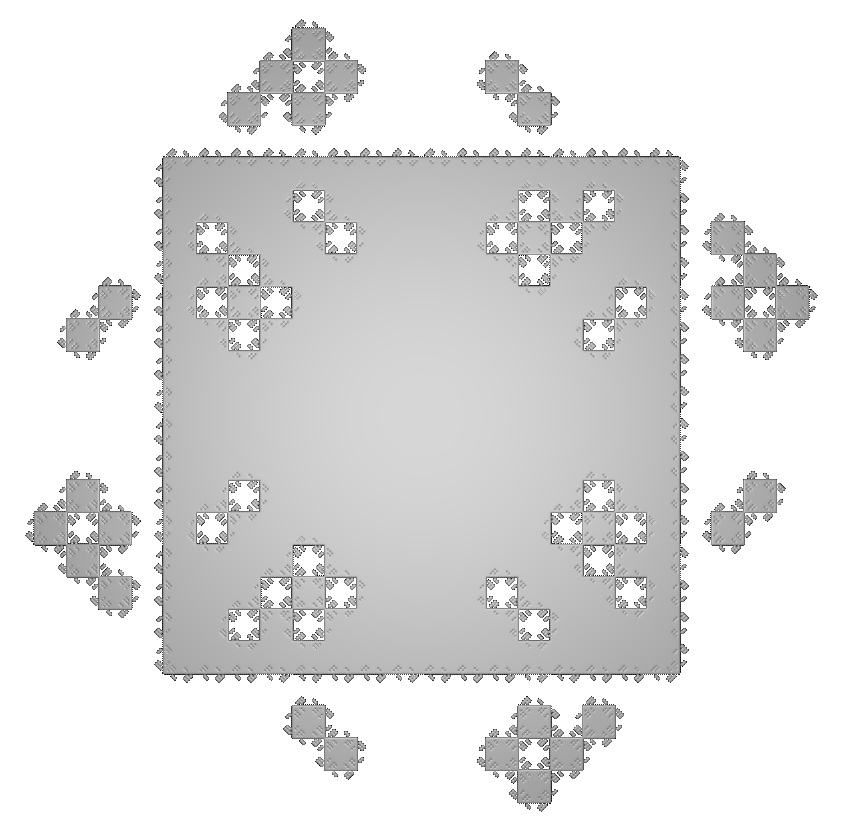}
 \end{minipage}
\begin{minipage}{62mm}
  \includegraphics[width=62mm]{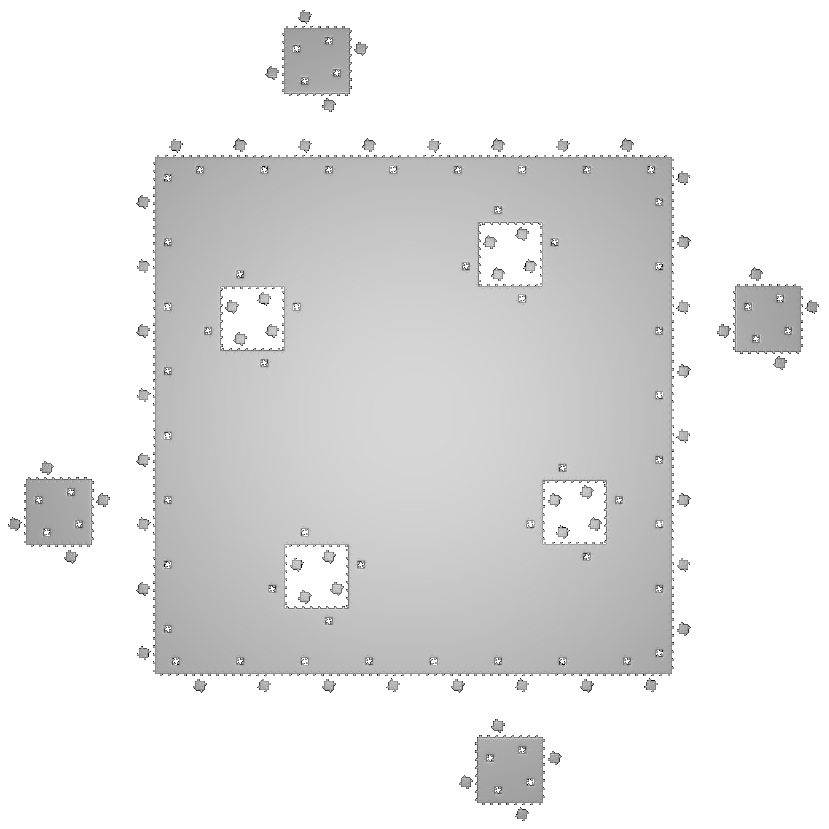}
\end{minipage}
\caption{\label{fig:ex2}Two realizations of the self-similar sets $K^{k,m}$ discussed in Example~\ref{ex:2} for $k=4$, $m=7$ (left) and for  $k=3$, $m=1$ (right).}
\end{figure}

\begin{ex}  \label{ex:2}
There is a dense subset $M$ of $[1,2]$ such that for each $a\in M$ there exists a self-similar set $F=F(a)\subset\R^2$ with $\dim_M F=2$ and $\dim_M \partial F=a$.
\end{ex}

\begin{proof}
The basic idea is to subdivide the square $W:=[0,1]^2$ into $2^{2k}$ sub-squares each of side length $2^{-k}$. Some of them are kept in their position and the others are rotated to the outside of $W$ in such a way that the OSC is not violated; see~Figure~\ref{fig:ex2} for an illustration.
This procedure can be described by an IFS consisting of $4^k$ similarities each with ratio $2^k$ and such that each square is the image of $W$ under one of these mappings. While the dimension of the generated self-similar set is always $2$, the number of rotated squares will determine the dimension of its boundary.

Let
$f_{1}(x,y)=\frac{1}{2}(x-\frac{1}{2},y-\frac{1}{2})$, $f_{2}(x,y)=\frac{1}{2}(x+\frac{1}{2},y-\frac{1}{2})$,
$f_{3}(x,y)=\frac{1}{2}(x-\frac{1}{2},y+\frac{1}{2})$, $f_{4}(x,y)=\frac{1}{2}(x+\frac{1}{2},y+\frac{1}{2})$
and let $g_{i},$ $i=0,\dots3$ be the rotation around the point $S^{i}(\frac 12,0)$ by $\pi,$
where $S$ is the rotation around the point $(0,0)$ by $\frac{\pi}{2}.$

For $k\geq 2$, we define an equivalence relation $\approx_{k}$ on the set $\{1,\dots,4\}^{k}$ as follows:
\begin{equation*}
  \omega\approx_{k} \sigma \text{ if and only if } f_{\omega}(W)=S^{i}\circ f_{\sigma}(W) \text{ for some } i\in\en
\end{equation*}
Note that each equivalence class of $\approx_{k}$ consists of 4 words each corresponding to a square (of side length $2^{-k}$) in a different image $f_i(W)$ of $W, i=1,\ldots, 4$.

Let $\Q'_{k}$ denote the system of equivalence classes of $\approx_{k}$. For a word $\omega\in\{1,\ldots, 4\}$, we write $Q(\omega)$ for the class in $\Q'_k$ containing $\omega$.
Let the subsystem $\Q''_{k}$ of $Q'_{k}$ be defined by
\begin{equation}
  \Q''_{k}:=\{Q\in \Q'_{k}| f_{\omega}(W)\cap \bigcup_{i=1}^4 \bd f_i(W) =\emptyset \text{ for some } \omega\in Q \}.
\end{equation}
That is, in $\Q''_{k}$ we exclude all squares touching the boundary of one of the sets $f_i(W)$. For reasons of symmetry this is consistent with the equivalence relation in the sense that $Q\in\Q''_k$ implies that for each  $\omega\in Q$, the square $f_\omega(W)$ does not intersect the set $\bigcup_{i=1}^4 \bd f_i(W)$.
We will also need to avoid diagonals and so
we consider the system $\Q'''$ which is equal to $\Q''$ without the squares that have the centre on both diagonals $\{x=y\}$ and $\{x=-y\}$.
The cardinality of $\Q'''_{k}$ is $|\Q'''_{k}|=4^{k-1}-2^{k+1}+2.$
Finally, we place a chessboard pattern over the remaining squares and exclude all the black squares. This can be done in a $\approx_k$-consistent way as follows: Let $\Q_{k}$ the subsystem of all $Q\in\Q'''_{k}$ such that for each $\omega=\omega_1\omega_1 \ldots \omega_k\in Q$ the number $\omega_1+\omega_k$ is even.
Note that $|\Q_{k}|=\frac 12 |\Q_{k}'''|=2\cdot4^{k-2}-2^{k}+1$. Moreover, $\Q_k$ has the following \emph{`chessboard'} property: For any $P,Q\in \Q_{k}$, $\omega\in P$, $\sigma\in Q$ the white squares $f_{\omega}(W)$ and $f_{\sigma}(W)$ have no common side.

Fix some integers $k\geq 2$ and $0\leq m\leq |\Q_{k}|$. Choose an arbitrary subsystem $\Q^{m}_{k}$ of $\Q_{k}$ with $|\Q^{m}_{k}|=m.$
This can be interpreted as a coloring the corresponding amount of white chess-tiles with a third color.
First define
$$
H:=\{(x,y)\in(0,\infty)\times\er: x\leq|y|\}
$$
and put $H_i:=S^{i}(H)$, $i=0,...,3.$
Then define a mapping $\Psi:\{1,...,4\}^{k}\to\{0,...,3\}$ such that $\Psi(\omega)=i$ if and only in $f_{\omega}((0,0))\in H_i.$
Define a mapping $h^{k,m}_{\omega}:\R^2\to\R^2$ for $\omega\in\{1,\dots,4\}^{k}$ by
\begin{equation*}
h^{k,m}_{\omega} = \left\{
  \begin{array}{l l}
    g_{\Psi(\omega)}\circ f_{\omega}, &\text{if } Q(\omega)\in\Q^{m}_{k},\\

    f_{\omega}, & \text{otherwise}.\\
  \end{array} \right.
\end{equation*}
Let $K^{k,m}$ be the self-similar set generated by the IFS $\{h^{k,m}_{\omega}\mid \omega\in\{1,\dots,4\}^{k}\}$.
First, it is easy to see that $K^{k,m}$ satisfies the open set condition (using e.g.\ the finite clustering property in \cite[Theorem~2.2(v)]{S}),
and, since $K^{k,m}$ is generated by $4^{k}$ mappings each with similarity ratio $2^{-k}$, we obtain $\dim_M K^{k,m}=2.$

Moreover, one can see that $\partial K^{k,m}$ is a union of four mutually isometric self-similar sets (each of them naturally corresponding to one side of $W$).
Each of them, say $B_1$ is a similar copy on a self similar set generated by the similarities
\begin{align*}
F_{l}(x,y)&=(2^{-k}x+l-\frac{1}{2},2^{-k}y),&l=0,\ldots,2^{k}-1,\\
G_{\omega}^{i}&=T_{\omega}\circ 2^{-k}S^{i}\circ T_{(0,-\frac{1}{2})}, & Q(\omega)\in \Q^{m}_{k}, \Psi(\omega)=3, i=0,\dots,3,\\
H_{\omega}^{i}&=g_3\circ T_{\omega}\circ 2^{-k} S^{i}\circ T_{(0,-\frac{1}{2})}, & Q(\omega)\in \Q^{m}_{k}, \Psi(\omega)=3, i=0,\dots,3,
\end{align*}
where $T_{(0,-\frac{1}{2})}$ is a translation by the vector $(0,-\frac{1}{2})$ and $T_{\omega}$ is a translation by vector $h^{k,m}_{\omega}((0,0))-(0,-\frac{1}{2}).$
In this IFS there are $2^{k}$ mappings $F_{l}$, $4m$ mappings $G_{\omega}^{i}$ and $4m$ mappings $H_{\omega}^{i}$ all of them with similarity ratio $2^{-k}.$
Note that the IFS satisfies OSC with $O:=\co\{\pm(\frac{1}{2},0),\pm(0,\frac{1}{2})\}^{\circ}$ being a feasible open set, which implies that
\begin{equation*}
\dim_M \partial K^{k,m}=\frac{\log(2^{k}+8m)}{\log(2^{k})}.
\end{equation*}
Finally observe that, similarly as in the previous example, the set
\begin{equation*}
M:=\left\{\frac{\log(2^{k}+8m)}{\log(2^{k})}:k\geq2, 0\leq m\leq 2\cdot4^{k-2}-2^{k}+1\right\}
\end{equation*}
is dense in the interval $[1,2].$
\end{proof}

Note that the possible values of scaling exponents in the examples above are fairly restricted by the OSC and it is not clear whether there exist self-similar sets (satisfying OSC) for each value in the intervals $[0,1]$ or $[1,2]$ instead of just the dense sets $N$ and $M$, respectively.

\section{Local flatness} \label{sec:loc-flat}

The above examples motivate the following definition and conjecture.

\begin{definition} \label{def:loc-flat}
Let $K\subset\er^{d}$ and $m\in\{1,\ldots, d\}$. We say that $K$ is \emph{locally $m$-flat} if for every $x\in K$ and $\eps>0$ there is a closed cube $C\subset B(x,\eps)$ such that $C^\circ\cap K$ is nonempty (where $C^\circ$ denotes the interior of $C$) and $C\cap K$ is similar to $[0,1]^{m}\times P$ for some set $P\subset\er^{d-m}$. (For $m=d$ this should be interpreted as $C\cap K$ being similar to $[0,1]^d$.)
We say that $K$ is \emph{locally flat}, if it is locally $m$-flat for some $1\leq m\leq d$.

In contrast to this local notion, we call the set $K$ \emph{(globally) $m$-flat} if there exists a set $Q\subset\er^{d-m}$ such that $K$ itself is similar to $[0,1]^m\times Q$. That is, a set is globally flat if it is a product set with one factor being a cube (of some dimension).
\end{definition}

Note that all sets in Example~\ref{ex:1} are locally $1$-flat, while all sets in Example~\ref{ex:2} are locally $2$-flat.
It is clear that global $k$-flatness implies local $k$-flatness, but the converse is not necessarily true (see Example~\ref{ex:locallyflat_but_not_globally} below).
Note that we do not require the cube $C$ in the definition of local flatness to contain the point $x$. Moreover, for any $m\geq 1$, local $m+1$-flatness implies local $m$-flatness. We consider this notion important for self-similar sets, because we conjecture local flatness to be characteristic for the occurrence of scaling exponents that are strictly smaller than the dimension. More precisely, we believe the following is true:

\begin{conjecture}\label{conj}
Let $K$ be a self-similar set in $\er^{d}$ satisfying OSC. Then $s_{k}(K)\geq\dim_M K$ for every $k\in\{0,1,\ldots,d\}$ if and only if $K$ is not locally flat.
If $K$ satisfies additionally the integrability conditions \eqref{eq:ic}, then the assertion holds with `$=$'-signs instead of `$\geq$'
\end{conjecture}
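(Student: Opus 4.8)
The plan is to prove the two implications separately, in each case reducing to an estimate for the way the self-similar structure distributes the (variation of the) curvature measures of $K_\eps$ over the cylinder sets $S_\omega(K)$, $\omega\in\Sigma^*_N$. Write $D:=\dim_M K$ and fix a feasible open set $O$ with $O\cap K\ne\emptyset$ (legitimate because OSC implies SOSC, \cite{S}). The engine common to both directions is the homogeneity relation $C_k(S_\omega(K)_\eps,S_\omega(\cdot))=r_\omega^{\,k}\,C_k(K_{\eps/r_\omega},\cdot)$, and its analogue for $\widetilde C_k$, together with the bounded overlap of the sets $S_\omega(O)$ with $r_\omega\sim\eps$: these allow one to compare $C_k^\var(K_\eps)$, up to multiplicative constants, with a sum over a bounded range of levels of local contributions $C_k^\var(K_{\eps/r_\omega},S_\omega(\overline O))$. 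This is exactly the renewal scheme already underlying \eqref{eq:ic}, \eqref{eq:cbc} and Proposition~\ref{prop:w1-2-3-8}.

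For ``$K$ locally flat $\Rightarrow$ some $s_k(K)<D$'', let $m\in\{1,\dots,d\}$ be the largest integer for which $K$ is locally $m$-flat (so $m\ge1$) and aim at $s_k(K)<D$ for every $k<m$. Fix a cube $C$ with $C^\circ\cap K\ne\emptyset$ and $C\cap K$ similar to $[0,1]^m\times P$. Near such a cube-product piece, part of $\partial K_\eps$ consists of cylinders over a cross-section in the $d-m$ directions complementary to the $[0,1]^m$-factor; there exactly $m$ of the $d-1$ generalized principal curvatures vanish, so for every $k<m$ each term of the numerator in Z\"ahle's integral formula contains a vanishing factor and these cylindrical pieces carry no $\widetilde C_k$-mass. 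The crucial point is that, by self-similarity, a flat piece of this kind reappears (in some rotated position) inside every cylinder $S_\omega(K)$ and, once present, persists under all further subdivisions, since similarities preserve flatness in a coordinate direction. A careful accounting of the IFS combinatorics — using OSC to control overlaps and the position of $C$ relative to $O$ — should then produce a constant $c\in(0,1)$ such that, after $j$ steps, the portion of $\partial K_{r_{\min}^j}$ that can contribute to $\widetilde C_k$ has $\H^{d-1}$-measure at most $(1-c)^j$ times the generic order $r_{\min}^{\,j(k-D)}$. This gives $C_k^\var(K_\eps)\le\mathrm{const}\cdot\eps^{\,k-D+\delta}$ along $\eps=r_{\min}^j$ with $\delta:=\log(1-c)/\log r_{\min}>0$, hence $s_k(K)\le D-\delta<D$. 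A real complication is that $P$ need not be self-similar, so the flat-fraction estimate has to be phrased purely in terms of the faces of the $[0,1]^m$-factor; when $P$ does carry a self-similar structure it helps to reduce to the globally flat case via the higher-dimensional analogue of Proposition~\ref{subsets_of_R}.

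For the converse, ``$K$ \emph{not} locally flat $\Rightarrow s_k(K)\ge D$ for all $k$'', I would argue contrapositively, the main step being a local nondegeneracy statement: if $K$ is not locally flat, there are $x\in K$, $\rho>0$, $\delta_0>0$ and $\beta>0$ with $|C_k(K_\delta,B(x,\rho))|\ge\beta\,\delta^{\,k}$ for all $0<\delta<\delta_0$. Granting this, one pushes $B(x,\rho)$ forward by those $S_\omega$ with $r_\omega\sim\eps$ — which may be chosen pairwise disjoint and of cardinality $\gtrsim\eps^{-D}$ by OSC and $\dim_M K=D$ — and uses $k$-homogeneity to obtain $C_k^\var(K_\eps)\ge\sum_\omega|C_k(K_\eps,S_\omega B(x,\rho))|\gtrsim\eps^{-D}\cdot\beta\,\eps^{\,k}$, so that $\esslimsup{\eps\to0}\eps^{\,s-k}C_k^\var(K_\eps)=\infty$ for every $s<D$, i.e.\ $s_k(K)\ge D$. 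Under the extra hypotheses this upgrades to equality, since \eqref{eq:ic} forces $a_k(K)\le D$ and \eqref{eq:cbc} forces $s_k(K)\le D$, as recalled in the introduction. For $k=d$ and $k=d-1$ the nondegeneracy statement is elementary — it reduces to $\H^{d-1}(\partial K_\delta\cap B(x,\rho))\gtrsim\delta^{\,d-1}$, true whenever $K\cap B(x,\rho)$ fails to be $\rho$-dense — and for $d=2$ the case $k=0$ is exactly what the arguments of Section~\ref{sec:plane} provide.

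The hard part is the local nondegeneracy statement for $1\le k\le d-2$ when $d\ge3$: one must show that the \emph{only} mechanism by which all local $k$-th curvatures of the parallel sets near a point can vanish to order $\delta^{\,k}$ is the presence of a genuine flat cube-product piece $[0,1]^m\times P$ with $m>k$. This is a rigidity statement for the vanishing of intermediate curvature--direction densities, and it is precisely the point at which the present paper retreats to dimensions $d\le2$ (Corollary~\ref{cor:main-R}, Theorem~\ref{thm:main-R2}) and to full-dimensional sets (Theorem~\ref{fulldimension}); already for $d=3$, $k=1$ the coupling between the two nonzero principal curvatures of $\partial K_\delta$ and the direction component of $\widetilde C_1$ seems to demand new input, essentially a structure theory for self-similar sets whose parallel sets all have an everywhere-degenerate second fundamental form. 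I expect that, once such a rigidity result is available, the renewal machinery above carries the remainder of the argument through in every dimension.
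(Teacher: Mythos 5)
You are addressing Conjecture~\ref{conj}, which the paper itself leaves open: it is resolved only for $d=1$ (Corollary~\ref{cor:main-R}), for $d=2$ (Theorem~\ref{thm:main-R2}) and for full-dimensional sets (Theorem~\ref{fulldimension}). So there is no complete proof to compare against, and your sketch correctly identifies the architecture: the renewal scheme of Proposition~\ref{prop:w1-2-3-8} as the engine for lower bounds, Z\"ahle's integral formula and the vanishing of $m$ principal curvatures over a flat $[0,1]^m\times P$ piece as the mechanism for upper bounds, and a rigidity statement (``only flatness can kill the intermediate curvatures'') as the missing core of the converse. You are also honest that this rigidity statement for $1\le k\le d-2$, $d\ge3$ is exactly where the problem is open. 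Note, though, that for $d=2$ the paper's converse is not a pointwise nondegeneracy estimate at a single ball: it is a structural cascade --- $s_0(K)<\dim_M K$ forces every component of $K\cap O$ to be a line segment (Lemma~\ref{lineseg}), then all segments to be parallel, then a product structure (Proposition~\ref{locf2}) --- with each step obtained by feeding a concrete curvature lower bound into Proposition~\ref{prop:w1-2-3-8} and deriving a contradiction. Your proposed local nondegeneracy statement is a plausible abstraction of this, but it is not what the existing arguments actually prove.

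The step that would fail as written is in the forward direction. You propose to bound the $k$-th curvature variation by showing that the part of $\partial K_{\eps}$ which can carry $\gC_k$-mass has $\H^{d-1}$-measure at most $(1-c)^j$ times the generic order. But $\H^{d-1}$-measure of a subset of $\partial K_\eps$ does not control its curvature variation: by Z\"ahle's formula, $\gC_k^\var(K_\eps,\cdot)$ is dominated by the $\H^{d-1}$-measure of the corresponding portion of the \emph{normal bundle} $\nor K_\eps\subset\R^d\times S^{d-1}$, and over the non-smooth part of $\partial K_\eps$ (reentrant corners, edges where spheres of radius $\eps$ intersect --- precisely where the generalized principal curvatures are $+\infty$) the normal bundle can have large, even full-order, $\H^{d-1}$-measure sitting over a boundary set of arbitrarily small or zero $\H^{d-1}$-measure. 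Such singularities accumulate exactly on the non-flat part you are trying to discard, so the $(1-c)^j$ area saving proves nothing about $\gC_k^\var$. The paper's $d=2$ proof (Lemmas~\ref{distancelemma} and \ref{rectangles}, Proposition~\ref{prop:flatness_in R2}) circumvents this entirely: it shows that \emph{all} of $C_0^\var(K_\eps,\cdot)$ is supported on a compact set $T$ on which $K_\eps$ coincides with the parallel set of the self-similar subsystem $L$ obtained by deleting one map from a refined IFS, whence $C_0^\var(K_\eps)\le C_0^\var(L_\eps)$ exactly (using the convergence result of \cite{RSM}) and $s_0(K)\le\dim_M L<\dim_M K$. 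To repair your version you would need to control the normal-bundle measure (or the curvature variation itself) of the non-flat part, or reduce to such an exact comparison with a lower-dimensional self-similar set. A smaller but real point: your final estimate is only along the sequence $\eps=r_{\min}^j$, whereas $s_k$ is an essential-limsup exponent over all $\eps$, so the bound must be established on the whole intervals $[r_{\min}^{j+1},r_{\min}^{j}]$ with uniform constants.
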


In the sequel we will confirm this conjecture for sets in $\R$ and $\R^2$ and we will give some support of this conjecture in higher dimensions.

Our first goal is to get a better understanding of the notion of local flatness in connection with self-similarity.
The first statement shows that to decide the  local flatness of a self-similar set it is enough to look at a single point $x$ of the set (and at a fixed ball $B(x,\eps)$). Local flatness `at one point' (and at a fixed scale) implies local flatness everywhere in the set (and at all scales).
\begin{prop}\label{prop:one-point-flat}
Assuming OSC the following assertions are equivalent for a self-similar set $K\subset\R^d:$
\begin{enumerate}
\item[(i)] $K$ is locally $k$-flat.
\item[(ii)] There are $x\in K$, $\eps>0$ and a cube $C\subset B(x,\eps)$
such that $C^\circ\cap K$ is nonempty and $C\cap K$ is similar to $[0,1]^{k}\times P$ for some $P\subset\er^{d-k}$.
\end{enumerate}
\end{prop}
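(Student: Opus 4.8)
The plan is to prove the nontrivial implication (ii)$\Rightarrow$(i); the reverse implication is immediate from the definition of local $k$-flatness (just pick the one point guaranteed by (i)). So assume we are given $x\in K$, $\eps>0$ and a cube $C\subset B(x,\eps)$ with $C^\circ\cap K\neq\emptyset$ and a similarity $\psi$ with $\psi([0,1]^k\times P)=C\cap K$ for some $P\subset\er^{d-k}$. The goal is to produce, for an \emph{arbitrary} $y\in K$ and $\delta>0$, a cube $C'\subset B(y,\delta)$ with $(C')^\circ\cap K\neq\emptyset$ and $C'\cap K$ similar to $[0,1]^k\times P'$ for some $P'$.

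The key mechanism is the self-similar structure together with the local determinacy of the set $K$ under the contractions $S_\omega$. First I would observe that, since $C^\circ\cap K\neq\emptyset$, we can choose a point $z\in C^\circ\cap K$ and, because the cylinder sets $S_\omega(K)$ shrink to points and any point of $K$ is approximated by them, there is a word $\omega\in\Sigma^*_N$ with $z\in S_\omega(K)\subset C^\circ$; thus $S_\omega(K)=S_\omega(K)\cap C=$ the set $C\cap K$ restricted to the subcube $S_\omega^{-1}\!(\cdot)$ — more precisely $S_\omega(K)\subset C\cap K=\psi([0,1]^k\times P)$, so $S_\omega(K)$ is a subset of a product set of the required type lying in the interior of $C$. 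Pulling back by $\psi^{-1}$, the set $\psi^{-1}(S_\omega(K))$ is contained in $[0,1]^k\times P$ and, being the image of all of $K$ under a similarity, it is a scaled/rotated copy of $K$ sitting inside a product cylinder. Here one must use that $\psi^{-1}\circ S_\omega$ maps $K$ into $[0,1]^k\times P$ and that this forces the copy of $K$ to split as a product: projecting onto the first $k$ coordinates must hit a full subcube (since $S_\omega(K)$ has nonempty interior relative to the $[0,1]^k$-factor — this is where $z\in C^\circ$ and the OSC are used to guarantee the copy is "thick enough" in the flat directions). This is the step I expect to be the main obstacle: carefully arguing that a similar copy of $K$ contained in $[0,1]^k\times P$ and meeting the interior must itself be of the form (cube)$\times$(something), i.e. that local flatness is inherited by cylinder pieces. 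One clean way is to note that $K$ itself must then contain a subset similar to $[0,1]^k\times P''$ with nonempty interior in a small cube, namely $S_\omega^{-1}(C''\cap K)$ for a suitable subcube $C''$ of $C$ adapted to the product structure of $\psi$.

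Once we know $K$ contains, inside some small cube $C_0$ with $C_0^\circ\cap K\neq\emptyset$, a piece similar to $[0,1]^k\times P_0$, I would transport this to every point $y\in K$ and every scale $\delta>0$ as follows. Pick a word $\tau\in\Sigma^*_N$ with $y\in S_\tau(K)$ and $\diam S_\tau(K)<\delta/3$ (possible since $r_\tau\to 0$ along any infinite word coding $y$). Then $S_\tau(C_0\cap K)=S_\tau(C_0)\cap S_\tau(K)$, but $S_\tau(K)\subseteq K$ and by local determinacy $S_\tau(C_0)\cap K\supseteq S_\tau(C_0)\cap S_\tau(K)=S_\tau(C_0\cap K)$; shrinking $C_0$ slightly to a subcube $C_1$ with $S_\tau(C_1)\cap K = S_\tau(C_1\cap K)$ (using that $S_\tau(K)$ occupies a whole cell and the OSC keeps the other cells away on the relevant scale), we get that $C':=S_\tau(C_1)$ is a cube inside $B(y,\delta)$ with $(C')^\circ\cap K\neq\emptyset$ and $C'\cap K=S_\tau(C_1\cap K)$ similar to $[0,1]^k\times P_1$. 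That establishes local $k$-flatness at the arbitrary point $y$, completing (ii)$\Rightarrow$(i).

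Two technical points deserve care in the write-up. First, throughout one uses that $S_\tau$ is a similarity, so it maps cubes to cubes and products $[0,1]^k\times P$ to similar products; this is straightforward but should be stated. Second, the passages "shrink $C$ to a subcube so that $K$ inside it equals a single cylinder's contribution" require the open set condition (via SOSC, which holds by Schief's theorem quoted in Section~\ref{sec:pre}) to ensure that near a point of $K^\circ$-in-the-relative-sense the other pieces $S_j(K)$ do not interfere; I would phrase this using a feasible open set $O$ with $O\cap K\neq\emptyset$ and the fact that the cylinders $S_\omega(O)$ are pairwise disjoint. If one prefers to avoid delicate interference arguments, an alternative is to work directly with the statement "$K$ locally $k$-flat $\iff$ some cylinder $S_\omega(K)$ is locally $k$-flat $\iff$ $K$ is," using that $S_\omega(K)$ is itself similar to $K$, which reduces everything to the single-scale, single-point hypothesis in (ii) by self-similarity — this is the cleanest route and the one I would ultimately take.
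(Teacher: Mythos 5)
Your overall strategy is the paper's: keep a ``flat'' subcube inside a feasible open set and transport it by the maps $S_\tau$ to every point and scale, using the OSC to guarantee $S_\tau(C_1)\cap K=S_\tau(C_1\cap K)$. However, two things need fixing. First, the step you single out as ``the main obstacle'' --- arguing that the similar copy $\psi^{-1}(S_\omega(K))$ of $K$ sitting inside $[0,1]^k\times P$ must itself split as a product --- is a red herring: it is not needed anywhere (hypothesis (ii) already hands you a product piece, and all you ever need is that $C'\cap K$ is a product for a suitable \emph{subcube} $C'$ of $C$, which is automatic for subcubes aligned with the product structure, since a box intersected with $[0,1]^k\times P$ is again (cube)$\times$(subset of $P$)), and the claim is false in general: a subset of a product set, even a scaled copy of $K$ meeting $C^\circ$, need not be of the form (cube)$\times$(something). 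Since your second paragraph then restarts from exactly the hypothesis (ii), the first paragraph can simply be deleted.

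Second, and this is the genuine gap: the localization step is asserted but not carried out. You need a subcube $C_1\subset C\cap O$ (with $O$ a feasible open set, fixed \emph{before} choosing $\tau$) satisfying \emph{both} $C_1\subset O$ (so that $S_\tau(C_1)\subset S_\tau(O)$ and the disjointness of the cylinders $S_\sigma(O)$ gives $K\cap S_\tau(C_1)\subset S_\tau(K)$) \emph{and} $(C_1)^\circ\cap K\neq\emptyset$ (otherwise the transported cube violates the definition of local flatness). Shrinking $C$ to fit inside $O$ can a priori destroy the second property, e.g.\ if $C\cap K$ happens to lie in $\partial O$. The paper's device, which your write-up should include, is this: by Schief's theorem take a \emph{strong} feasible open set $O$ and a point $z\in K\cap O$; every iterate $S_\sigma(z)$ lies in $S_\sigma(K\cap O)\subset K\cap O$, and these iterates are dense in $K$, so since $C^\circ$ is open and meets $K$ some iterate $z'$ lies in $C^\circ\cap O\cap K$; now a small cube $C_1$ around $z'$, contained in $C\cap O$ and aligned with the product structure of $C\cap K$, has $(C_1)^\circ\cap K\ni z'$ and $C_1\cap K$ similar to $[0,1]^k\times P'$. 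With $C_1$ fixed this way, your transport argument goes through (choose $r_\tau$ small enough that $r_\tau(\diam K+\diam C_1)<\delta$, not merely $\diam S_\tau(K)<\delta/3$). The ``cleanest route'' you sketch at the end does not substitute for this, because passing between local flatness of $K$ and of a cylinder $S_\omega(K)$ runs into exactly the same interference problem between overlapping cylinders that the feasible open set is there to control.
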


\begin{proof}
The implication $(i)\Rightarrow (ii)$ is trivial.
To prove the reverse implication, let $\{\varphi_1,\ldots,\varphi_N\}$ be similarities generating $K$, let $O$ be a strong feasible open set for $K$ (i.e., a feasible open set such that $K\cap O \neq\emptyset$) and let $z$ be a point in $K\cap O$. Since the iterates $\varphi_\sigma(z)$, $\sigma\in\Sigma_N^*$ of $z$ are dense in $K$ and since they are all contained in $K\cap O$, we can find some iterate $z'$ of $z$ contained in $C\cap O$ and thus some cube $C'\subset C\cap O$ (containing $z'$) such that
$(C')^\circ\cap K$ is again nonempty and $C'\cap K$ is similar to $[0,1]^{k}\times P'$ for some set $P'\subset\R^{d-k}$.
Now let $y\in K$ and $\delta>0$ be arbitrary. Then there is some iterate $\varphi_\omega(C')$ with $\omega\in\Sigma_N^*$ of $C'$ contained in $B(y,\delta)$. Clearly, $\varphi_\omega(C')$ is a closed cube. Moreover, $C'\subset O$ implies $\varphi_\omega(C')\cap K=\varphi_\omega(C')\cap\varphi_\omega(K)=\varphi_\omega(C'\cap K)$. But the latter set is obviously similar to $C'\cap K$ and thus to $[0,1]^{d-k}\times P'$. This shows the local $k$-flatness of $K$.
\end{proof}

The following simple observation indicates that a locally flat self-similar set satisfying OSC is \emph{almost} globally flat in the sense that it is contained in a nontrivial product set.

\begin{prop}\label{globalflat}
Let $K\subset\R^d$ be a self-similar set satisfying OSC which is locally $k$-flat.
Let $C$ be any cube as guaranteed by the local $k$-flatness of $K.$
Then there is a similarity $\psi$ such that $K\subset\psi(C\cap K)$, that is, $K$ is contained in a $k$-flat set.
\end{prop}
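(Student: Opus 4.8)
The plan is to exploit self-similarity in the same spirit as the proof of Proposition~\ref{prop:one-point-flat}: first I would move the cube $C$ (more precisely the flat piece $C\cap K$) \emph{inside} a strong feasible open set, and then enlarge it by an inverse iterate of the IFS until it swallows all of $K$. Concretely, let $\{\varphi_1,\ldots,\varphi_N\}$ generate $K$, let $O$ be a strong feasible open set (which exists by Schief's theorem), and fix $z\in K\cap O$. As in the previous proof, since the iterates $\varphi_\sigma(z)$, $\sigma\in\Sigma_N^*$, are dense in $K$ and all lie in $K\cap O$, one of them lies in the open set $C^\circ$; hence there is a word $\tau\in\Sigma_N^*$ and a small closed subcube $C'\subset C^\circ\cap O$ with $(C')^\circ\cap K\ni\varphi_\tau(z)$, so that $(C')^\circ\cap K\neq\emptyset$ and, being a subcube of $C$ meeting $K$, $C'\cap K$ is again similar to $[0,1]^k\times P'$ for some $P'\subset\R^{d-k}$. (This uses that $C\cap K$ similar to $[0,1]^k\times P$ forces any subcube meeting $K^\circ\cap C$ to cut out a product piece of the same shape; this is the one slightly technical point, handled by looking at the product structure on the factor containing no cube.)

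Next I would find a word $\omega\in\Sigma_N^*$ such that $K\subset \varphi_\omega^{-1}(C')$, equivalently $\varphi_\omega(K)\subset C'$. This is possible because the compact set $K$ has finite diameter, $C'$ contains a ball of some fixed radius $\rho>0$ around the point $\varphi_\tau(z)\in K$, and the contraction ratios $r_\omega=r_{\omega_1}\cdots r_{\omega_n}$ can be made arbitrarily small while keeping $\varphi_\omega(K)$ anchored near a point of $K$: choosing $\omega$ with $\varphi_\tau(z)=\varphi_{\tau'}(z)$ for a suitable prefix, or more simply taking $\omega=\tau$ and then further iterating, one gets $\varphi_{\tau\sigma}(K)\subset B(\varphi_{\tau\sigma}(z),r_{\tau\sigma}\diam K)$, and for $|\sigma|$ large this ball lies in $C'$. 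So set $\psi:=\varphi_\omega^{-1}$ (a similarity) for such an $\omega$; then $\varphi_\omega(K)\subset C'$, and since $\varphi_\omega(K)\subset \varphi_\omega(O)\subset O$ as well (because $C'\subset O$ and $O$ is feasible, so $\varphi_\omega(O)\subset O$), we get $\varphi_\omega(K)\subset C'\cap O\subset C'\cap K$? — this last inclusion is where OSC enters: one has $\varphi_\omega(K)\cap C' = \varphi_\omega(K)\cap C' \cap \varphi_\omega(O)$ modulo the overlap set, and using $\varphi_\omega(C'\cap K)=\varphi_\omega(C')\cap K$ type identities (valid because $C'\subset O$, exactly as in the previous proof) one concludes $\varphi_\omega(K)\subset C'\cap K$. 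Applying $\psi=\varphi_\omega^{-1}$ gives $K\subset\psi(C'\cap K)\subset\psi(C\cap K)$ up to composing with the inclusion-induced similarity, i.e. $K$ is contained in a set similar to $C\cap K$, which is $k$-flat. Renaming the composite similarity as $\psi$ yields the statement.

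The main obstacle I expect is the bookkeeping around OSC in the last step: one must be careful that $\varphi_\omega(K)$ does not stick out of the flat piece $C'\cap K$ through parts of $K$ lying outside $\varphi_\omega(O)$. The clean way around this is to insist from the outset (when picking $C'$) that $C'\subset O$, so that for every word $\omega$ the identity $\varphi_\omega(C')\cap K=\varphi_\omega(C'\cap K)$ holds — this is precisely the identity used in Proposition~\ref{prop:one-point-flat} — and then $\varphi_\omega(K)\subset C'$ together with $\varphi_\omega(K)\subset K$ immediately gives $\varphi_\omega(K)\subset C'\cap K$ without any further appeal to the overlaps. With that ordering of the argument the proof is short; the only genuinely new ingredient beyond Proposition~\ref{prop:one-point-flat} is the observation that $K$ can be shrunk by a deep iterate to fit inside a fixed ball contained in $C'$, which is just compactness of $K$ plus $r_{\min}<1$.
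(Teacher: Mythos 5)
Your proposal is correct and follows essentially the same route as the paper: pick a point of $K$ in the interior of the cube (after possibly shrinking to a subcube inside a strong feasible open set), use a sufficiently deep iterate $\varphi_\omega$ anchored at that point so that $\varphi_\omega(K)\subset C'$, and take $\psi=\varphi_\omega^{-1}$. As you note yourself at the end, the OSC bookkeeping and the product-structure claim for the subcube $C'$ are superfluous, since $\varphi_\omega(K)\subset C'$ together with $\varphi_\omega(K)\subset K$ already gives $\varphi_\omega(K)\subset C'\cap K\subset C\cap K$ -- which is exactly how the paper argues.
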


\begin{proof}
We fix a strong feasible open set $O$ for $K$ and assume without loss of generality that $C\subset O$. (If this is not satisfied, one can work with a subcube $C'\subset C$ such that $C'\subset O$, as in the proof of Proposition~\ref{prop:one-point-flat}. If there is a $\psi$ such that $K\subset\psi(C'\cap K)$, then obviously the same $\psi$ also works for $C$.)

Choose $x\in C^{\circ}\cap K$ and put $D:=\dist(x,C^{c}).$
Let $\varphi_{1},...,\varphi_{N}$ be similarities generating $K$ with ratios $r_{1},...,r_{N}.$
If we put $r_{\max}=\max_{i=1,...,N} r_{i}<1$ we can find $m\in\N$ such that $r_{\max}^{m}\diam K<D.$
Let $\omega\in \Sigma_N^{m}$ such that $x\in\varphi_{\omega}(K)$, then $\varphi_{\omega}(K)\subset C^{\circ}.$
Hence for the similarity $\psi:=\varphi_{\omega}^{-1}$ the desired implication is satisfied.
\end{proof}

Note that the Proposition above only shows that a locally $k$-flat self-similar set is \emph{contained} in a (globally) $k$-flat set. It is not necessarily (globally) $k$-flat itself, as the following example illustrates:

\begin{ex} \label{ex:locallyflat_but_not_globally} \em
We start with the same mappings $\varphi_{k,l}:\R^2 \to \R^2$ as in Example~\ref{ex:1}.
Let $t_1,...,t_n\in\er$ and define for $k,l=1,...,n$
\begin{equation*}
\hat\varphi_{k,l}(x,y)=\varphi_{k,l}(x,y+t_k).
\end{equation*}
Fix $0<m<n$ and let $\hat K^{n,m}$ and $\hat F^{n,m}$ be the self-similar sets generated by the mappings $\hat\varphi_{k,l},$ $k=1,...,m$, $l=1$ and $\hat\varphi_{k,l},$ $k=1,...,m$, $l=1,...,n$, respectively; see~Figure~\ref{fig:ex1} (right) for an illustration.
Then, similarly as in Example~\ref{ex:1}, one has the relation $\hat F^{n,m}=\hat K^{n,m}\oplus(\{0\}\times[0,1])$ (where $\oplus$ means Minkowski addition) and therefore $\hat F^{n,m}$ is globally $1$-flat if and only if $t_1=t_2=\dots=t_m.$
Since $\hat K^{n,m}$ lies on the graph of some continuous function $f:[0,1]\to\R$ with variable $x$, $\hat F^{n,m}$ is locally $1$-flat. (This is seen as follows: Fix some point $x\in \hat K^{n,m}$. By the continuity of $f$ at $x$ there is some $\delta>0$ such that $|f(x)-f(y)|\leq 1/3 $ for every $y\in [x-\delta,x+\delta]$. Then the set $\hat K^{n,m}\oplus(\{0\}\times[0,1])\cap
([x-\delta,x+\delta]\times[f(x)+1/3,f(x)+2/3])$ is (globally) 1-flat.
Therefore we can choose a subsquare $C$ of $[x-\delta,x+\delta]\times[f(x)+1/3,f(x)+2/3]$ such that condition (ii) of Proposition~\ref{prop:one-point-flat} holds for the set
$\hat K^{n,m}\oplus(\{0\}\times[0,1])=\hat F^{n,m}$. Since $\hat F^{n,m}$
is self-similar, its local flatness follows from Proposition~\ref{prop:one-point-flat}.)
\end{ex}

The property of locally $k$-flat sets of being contained in a (globally) $k$-flat set implies that the flat cubes $C$ in the definition of local flatness are all aligned. We make this observation precise only in the case $d=2$, since we need it only for this case later on and since in higher dimensions the corresponding statement would be more technical.

\begin{cor}\label{flatremark}
Let $K\subset\R^2$ be a self-similar set satisfying OSC which is locally $1$-flat but not locally $2$-flat.
Let $C_{1}$ and $C_{2}$ be two different cubes obtained from the local flatness and let $L_{1}$ and $L_{2}$ be non-degenerate line segments in $C_{1}\cap K$ and $C_{2}\cap K,$ respectively.
Then $L_{1}$ and $L_{2}$ are parallel.
\end{cor}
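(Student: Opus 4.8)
The plan is to carry the whole of $K$ into a single product set of the form $[0,1]\times P$ and to read off there that every non-degenerate segment of $K$ points in the same direction.

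First I would unpack local $1$-flatness at the cube $C_1$: there is a set $P_1\subset\R$ and a similarity $\sigma_1$ with $\sigma_1([0,1]\times P_1)=C_1\cap K$. The key point will be to show that $P_1$ contains no non-degenerate interval, and this is where the hypothesis that $K$ is not locally $2$-flat enters. Suppose $[c,d]\subset P_1$ with $c<d$. Then $R:=\sigma_1([0,1]\times[c,d])$ is a (possibly rotated) closed rectangle with nonempty interior, and $R\subset\sigma_1([0,1]\times P_1)=C_1\cap K\subset K$; choosing a small axis-parallel closed cube $C'$ inside $R^\circ$ gives $C'\cap K=C'$, which is similar to $[0,1]^2$, together with $(C')^\circ\cap K=(C')^\circ\neq\emptyset$. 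By Proposition~\ref{prop:one-point-flat} this would force $K$ to be locally $2$-flat, a contradiction. Hence no connected subset of $P_1$ is non-degenerate, so any non-degenerate line segment contained in $[0,1]\times P_1$ has singleton image under the second-coordinate projection and is therefore parallel to $e_1:=(1,0)$.

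Next I would globalize using Proposition~\ref{globalflat}: there is a similarity $\psi_1$ with $K\subset\psi_1(C_1\cap K)$, so setting $h_1:=\psi_1\circ\sigma_1$ (a similarity of $\R^2$) we get $K\subset h_1([0,1]\times P_1)$. Since $L_1\subset C_1\cap K$ and $L_2\subset C_2\cap K$ are non-degenerate segments, both lie in $K\subset h_1([0,1]\times P_1)$; applying $h_1^{-1}$, the segments $h_1^{-1}(L_1)$ and $h_1^{-1}(L_2)$ lie in $[0,1]\times P_1$ and are therefore parallel to $e_1$ by the previous step. Pushing forward by the linear part of $h_1$, both $L_1$ and $L_2$ are parallel to $h_1(e_1)-h_1(0)$, hence $L_1\parallel L_2$.

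I expect the main obstacle to be the interval-exclusion argument in the first step: one must produce a genuine axis-parallel cube inside the rotated rectangle $R$ and check carefully that it satisfies hypothesis (ii) of Proposition~\ref{prop:one-point-flat}. Everything after that is routine manipulation of similarities. It is perhaps worth stressing that $C_2$ and the internal structure of $C_2\cap K$ play no independent role: once $K$ sits inside the single flat model $h_1([0,1]\times P_1)$, every non-degenerate segment of $K$ is forced into the direction $h_1(e_1)-h_1(0)$, no matter which of the flat cubes it came from.
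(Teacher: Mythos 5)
Your proof is correct and follows essentially the same route as the paper's: embed all of $K$ into $\psi(C_1\cap K)\cong[0,1]\times P_1$ via Proposition~\ref{globalflat} and observe that a non-horizontal segment there would force $P_1$ to contain an interval, making $K$ locally $2$-flat via Proposition~\ref{prop:one-point-flat}. The only difference is that you spell out the interval-exclusion step, which the paper compresses into ``otherwise $K$ would be $2$-flat.''
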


\begin{proof}
The local $1$-flatness means that $C_{i}\cap K$ is a union of translates of $L_{i}$, for $i=1,2.$
Now, by Proposition~\ref{globalflat}, there is a similarity $\psi$ such that $K\subset \psi(C_{1}\cap K).$
This means that $C_{1}\cap K, C_{2}\cap K\subset \psi(C_{1}\cap K)$
and so in particular $L_{1}, L_{2}\subset \psi(C_{1}\cap K)$. Hence $L_1$ and $L_2$ must be parallel, because otherwise $K$ would be $2$-flat.
\end{proof}

We add another simple observation concerning the relation between local flatness of different orders:

\begin{lem}
Let $K\subset\er^{d}$ be a locally $k$-flat self-similar set and suppose that $C$ is a cube as in the definition of the local $k$-flatness with corresponding set $P$.
If $P$ is locally $1$-flat, then $K$ is locally $(k+1)$-flat.
\end{lem}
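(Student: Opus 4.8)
The plan is to build the required $(k+1)$-flat cube for $K$ directly out of the given $k$-flat cube $C$ for $K$ together with a $1$-flat cube for $P$, using Proposition~\ref{prop:one-point-flat} to reduce the verification to a single point and a single scale. We are given a cube $C\subset\er^d$, nonempty $C^\circ\cap K$, and a similarity $g$ with $g(C\cap K)=[0,1]^k\times P$ for some $P\subset\er^{d-k}$. The hypothesis that $P$ is locally $1$-flat supplies (after applying the definition of local flatness to $P$, picking any point of $P$ in the interior of the relevant cube) a closed cube $D\subset\er^{d-k}$ with $D^\circ\cap P\neq\emptyset$ and $D\cap P$ similar to $[0,1]\times P'$ for some $P'\subset\er^{d-k-1}$.

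First I would form the box $B:=[0,1]^k\times D\subset\er^d$ and observe that $B\cap([0,1]^k\times P)=[0,1]^k\times(D\cap P)$, which is similar to $[0,1]^k\times[0,1]\times P'=[0,1]^{k+1}\times P'$, and that the interior $B^\circ=(0,1)^k\times D^\circ$ meets $[0,1]^k\times P$ since $D^\circ\cap P\neq\emptyset$. Pulling everything back by $g^{-1}$, the set $g^{-1}(B)\cap(C\cap K)$ is similar to $[0,1]^{k+1}\times P'$ and its interior (relative to $\er^d$) meets $K$. The one genuinely non-routine point is that $g^{-1}(B)$ is in general only a rectangular box, not a cube, and the definition of local flatness demands a genuine cube. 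To deal with this I would choose a small closed cube $C'\subset g^{-1}(B^\circ\cap([0,1]^k\times\er^{d-k}))^{\circ}$, more precisely a cube $C'$ contained in $g^{-1}(B)\cap C$ whose interior still meets $K$; this is possible because $g^{-1}(B)$ has nonempty interior meeting $K$ and $K\cap C^\circ$ is dense-in-itself along the flat directions, or more simply because $g^{-1}([0,1]^k\times(D^\circ\cap P))$ is a relatively open (in $C\cap K$) nonempty piece of the product set $[0,1]^{k+1}\times P'$ and hence contains a translate of a small cube's worth of that product. Then $C'\cap K=C'\cap(C\cap K)$ is similar to a subset of $[0,1]^{k+1}\times P'$ which, being of the form (cube of dimension $k{+}1$) $\times$ (copy of $P'$ restricted to a box), is again similar to $[0,1]^{k+1}\times P''$ for a suitable $P''\subset\er^{d-k-1}$.

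This exhibits $x\in C'^\circ\cap K$, a cube $C'\subset B(x,\eps)$ for suitable $\eps$, with $C'\cap K$ similar to $[0,1]^{k+1}\times P''$, i.e.\ condition (ii) of Proposition~\ref{prop:one-point-flat} with $k$ replaced by $k+1$. Since $K$ is self-similar and satisfies OSC, Proposition~\ref{prop:one-point-flat} upgrades this one-point, one-scale statement to full local $(k+1)$-flatness of $K$, which is the claim. The main obstacle, as indicated, is purely bookkeeping: arranging that the intersection of $C$ with a box adapted to $D$ can be trimmed to an honest cube without destroying either the product structure of $C\cap K$ or the non-triviality (interior meeting $K$); everything else is a direct manipulation of product sets and similarities.
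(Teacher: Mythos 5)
Your proposal is correct and follows essentially the same route as the paper: reduce to exhibiting a single flat cube via Proposition~\ref{prop:one-point-flat}, normalize so that $C\cap K=[0,1]^k\times P$, apply the local $1$-flatness of $P$ to obtain a cube $D\subset\er^{d-k}$ with $D\cap P$ similar to $[0,1]\times P'$, and then extract a genuine $(k+1)$-flat cube from the product box $[0,1]^k\times D$. Your explicit treatment of the trimming step (passing from the rectangular box to an axis-aligned subcube whose intersection with the product set is again of product form) is the only point the paper glosses over, and you handle it correctly.
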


\begin{proof}
Suppose that $P$ is $1$-flat.
Choose $x\in C\cap K\cap O$ where $O$ is some feasible open set for $K.$
By Proposition~\ref{prop:one-point-flat}, it is sufficient to find $\eps>0$ and a cube $C'\subset B(x,\eps)$
such that $(C')^\circ\cap K$ is nonempty and $C'\cap K$ is similar to $[0,1]^{k+1}\times Q$ for some $Q\subset\er^{d-k-1}$.

Without loss of generality, we can assume that $C\cap K=[0,1]^{k}\times P$. Let $\pi$ be the orthogonal projection onto the last $d-k$ coordinates.
Then $x_0:=\pi(x)\in P$ and, since $P$ is locally $1$-flat, we can find $\eps>0$ and a cube $D\subset B(x_0,\eps)\subset\R^{d-k}$ such that $x_0\in D^{\circ}$ and  $D\cap P$ is similar to $[0,1]\times Q$
for some $Q\subset\er^{d-k-1}$.
Now, the desired cube $C'$ can be found around any point in $\er^{k}\times D\cap B(x,\eps)\cap K.$
\end{proof}

The following statement resolves the situation for full-dimensional sets.
\begin{thm}\label{fulldimension}
Let $K$ be a self-similar set in $\er^{d}$ satisfying OSC. Then the following assertions are equivalent:
\begin{enumerate}
  \item[(i)] $\dim_M K =d$,
  \item[(ii)] $s_{d-1}(K)<\dim_M K$,
  \item[(iii)] $K$ is locally $d$-flat.
\end{enumerate}
\end{thm}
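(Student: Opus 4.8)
The plan is to prove the cycle $(iii)\Rightarrow(i)\Rightarrow(ii)\Rightarrow(iii)$, since each individual implication is tractable with tools already available in the excerpt. The implication $(iii)\Rightarrow(i)$ is the easiest: if $K$ is locally $d$-flat, then some cube $C$ satisfies $C\cap K$ is similar to $[0,1]^d$, hence $C\cap K$ has nonempty interior; but $C^\circ\cap K\neq\emptyset$ together with $K$ being self-similar under OSC forces $K$ to have nonempty interior everywhere the iterates of that cube land, and in particular $\dim_M K=d$. (Alternatively, and more directly, Proposition~\ref{globalflat} gives a similarity $\psi$ with $K\subset\psi(C\cap K)$, which contains a nondegenerate cube, but this only bounds the dimension from below; combined with $\dim_M K\le d$ trivially, we get $\dim_M K=d$.)

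\textbf{The implication $(i)\Rightarrow(ii)$.} Here I would use the fact, recalled in the introduction, that $s_d(K)=\dim_M K$ always holds for self-similar sets satisfying OSC, and that $s_{d-1}(K)=\dim_S K$ is the S-dimension. When $\dim_M K=d$, the set $K$ has nonempty interior, so a positive fraction of $K_\eps$ (namely roughly $K^\circ$ itself, up to an $\eps$-collar) contributes nothing to the surface area $C_{d-1}(K_\eps,\cdot)=\tfrac12\Ha^{d-1}(\bd K_\eps\cap\cdot)$. The boundary $\bd K_\eps$ is governed by the boundary $\bd K$, which for a full-dimensional self-similar set is itself (a finite union of pieces of) a self-similar set of dimension strictly less than $d$ --- this is exactly the phenomenon displayed in Example~\ref{ex:2}. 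Consequently $C_{d-1}(K_\eps)=\tfrac12\Ha^{d-1}(\bd K_\eps)$ grows like $\eps^{d-1-D'}$ where $D'=\dim_M\bd K<d=\dim_M K$, giving $s_{d-1}(K)=D'<\dim_M K$. The point requiring care is justifying that $\dim_M\bd K<d$: I would argue that full-dimensionality plus OSC means some feasible open set meets $K^\circ$, so $K$ contains an open cube up to rescaling, and a Baire-category / self-similarity argument shows $\lambda_d(\bd K)=0$ with a genuine dimension drop. (One can also invoke \cite{rw09} as in Example~\ref{ex:1}, or known results on S-dimension.)

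\textbf{The implication $(ii)\Rightarrow(iii)$} will be the main obstacle. Here we know $s_{d-1}(K)<\dim_M K$; since $s_{d-1}(K)=\dim_S K$ and $s_d(K)=\dim_M K$, this says the S-dimension is strictly smaller than the Minkowski dimension. We must deduce local $d$-flatness, i.e., that $K$ contains a genuine $d$-dimensional cube at some scale. The strategy is contrapositive: assume $K$ is \emph{not} locally $d$-flat and derive $s_{d-1}(K)\ge\dim_M K$. First, $\dim_M K<d$ would already be impossible since then generic behaviour forces $s_{d-1}(K)=\dim_S K=\dim_M K$ (no full-dimensional pieces, so the surface area of $K_\eps$ scales like its volume); this handles the low-dimensional case and shows we may assume $\dim_M K=d$. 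When $\dim_M K=d$ but $K$ is not locally $d$-flat, we must show the boundary $\bd K$ is "as large as possible", i.e., $\dim_M\bd K=d$, so that $C_{d-1}(K_\eps)\sim\eps^{-1}\cdot\eps^{d-\dim_M\bd K}=\eps^{d-1-d}$ and thus $s_{d-1}(K)=d=\dim_M K$. The heart of the matter is a geometric dichotomy: for a full-dimensional self-similar set with OSC, either some cylinder $\varphi_\omega(K)$ fills a cube (forcing local $d$-flatness via Proposition~\ref{prop:one-point-flat}), or the boundary $\bd K$ has full Minkowski dimension $d$. I would prove this by examining the feasible open set $O$: the "holes" $O\setminus K$ accumulate near $K$ at all scales unless some cylinder is a solid cube, and their boundaries contribute to $\bd K_\eps$ enough to push $\Ha^{d-1}(\bd K_\eps)$ up to order $\eps^{1-d}$. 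Making this last estimate rigorous --- controlling $\Ha^{d-1}(\bd K_\eps)$ from below by the abundance of boundary pieces at scale $\eps$ --- is where the real work lies, and one should likely invoke the self-similar structure (each $\varphi_\omega(\bd K)$ is a scaled copy) together with a counting argument for the number of $\eps$-cylinders whose images touch $\bd O$. I expect this step to require the most careful argument and possibly an auxiliary lemma about self-similar sets that are full-dimensional but not locally flat having boundary of full dimension.
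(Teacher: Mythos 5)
Your overall architecture is workable, but there are two substantive problems. The first is a genuine gap in (i)$\Rightarrow$(ii), at exactly the point you flag as "requiring care": you propose to establish $\udim_M \bd K<d$ by showing $\lambda_d(\bd K)=0$ via a Baire-category/self-similarity argument "with a genuine dimension drop". But Lebesgue-null sets can have full upper Minkowski dimension, so $\lambda_d(\bd K)=0$ does not produce the dimension drop, and the drop itself is the entire content of the step. The paper's argument supplies the missing idea: pick $x\in K^\circ$, pass to a high iterate of the IFS so that the cylinder containing $x$ lies in $K^\circ$, and observe that then $\bd K=\bd(K^\circ)$ is covered by the images of $\bd K$ under the \emph{remaining} maps $\psi_1,\dots,\psi_{k-1}$; iterating, $\bd K$ is contained in the attractor $L$ of $\{\psi_1,\dots,\psi_{k-1}\}$, and since $\sum_{i=1}^{k-1}r_i^d<1$ one gets $\dim_M L<d$. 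Without some such containment in a strictly smaller self-similar set, your step does not close. (A smaller imprecision in the same step: you assert $C_{d-1}(K_\eps)$ \emph{grows like} $\eps^{d-1-D'}$ with $D'=\udim_M\bd K$, i.e.\ $s_{d-1}(K)=D'$; only the inequality $s_{d-1}(K)\le \udim_S\bd K=\udim_M\bd K$ is needed, and it follows from $\bd(K_\eps)\subseteq\bd((\bd K)_\eps)$ together with \cite[Corollary~3.6]{rw09} — the exact asymptotics you claim are neither needed nor obviously true.)

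The second problem is that your (ii)$\Rightarrow$(iii) programme is aimed at a vacuous case and would be wasted (and risky) effort. By Schief's theorem \cite[Cor.~2.3]{S}, a self-similar set with OSC and $\dim_M K=d$ has nonempty interior, and interior points are dense in $K$; an interior point immediately yields a cube $C$ with $C\cap K=C$, so full dimension already implies local $d$-flatness. Hence the case "$\dim_M K=d$ but not locally $d$-flat", for which you plan the "heart of the matter" dichotomy and a hard lower bound on $\Ha^{d-1}(\bd K_\eps)$, never occurs. The efficient route, which you essentially already have in hand, is: (ii)$\Rightarrow$(i) by contraposition via $\dim_M K<d\Rightarrow s_{d-1}(K)=\udim_S K=\udim_M K$ (\cite[Corollary~3.6]{rw09}), and then (i)$\Rightarrow$(iii) by Schief. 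Restructure accordingly and supply the containment argument for $\bd K$ described above, and the proof is complete.
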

\begin{proof} Since a self-similar set $K\subset\R^d$ of dimension $d$ has interior points (see \cite[Cor. 2.3]{S}) and since the interior points are dense in $K$, it is locally $d$-flat.
On the other hand, any locally $d$-flat set contains an open set and has thus obviously dimension $d$. This proves (i)$\Leftrightarrow$(iii). The implication (ii)$\Rightarrow$(i) follows by contraposition from the fact that $\dim_M K<d$ implies $s_{d-1}(K)=\overline{\dim}_S K=\overline{\dim}_M K$ for any bounded set $K\subset\R^d$, cf. \cite[Corollary 3.6]{rw09} or \cite[Theorem 1.1]{rw11}.

It remains to prove the implication (i)$\Rightarrow$(ii), which will follow at once if we show that $\udim_M \partial K<d.$ Indeed, if this strict inequality holds, then, by \cite[Corollary 3.6]{rw09}, we have
$$
     d>\udim_M \bd K=\udim_S \bd K\geq \udim_S K=s_{d-1}(K),
$$
where the second inequality is due to the set inclusion
$
\bd (K_r)=\bd((\bd K)_r)\cap K^c\subseteq \bd((\bd K)_r),
$
which holds for each $r>0$.

For a proof of the inequality $\udim_M \partial K<d$, observe that $\dim_M K=d$, implies the interior of $K$ is nonempty and we can choose $x\in K^{\circ}.$
Set $D=\dist(x,\partial K)>0.$
Let $\varphi_{1},...,\varphi_{N}$ be similarities generating $K$ with ratios $\rho_{1},...,\rho_{N}.$
Putting $\rho:=\max_{i=1,...,N} \rho_{i}<1$, we can find $m\in\en$ such that $\rho^{m}\diam K<D.$
Let $\psi_{1},...,\psi_{k}$ be all mappings of the form $\varphi_{\omega}$ with $\omega\in\{1,...,N\}^{m}$ ordered in such a way that $x\in\psi_{k}(K).$
Then the IFS $\{\psi_{1},...,\psi_{k}\}$ also generates $K$, satisfies OSC and moreover, $\psi_{k}(K)\subset K^{\circ}$.

Suppose that $r_{i}$ is the contraction ratio of $\psi_{i}.$
Since $\dim_M K=d$, we have $\sum_{i=1}^{k} r_{i}^{d}=1.$
Let $L$ be the self-similar set generated by the mappings $\psi_{1},...,\psi_{k-1}$.
Since $U:=K^{\circ}$ is a feasible open set for $K$ (with $\cl{U}=K$, see e.g.~\cite[Proposition~5.4]{PW}, and thus $\bd U=\bd K$) and by the choice of $\psi_k$ we have
$$
\partial U\subset \bigcup_{i=1}^{k-1}\psi_i (\partial U).
$$
Similarly, for any $n\in\N$
$$
\partial U\subset \bigcup_{|\omega|=n}\psi_\omega (\partial U)\subset \bigcup_{|\omega|=n}\psi_\omega (\overline U)=:L_n,
$$
where the unions are over all words $I\in\{1,2,\ldots,k-1\}^n$.
Therefore, using the fact that $L_n\to L$ in the Hausdorff metric as $n\to \infty$, we obtain that $\partial K=\partial U\subset L.$
Since $\sum_{i=1}^{k-1} r_{i}^{d}<1$ we have $\dim_M L<d$ and therefore $\dim_M\partial K<d$ as claimed.
\end{proof}

Theorem~\ref{fulldimension} is sufficient to resolve Conjecture~\ref{conj} for sets in $\R$.
\begin{cor} \label{cor:main-R}
  Let $K\subset\R$ be a self-similar set satisfying OSC. Then $s_0(K)<\dim_M K$ if and only if $K$ is locally flat.
In this case, $\dim_M K=1$.
\end{cor}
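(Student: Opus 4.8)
The plan is to deduce Corollary~\ref{cor:main-R} from Theorem~\ref{fulldimension} applied in the ambient space $\R$ (so $d=1$), together with the trivial observation that in $\R$ the only notions of local flatness that can occur are local $1$-flatness. Since for $d=1$ the dimension of the ambient space is $1$, a subset $P\subset\R^{d-m}=\R^0$ is a point, so local $1$-flatness of $K\subset\R$ just says that $K$ contains a (one-dimensional) cube, i.e., a nondegenerate closed interval in its interior relative to $\R$; and ``locally flat'' for $K\subset\R$ means exactly ``locally $1$-flat''.

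First I would record that, by Theorem~\ref{fulldimension} with $d=1$, the three conditions $\dim_M K=1$, $s_0(K)<\dim_M K$ (note $d-1=0$ here), and $K$ locally $1$-flat are all equivalent for a self-similar set $K\subset\R$ satisfying OSC. This is a direct instantiation; one only has to check that the hypotheses of Theorem~\ref{fulldimension} are literally the hypotheses here (self-similar, OSC, ambient dimension $d=1$) and that its conclusion reads, in this case, precisely as the chain $(i)\Leftrightarrow(ii)\Leftrightarrow(iii)$ above with $k=0$ in the role of $d-1$.

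Then the corollary follows: since ``$K$ is locally flat'' is by Definition~\ref{def:loc-flat} the same as ``$K$ is locally $m$-flat for some $1\le m\le 1$'', i.e. locally $1$-flat, the equivalence (ii)$\Leftrightarrow$(iii) of Theorem~\ref{fulldimension} gives ``$s_0(K)<\dim_M K$ iff $K$ is locally flat.'' Finally, the additional assertion ``in this case $\dim_M K=1$'' is exactly the implication (ii)$\Rightarrow$(i) of Theorem~\ref{fulldimension}. I would write the proof in two or three sentences essentially stating this reduction.

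There is no real obstacle; the only point requiring a line of care is the bookkeeping of indices---that $s_{d-1}$ becomes $s_0$ and that local $d$-flatness becomes local $1$-flatness when $d=1$---and the remark that in $\R$ there is no distinction between ``locally flat'' and ``locally $1$-flat'', so that no information is lost in passing from Theorem~\ref{fulldimension} to the corollary. I expect the author's proof to be a single short paragraph doing exactly this.

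\begin{proof}
This is the special case $d=1$ of Theorem~\ref{fulldimension}. Indeed, for a subset of $\R$, being locally flat means being locally $1$-flat, and for $d=1$ we have $d-1=0$. Hence the equivalence (ii)$\Leftrightarrow$(iii) of Theorem~\ref{fulldimension} states precisely that $s_0(K)<\dim_M K$ if and only if $K$ is locally flat, and the implication (ii)$\Rightarrow$(i) yields $\dim_M K=1$ in this case.
\end{proof}
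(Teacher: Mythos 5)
Your proof is correct and matches the paper exactly: the paper gives no separate argument for Corollary~\ref{cor:main-R} but presents it as the case $d=1$ of Theorem~\ref{fulldimension}, noting (as you do) that for subsets of $\R$ local flatness is the same as local $1$-flatness and that $s_{d-1}=s_0$ when $d=1$. Nothing is missing.
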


 We conjecture that Theorem~\ref{fulldimension} can be generalized to sets in $\R^d$ that are full-dimensional with respect to their affine hull. That is, for any set $K\subset\R^d$ whose affine hull has dimension $n\in\{1,\ldots,d\}$, the assertions (i)--(iii) in Theorem~\ref{fulldimension} are equivalent if $d$ is replaced with $n$. In fact, it is rather obvious that (i) and (iii) are equivalent, as the concept of local $k$-flatness is independent of the dimension of the ambient space as is the notion of Minkowski dimension (see \eqref{eq:s-exponents} below).
 To show the equivalence of (ii) with (i) (and (iii)), it seems however necessary to prove that scaling exponents of a set are independent of the dimension of the ambient space. We discuss this independence now for the case $n=1$, which is the only case we need to resolve Conjecture~\ref{conj} in the plane. The case of a general $n$ seems more difficult.

 In order to formulate the problem precisely, it is necessary to extend the notation to be able to distinguish different ambient space dimensions.
Recall from \eqref{eq:sk-def1} that the definition of scaling exponents of a set $F$ is based on parallel sets which depend on the choice of the ambient space. For instance, for a subset $F$ of $\R$, the parallel set in $\R$ is a finite union of segments, while the parallel set of $F$ in $\R^2$ is a two-dimensional set. Up to now, we have not emphasized this dependence. For a subset $F\subset\R^d$, we always considered the full parallel set in $\R^d$.

We will now use an extra upper index to indicate the dimension of the parallel set. For a set $F\subset\R^d$ with $\dim\aff F=n\geq 1$ and $l\in\{n,\ldots, d\}$, we write $F^l_\eps$ for the $\eps$-parallel set of $F$ in $\R^l$. (Note that $F_\eps^l$ is independent of the choice of the embedding space. Any $l$-dimensional subspace of $\R^d$ containing $F$ (and thus $\aff F$) provides the same parallel set up to isometry.)  We write $s_k^l(F)$ for the $k$-th scaling exponent of $F$ based on the $l$-dimensional parallel sets of $F$. Note that this notation makes only sense for $n\leq l\leq d$ and $k\leq l$.
Similarly, we use $\dim_M^l F$ and $\dim_S^l F$ to indicate the dimension dependence.
In this notation, we have, by definition, the relations
$$
s_l^l(F)=\udim_M^l F \text{ and } s_{l-1}^l(F)=\udim_S^l F
$$
for any $l\in\{n,\ldots,d\}$, whenever the scaling exponents are defined.
The well-known independence of the (upper) Minkowski dimension on the dimension of the ambient space
is then described by the relation
\begin{align}\label{eq:s-exponents}
   s_n^n(F)=s_{n+1}^{n+1}(F)=\ldots=s_d^{d}(F).
\end{align}
By application of \cite[Corollary~3.6]{rw09} to different ambient space dimensions, it follows immediately that
\begin{align} \label{eq:s-exponents2}
   s_{l-1}^l(F)=s_{l}^{l}(F)
\end{align}
for $l\in\{n+1,\ldots,d\}$. Equation \eqref{eq:s-exponents2} also holds for $l=n$, provided $\H^n(F)=0$ (or provided the upper Minkowski dimension $s_n^n(F)$ is replaced by the upper outer Minkowski dimension, see \cite[paragraph before Cor. 3.4]{rw09}).

In general (that is, for any bounded set $F\subset\R^d$ with $\dim \aff F=n\geq 1$), we conjecture that the relation
\begin{align} \label{eq:conj-exp}
   s_k^l(F)=s_k^{l+1}(F)
\end{align}
holds for any $k\in\{0,\ldots,d\}$ and any $l\in\{n,\ldots,d-1\}$ such that $l\geq k$, provided the exponents are well defined.
The first interesting case is $d=2$ and $n=1$ (for $d=1$ there are no such relations), for which the two relations  $s_0^1(F)=s_0^2(F)$ and $s_1^1(F)=s_1^2(F)$ are conjectured. This case is resolved in Proposition~\ref{subsets_of_R} below, which is another important step towards the resolution of Conjecture~\ref{conj} in $\R^2$.
In general, we note that, by combining the relations \eqref{eq:s-exponents} and \eqref{eq:s-exponents2} above, one gets immediately for each $k\in\{n,\ldots,d-1\}$,
\begin{align} \label{eq:conj-resol}
   s_k^k(F)=s_{k+1}^{k+1}(F)=s_{k}^{k+1}(F)
\end{align}
(and these exponents are always well defined) --  resolving the case $k=l$ of \eqref{eq:conj-exp}.

\begin{figure}
\begin{minipage}{120mm}
  \includegraphics[width=120mm]{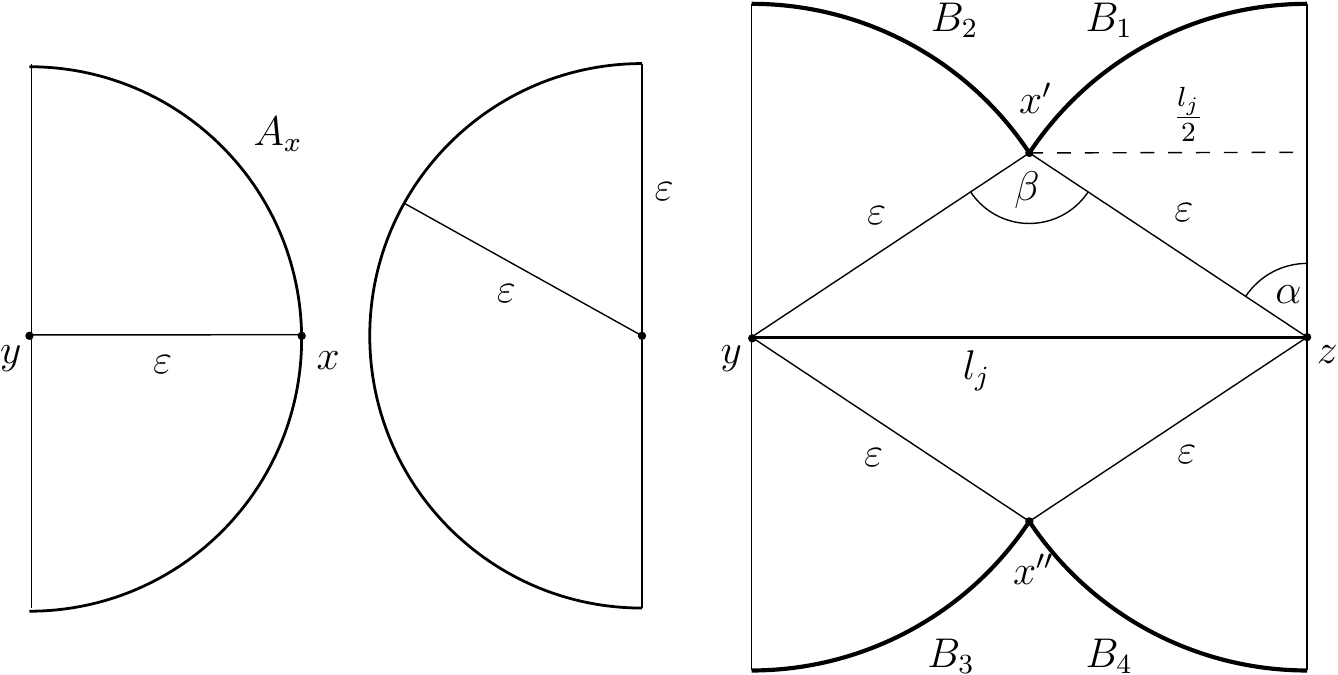}
 \end{minipage}
\caption{\label{fig:curv_arc} The set $F_\eps^2\cap (I_j\times \R)$ in the case $l_j> 2\eps$ (left) and in the case $l_j\leq 2\eps$ (right).}
\end{figure}

\begin{prop}\label{subsets_of_R}
Let $F\subset\R^d$ be a bounded set with $\dim \aff F=1$. Then
$$
s_0^1(F)=s_0^2(F)=\ldots =s_0^d(F).
$$
\end{prop}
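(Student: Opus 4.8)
\emph{Setup.} The plan is to compute $C_0^\var(F_\eps^l)$ from the geometry of the parallel set and to compare it with $C_0^\var(F_\eps^1)$. After a similarity we may assume that $F$ is compact and $F\subset\R\times\{0\}^{d-1}\subset\R^l$; write $\rho(u):=\dist(u,F)$ for the distance function of $F$ inside $\R$. One checks directly that
\[
F_\eps^l=\bigl\{(u,z)\in\R\times\R^{l-1}:\ u\in F_\eps^1,\ |z|\le\sqrt{\eps^2-\rho(u)^2}\,\bigr\},
\]
i.e.\ $F_\eps^l$ is the solid of revolution about the first axis over the one-dimensional parallel set $F_\eps^1$, which is a finite disjoint union of $N(\eps)$ compact intervals (see Figure~\ref{fig:curv_arc} for $l=2$). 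Let $\mathcal{G}$ denote the family of bounded connected components (``gaps'') of $\R\setminus F$; then $N(\eps)=1+\#\{g\in\mathcal{G}:|g|>2\eps\}$, and since $C_0$ of a disjoint union of intervals is a nonnegative measure, $C_0^\var(F_\eps^1)=C_0(F_\eps^1)=N(\eps)$.

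\emph{The two bounds.} For a.e.\ $\eps>0$ no gap of $F$ has length exactly $2\eps$, and then each of the $N(\eps)$ pieces of $F_\eps^l$ is contractible (retract $z$ to $0$), so by the Gauss--Bonnet theorem, applied to $F_\eps^l$ at such a regular $\eps$, $C_0(F_\eps^l)=\chi(F_\eps^l)=N(\eps)$; hence
\[
C_0^\var(F_\eps^l)\ \ge\ \bigl|C_0(F_\eps^l)\bigr|\ =\ N(\eps)\ =\ C_0^\var(F_\eps^1)\qquad(\text{a.e. }\eps),
\]
which already gives $s_0^l(F)\ge s_0^1(F)$ (and likewise $a_0^l(F)\ge a_0^1(F)$). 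For the reverse inequality I would establish, using Z\"ahle's integral formula (or a direct hypersurface-of-revolution computation), the bound
\[
C_0^\var(F_\eps^l)\ \le\ c_l\Bigl(N(\eps)+\tfrac1\eps\sum_{g\in\mathcal{G},\,|g|\le2\eps}|g|\Bigr)\qquad(\text{a.e. }\eps),
\]
with $c_l$ depending only on $l$. Indeed, $\bd F_\eps^l$ is built from $2N(\eps)$ spherical caps of radius $\eps$ near the ends of the components of $F_\eps^1$ (each contributing a bounded amount to $C_0^\var$), from flat cylindrical pieces over the set $\{\rho=0\}$ (contributing $0$), and, over each filled gap $g$, from spherical pieces of radius $\eps$ of $\Ha^{l-1}$-measure $\asymp\eps^{l-2}|g|$ together with a ridge of $\Ha^{l-2}$-measure $\asymp\eps^{l-3}|g|$, on which the Gauss--Kronecker curvature is $\asymp\eps^{-(l-1)}$; so each gap contributes $\lesssim|g|/\eps$. (For $l=2$ one gets the exact identity $C_0^\var(F_\eps^2)=N(\eps)+\frac4\pi\sum_{g\in\mathcal{G},\,|g|\le2\eps}\arcsin\!\bigl(|g|/(2\eps)\bigr)$.)

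\emph{Finishing.} It remains to see that the gap term is negligible compared with $N(\eps)$ at the level of scaling exponents. Splitting $\{g:|g|\le2\eps\}$ dyadically by $2^{-j}\cdot2\eps\le|g|<2^{1-j}\cdot2\eps$ and using $\#\{g\in\mathcal{G}:|g|\ge a\}\le N(a/4)$, one gets $\tfrac1\eps\sum_{|g|\le2\eps}|g|\lesssim\sum_{j\ge1}2^{-j}N(2^{-j}\eps)$. Now $N(\eps)=O(\eps^{-1})$ and, by the upper bound, $C_0^\var(F_\eps^l)=O(\eps^{-1})$, so $s_0^1(F),s_0^l(F)\le1$; if $s_0^1(F)=1$ the lower bound finishes the proof. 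Otherwise fix $s\in(s_0^1(F),1)$, so $\esslim{\eps\to0}\eps^sN(\eps)=0$; given $\eta>0$ choose $\delta_0$ with $\delta^sN(\delta)<\eta$ for $\delta<\delta_0$, and note that $2^{-j}\eps<\eps<\delta_0$ for all $j\ge1$ once $\eps<\delta_0$, whence $\eps^s\cdot\tfrac1\eps\sum_{|g|\le2\eps}|g|\lesssim\eta\sum_{j\ge1}2^{(s-1)j}$, which tends to $0$. By the upper bound $\esslim{\eps\to0}\eps^s C_0^\var(F_\eps^l)=0$, hence $s_0^l(F)\le s$; letting $s\downarrow s_0^1(F)$ and combining with the lower bound gives $s_0^l(F)=s_0^1(F)$ for every $l\in\{1,\dots,d\}$. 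The same argument, with $\esslim$ replaced by the averaged limit, handles the exponents $a_0^l$.

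\emph{Main obstacle.} The delicate step is the upper bound on $C_0^\var(F_\eps^l)$: one must control the total variation of a \emph{signed} curvature measure over a boundary that carries convex caps and concave ridges simultaneously, and where infinitely many small gaps may accumulate inside a single component --- the essential gain being the uniform factor $\eps^{-1}$, reflecting that all principal curvatures occurring are of order $\eps^{-1}$. A secondary but necessary point, relevant only for $l\ge4$ where Fu's result \cite{Fu} does not directly apply, is to verify that a.e.\ $\eps$ is regular for $F_\eps^l$ (positive reach of $\widetilde{F_\eps^l}$, Lipschitz-manifold boundary); this holds because the only obstructions --- tangentially intersecting ridge spheres and pinch points on the axis --- occur precisely when some gap of $F$ has length $2\eps$, and $\mathcal{G}$ is countable.
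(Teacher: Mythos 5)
Your overall route is the same as the paper's: both directions rest on the decomposition of the parallel set according to the complementary intervals (the fractal string), with the large gaps contributing a bounded amount each and every short gap $g$ contributing at most $c_l\,|g|/\eps$ to $C_0^\var(F_\eps^l)$ (including the same exact $\frac4\pi\arcsin$ identity for $l=2$). Within that frame you make two genuinely different choices. For the lower bound you use Gauss--Bonnet, $C_0^\var(F_\eps^l)\ge|\chi(F_\eps^l)|=N(\eps)$; this is slicker than the paper's argument but leans on the identity $C_0(F_\eps^l)=\chi(F_\eps^l)$ for curvature measures defined by reflection (plus regularity of a.e.\ $\eps$ in every dimension $l$, which you rightly flag for $l\ge4$), whereas the paper avoids both issues by bounding the \emph{variation} measure from below on explicit half-spheres $A_x\subset\bd F_\eps^l$, each carrying $C_0^\var(F_\eps^l,A_x)=\frac12$. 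For the conclusion you run a self-contained dyadic summation showing $\eps^{-1}\sum_{|g|\le2\eps}|g|$ is negligible at the level of exponents; the paper instead observes that $2\eps+\sum_{|g|>2\eps}2\eps+\sum_{|g|\le2\eps}|g|$ is exactly $\lambda_1(F_\eps^1\setminus F)$ and quotes $s_0^1(F)=\udim_S^1F=$ upper outer Minkowski dimension from \cite{rw09}. Your version buys independence from that reference at the cost of a page of bookkeeping; both are correct.

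The one place where your write-up falls short of a proof is the per-gap upper bound for general $l$, which you assert with a curvature heuristic rather than establish. The heuristic is essentially right for the spherical caps (measure $\asymp\eps^{l-2}|g|$ times Gauss--Kronecker curvature $\asymp\eps^{-(l-1)}$ gives $|g|/\eps$), but the ridge where the two spheres meet is a singular $(l-2)$-sphere of radius $\approx\eps$, so its $\H^{l-2}$-measure is $\asymp\eps^{l-2}$, not $\eps^{l-3}|g|$ (the latter is the $\H^{l-1}$-measure of the normal bundle \emph{over} the ridge, where the exterior angle $\asymp|g|/\eps$ enters); running Z\"ahle's formula there requires handling an infinite principal curvature. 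The paper sidesteps all of this: by additivity and symmetry, $C_0^\var(F_\eps^l,I_j\times\R^{l-1})=4\,C_0(B^l(0,\eps),[0,\tfrac{|g|}2)\times\R^{l-1})$, and the curvature of a spherical cap of a ball equals the normalized volume of the cone over it, which sits inside a cylinder of volume $\tfrac{|g|}2\alpha_{l-1}\eps^{l-1}$ --- giving the bound $\tfrac{2\alpha_{l-1}}{\alpha_l}\tfrac{|g|}{\eps}$ with no singular set to worry about. If you adopt that computation for the missing step, your argument is complete.
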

\begin{proof}
Since it makes no difference whether the parallel set $F_\eps^k$ is studied in $\R^k$ or in $\R^m$ with $m> k$, it suffices to prove that $s_0^1(F)=s_0^d(F)$ in $\R^d$ for any $d\geq2$.

First we will show the inequality $s_0^1(F)\leq s_0^d(F)$, for which we employ the \emph{fractal string} $\sL=(l_j)_{j\in\N}$ associated to $F$, that is, the sequence of the lengths $l_j$ of the complementary intervals $I_j$ of $\overline{F}$ ordered in a non-increasing way. Note that $s_0^1(F)=\udim_S^1 F$ and that the corresponding curvature measure is the counting measure on $\bd(F_\eps^1)$ (recall, $F_\eps^1$ is the $\eps$-parallel set in $\R$), i.e., $C_0^\var(F_\eps^1)=C_0(F_\eps^1)=\frac 12 \H^0(\bd(F_\eps^1))$. The latter is given in terms of $\sL$ by
$$
\H^0(\bd(F_\eps^1))=2+2\# \{j: l_j>2\eps\}.
$$
Observe that to each point $x\in\bd(F_\eps^1)$ there is a unique nearest point $y\in F$ (with $d(x,y)=\eps$) either to the left or to the right of $x$, i.e. $y=x+\eps$ or $y=x-\eps$.
Moreover, to each such $x$ there corresponds a unique $(d-1)$-dimensional half-sphere $A_x$ in $\bd(F_\eps^d)$ with radius $\eps$ and centre $y$. (In the case $d=2$, $A_x$ is a half-circle, cf.~Figure~\ref{fig:curv_arc} (left).)  Obviously, $C^\var_0(F_\eps^d, A_x)=\frac 12$ for each $x\in\bd(F_\eps^1)$ and in any dimension $d\geq 2$.
Therefore,
$$
C^\var_0(F_\eps^1)=\frac 12 \H^0(\bd(F_\eps^1))=\sum_{x\in\bd(F_\eps^1)} \frac 12=\sum_{x\in\bd(F_\eps^1)}C^\var_0(F_\eps^d, A_x)\le C^\var_0(F_\eps^d),
$$
from which the inequality $s_0^1(F)\leq s_0^d(F)$ follows immediately. (For each $t> s_0^d(F)$, one has $\lim_{\eps\to 0} \eps^t C_0^\var(F_\eps^d)=0$ and thus $\lim_{\eps\to 0} \eps^t C_0^\var(F_\eps^1)=0$ by the above inequality, implying $s_0^1(F)\leq t$.)

   The reverse inequality is now first proved for the case $d=2$, for which  we split the parallel set $F_\eps^2$ as follows. Denoting by $I$ the smallest closed interval containing $F$, we have the disjoint composition
$$
\R^2=(F\times\R)\cup (I^c\times \R) \cup \bigcup_{j=1}^\infty (I_j\times \R)
$$
and thus
\begin{align}\label{eq:parallel_decomposed}
   C^\var_0(F_\eps^2)\le C^\var_0(F_\eps^2,F\times\R)+C^\var_0(F_\eps^2, I^c\times \R)+\sum_{j=1}^\infty C^\var_0(F_\eps^2,I_j\times \R).
\end{align}
It is not difficult to see that the first term in this sum is zero and the second term is 1 (curvature of two half circles). For the terms in the remaining sum, we distinguish between those $j$ for which $l_j>2\eps$ and those for which $l_j\leq 2\eps$ holds.
In the first case, $\bd(F_\eps^2)\cap (I_j\times R)$ consists of two disjoint half-circles of radius $\eps$ (see Figure~\ref{fig:curv_arc} (left)) such that $C^\var_0(F_\eps^2,I_j\times \R)=1$. In the second case, one has
$$
C^\var_0(F_\eps^2,I_j\times \R)=\frac 4\pi \arcsin(\frac{l_j}{2\eps}),
$$
which is easily seen from Figure~\ref{fig:curv_arc} (right). Indeed, we have positive curvature $\alpha/(2\pi)$ on each of the 4 arcs $B_i$ and negative curvature $\beta/(2\pi)$ at the points $x'$ and $x''$ where the arcs meet. Since $\beta=2\alpha$, we thus obtain $C^\var_0(F_\eps^2,I_j\times \R)=4\cdot \frac \alpha {2\pi}+ 2\cdot \frac\beta{2\pi}=\frac{4}\pi \alpha$. Now the claim follows by noting that the angle $\alpha$ is determined by the relation $\sin \alpha=\frac{l_j}{2\eps}$.
Using that $\arcsin(t)\leq \frac\pi 2 t$ for $t\in[0,1]$, we infer that
$$
C^\var_0(F_\eps^2,I_j\times \R)\leq \frac{l_j}{\eps},
$$
for each $j$ such that $l_j\leq 2\eps$.
Plugging all this into equation \eqref{eq:parallel_decomposed}, we get
\begin{align*} 
    C^\var_0(F_\eps^2)&\le 1+\sum_{j:l_j>2\eps} 1 + \eps^{-1} \sum_{j:l_j\leq 2\eps} l_j\\
    &\leq \eps^{-1}\left(2\eps + \sum_{j:l_j>2\eps} 2\eps + \sum_{j:l_j\leq 2\eps} l_j\right).
\end{align*}
Now observe that the expression in parentheses is exactly the length of the parallel set $F_\eps^1$. Therefore,
$$
 C^\var_0(F_\eps^2)\le \eps^{-1} \lambda_1(F_\eps^1\setminus F).
$$
Now let $t>s_0^1(F)$. Since $s_0^1(F)=\udim_S^1 F$ coincides with the upper outer Minkowski dimension,  see equation \eqref{eq:s-exponents2},
we have
$
\lim_{\eps\to 0} \eps^{t-1} \lambda_1(F_\eps^1\setminus F)=0,
$
which implies
$
\lim_{\eps\to 0} \eps^{t}\ C^\var_0(F_\eps^2)=0
$
and thus $s_0^2(F)\geq t$. This shows $s_0^1(F)\geq s_0^2(F)$ and the proof for the case $d=2$ is complete. If $d$ is arbitrary, for the proof of $s_0^1(F)\geq s_0^d(F)$, one can decompose $\R^d$ similarly as $\R^2$ above and obtain an estimate for $C_0^\var(F_\eps^d)$ similar to equation \eqref{eq:parallel_decomposed}:
\begin{align*}
   C^\var_0(F_\eps^d)\le C^\var_0(F_\eps^2,F\times\R^{d-1})+C^\var_0(F_\eps^2, I^c\times
   \R^{d-1})+\sum_{j=1}^\infty C^\var_0(F_\eps^2,I_j\times \R^{d-1}).
\end{align*}
 It is easy to see that also in this general situation the first term vanishes while the second term is equal to 1. In the remaining sum, for all indices $j$ such that $2\eps<l_j$, we still have $C_0^\var(F_\eps^d,I_j\times \R^{d-1})=1$ since this set has the curvature of a $d$-dimensional ball. For all $j$ such that $2\eps\geq l_j$, we claim that
\begin{align}\label{eq:case-d}
C^\var_0(F_\eps^d,I_j\times \R^{d-1})\leq c_d \frac{l_j}{\eps},
\end{align}
for some constant $c_d$ independent of $j$ and $\eps$. Then all the remaining arguments carry over from the case $d=2$ discussed above.

For a proof of \eqref{eq:case-d}, let $y$ and $z$ denote the endpoints of $I_j$. Note that $C_0^\var(F_\eps^d, I_j\times \R^{d-1})=C_0^\var(B^d(y,\eps)\cup B^d(z,\eps), I_j\times \R^{d-1})$, i.e., we have to compute the curvature of two intersecting $\eps$-balls. Recalling that a ball as well as the union of two intersecting balls have total curvature 1, the additivity and symmetry yield that
$$
C_0^\var(F_\eps^d, I_j\times \R^{d-1})=4\cdot C_0(B^d(y,\eps), [y,\frac{y+z}2)\times \R^{d-1})=4\cdot C_0(B^d(0,\eps), [0,\frac{l_j}2)\times \R^{d-1})
$$
Now observe that, by symmetry, the curvature of any subset $S$ of the boundary of $B^d(0,\eps)$ is given by the normalized volume of the associated cone $K(S):=\bigcup\{[0,s]: s\in S\}$, that is,
\begin{align*}
C_0(B^d(0,\eps), S)=\frac{V_d(B^d(0,\eps)\cap K(S))}{V_d(B^d(0,\eps))}=\frac{V_d(B^d(0,\eps)\cap K(S))}{\alpha_d \eps^{d}}.
\end{align*}
Here $\alpha_n$ denotes the volume of the $n$-dimensional unit ball.
Since for $S=\bd B^d(0,\eps)\cap ([0,l_j/2)\times \R^{d-1})$ the cone $K(S)$ is contained in the cylinder $Z=[0,\frac{l_j}{2}]\times B^{d-1}(0,\eps)$, whose volume is given by $\frac{l_j}{2} \alpha_{d-1}\eps^{d-1}$, we get
\begin{align*}
C_0^\var(F_\eps^d, I_j\times \R^{d-1})&=4 \frac{V_d(B^d(0,\eps)\cap K(S))}{V_d(B^d(0,\eps))}
\leq  \frac{4\, V_d(Z)}{\alpha_d \eps^{d}}\\
&=  \frac{4 \frac{l_j}{2} \alpha_{d-1}\eps^{d-1}}{\alpha_d \eps^{d}}
=\frac{2\alpha_{d-1}}{\alpha_d} \frac{l_j}{\eps},
\end{align*}
 This proves the estimate in \eqref{eq:case-d} for the constant $c_d:=2\alpha_{d-1}/\alpha_d$ and completes the proof of the inequality $s_0^d(F)\leq s_0^1(F)$.
\end{proof}

The following statement is a generalization of Theorem~\ref{fulldimension} to sets in $\R^d$ with a 1-dimensional affine hull. We will apply it later in particular in the case $d=2$.
\begin{prop}\label{embedding fulldimension}
Let $K$ be a self-similar set in $\er^{d}$ satisfying OSC and assume that the affine hull of $K$ has dimension $1$. 
Then the following assertions are equivalent:
\begin{enumerate}
  \item[(i)] $\dim_M K =1$,
  \item[(ii)] $s_{0}(K)<\dim_M K$,
  \item[(iii)] $K$ is locally $1$-flat.
\end{enumerate}
\end{prop}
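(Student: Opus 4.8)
The plan is to reduce the statement to the one-dimensional case already settled in Theorem~\ref{fulldimension}, and then transport the conclusion back to $\R^d$ using Proposition~\ref{subsets_of_R} together with the ambient-independence of Minkowski dimension and of local flatness.

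First I would pass to the affine hull. Write $V:=\aff K$, which by hypothesis is a line, and fix an isometry $V\cong\R$. Since $K$ is invariant and each $S_i$ is a similarity, $S_i(K)\subseteq K\subseteq V$ forces $S_i(V)=\aff(S_i K)=V$; hence every $S_i$ restricts to a similarity $\bar S_i$ of $V\cong\R$, and the IFS $\{\bar S_1,\dots,\bar S_N\}$ has $K$ as its self-similar set. To see that this restricted IFS satisfies OSC in $\R$, I would invoke SOSC in $\R^d$ (Schief): choose a feasible open set $O\subset\R^d$ with $O\cap K\neq\emptyset$ and put $O':=O\cap V$, which is a nonempty relatively open (hence, after the identification, open) subset of $\R$; then $\bar S_i(O')=S_i(O)\cap V\subseteq O\cap V=O'$ and $\bar S_i(O')\cap\bar S_j(O')\subseteq S_iO\cap S_jO=\emptyset$ for $i\neq j$, so $O'$ is feasible. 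The $\bar S_i$ are moreover pairwise distinct, since $\bar S_i=\bar S_j$ with $i\neq j$ would make the nonempty set $S_i(O\cap K)=S_j(O\cap K)$ lie inside $S_iO\cap S_jO=\emptyset$. Next I would record the transfer of data: $\dim_M K$ is the same whether computed in $\R$ or in $\R^d$ (cf.\ \eqref{eq:s-exponents}); $K$ is locally $1$-flat as a subset of $\R^d$ if and only if it is locally $1$-flat as a subset of $\R$ (since $K\subset V$, any cube $C\subset\R^d$ with $C\cap K$ similar to $[0,1]\times P$ forces $P$ to be a single point, as its product with $[0,1]$ lies on a line, so $C\cap K$ is a nondegenerate segment; conversely a nondegenerate segment $C\cap K$ in $\R$ lies in the interior of a suitable small cube of $\R^d$ that meets $K$); and $s_0(K)=s_0^d(K)=s_0^1(K)$ by Proposition~\ref{subsets_of_R}.

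With these preliminaries in place, I would apply Theorem~\ref{fulldimension} with $d=1$ to the IFS $\{\bar S_1,\dots,\bar S_N\}$ on $\R$, noting that $s_{d-1}=s_0$ when $d=1$: the theorem gives that ``$\dim_M K=1$'', ``$s_0^1(K)<\dim_M K$'' and ``$K$ is locally $1$-flat in $\R$'' are mutually equivalent. Translating each of the three conditions back to $\R^d$ by the identifications of the preceding paragraph turns this into the asserted equivalence of (i), (ii) and (iii).

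I expect no substantial obstacle: all the genuine content is already contained in Theorem~\ref{fulldimension} and Proposition~\ref{subsets_of_R}. The only point requiring a moment's care is the descent of OSC to the affine hull $V$, which is precisely where SOSC (rather than bare OSC) is used, in order to guarantee that the restricted feasible set $O\cap V$ is nonempty; the ambient-independence of local $1$-flatness for a set lying on a line is then immediate.
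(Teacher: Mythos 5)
Your proof is correct and follows essentially the same route as the paper's: reduce to the one-dimensional case of Theorem~\ref{fulldimension}, using the ambient-independence of the Minkowski dimension and of local $1$-flatness for sets on a line together with the identity $s_0^d(K)=s_0^1(K)$ from Proposition~\ref{subsets_of_R}. Your explicit verification that the IFS restricts to a system of similarities of $\aff K$ satisfying OSC (via SOSC) is a detail the paper leaves implicit, but it is precisely what is needed to invoke Theorem~\ref{fulldimension} with $d=1$.
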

\begin{proof}
It is easy to see that $K$ viewed as a set in $\R$ is locally $1$-flat if and only if $K$ is locally 1-flat as a set in $\R^d$. (Indeed, if $C\subset \R^d$ is a flat cube for $K$ in $\R^d$ for some $x\in K$ and $\eps>0$, then its projection $C'$ onto $\R\times\{0\}^{d-1}$ is a flat cube for $K$ and vice versa.) Therefore the equivalence of (i) and (iii) follows immediately from Theorem~\ref{fulldimension} and the fact that die Minkowski dimension is independent of the dimension of the ambient space (see \eqref{eq:s-exponents}). The equivalence of (ii) and (iii) is also direct consequence of Theorem~\ref{fulldimension} taking into account the relation $s_0^d(K)=s_0^1(K)$
derived in Proposition~\ref{subsets_of_R}.
\end{proof}

\section{Results for self-similar sets in $\R^d$} \label{sec:s0}

   Now we discuss some simple geometric conditions for self-similar sets in $\R^d$ which ensure that their $0$-th scaling exponents are equal to their dimension. More precisely, we will show that this is true for all self-similar sets whose complement is disconnected (see Theorem~\ref{thm:conn-compl}) and for all sets that are totally disconnected (see Corollary~\ref{cor:disconn}).
   Throughout we assume that $K\subset\R^d$ is a \emph{regular} self-similar set, by which we mean that almost all $\eps>0$ are regular for $K$, cf.\ Section~\ref{sec:pre}.

   The following observation is essential for the results in this section. For a set $A\subset\R^d$ with positive reach, we denote by $\nor A$ its \emph{normal bundle} and for $(x,n)\in\nor A$, $\Tan(\nor A,(x,n))$ is the tangent cone of $\nor A$ at $(x,n)$. Let $\omega_d:=\Ha^{d-1}(S^{d-1})$ be the surface area of the unit sphere $S^{d-1}$ in $\R^d$, and let $\pi_{2}:\er^{d}\times S^{d-1}\to S^{d-1},(x,n)\mapsto n$ be the projection onto the second component.

\begin{lem}\label{lem:C0var}
Let $A$ be a set with positive reach, $B\subset\partial A$ and $S\subset S^{d-1}.$
Define $B^{*}=\{(x,n)\in\nor A: x\in B\}$ and assume that $\pi_{2}(B^{*})\supset S.$
Then
\begin{align}\label{eq:C0var}
  \gC_{0}^{\var}(A,B^{*})\geq \omega_d^{-1} \Ha^{d-1}(S).
\end{align}
In particular, if $A$ is compact, then $\gC_{0}^{\var}(A)\geq 1$. Similarly, if the closed complement $\widetilde{A}$ of $A$ is bounded (and still, $A$ has positive reach), then $\gC_{0}^{\var}(A)\geq 1$.
\end{lem}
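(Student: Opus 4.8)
The plan is to use the Z\"ahle integral representation for $\gC_0$ quoted earlier (the case $k=0$ of Theorem~3 in \cite{Z1}), which for a set $A$ with positive reach gives
\begin{align*}
  \gC_0(A,B) = c_0^{-1}\int_{\nor A}\ind{B}(x,n)\,\frac{1}{\prod_{j=1}^{d-1}\sqrt{1+\kappa_j^2(x,n)}}\,\Ha^{d-1}(d(x,n)),
\end{align*}
with $c_0 = d\,\alpha(d) = \omega_d$. The integrand here is nonnegative, so in fact $\gC_0^\var(A,\cdot)=\gC_0(A,\cdot)$ on subsets of $\nor A$, and the statement reduces to a lower bound for this integral over $B^*$. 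The key geometric fact I would use is that the \emph{spherical image map} (or Gauss map) $\nu\colon \nor A\to S^{d-1}$, $(x,n)\mapsto n$, which is exactly $\pi_2$ restricted to $\nor A$, has the property that the weight $\prod_j (1+\kappa_j^2)^{-1/2}$ is precisely the Jacobian factor that makes $\nu$ volume-preserving onto $S^{d-1}$; this is the classical area/coarea formula for the generalized Gauss map of sets with positive reach (see \cite{Z1}). Concretely, for any Borel $S\subset S^{d-1}$,
\begin{align*}
  \int_{\nu^{-1}(S)}\frac{\Ha^{d-1}(d(x,n))}{\prod_{j=1}^{d-1}\sqrt{1+\kappa_j^2(x,n)}}
  \;=\; \int_{S}\Ha^0(\nu^{-1}(n))\,\Ha^{d-1}(dn)\;\geq\;\Ha^{d-1}(S),
\end{align*}
since every fibre $\nu^{-1}(n)$ over a point $n$ in the image is nonempty, hence has cardinality at least $1$.

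First I would record that $\nu^{-1}(S)\supseteq B^*$ whenever $\pi_2(B^*)\supseteq S$ is \emph{not} quite what is needed; rather the correct inclusion is $B^*\subseteq \nu^{-1}(\pi_2(B^*))$ together with $\pi_2(B^*)\supseteq S$, which gives $\nu^{-1}(S)\subseteq \nu^{-1}(\pi_2(B^*))$, not $\nu^{-1}(S)\subseteq B^*$. So instead I would argue directly: for each $n\in S$ there is, by the hypothesis $\pi_2(B^*)\supseteq S$, at least one $x\in B$ with $(x,n)\in\nor A$, i.e.\ $(x,n)\in B^*$. Picking one such $x=x(n)$ for each $n\in S$, the set $\{(x(n),n):n\in S\}$ is a subset of $B^*$ on which $\pi_2$ is injective with image $S$, so the coarea/area formula applied to this subset already yields
\begin{align*}
  \gC_0^\var(A,B^*)\;=\;\gC_0(A,B^*)\;\geq\; \omega_d^{-1}\int_{S}\Ha^0\big(\nu^{-1}(n)\cap B^*\big)\,\Ha^{d-1}(dn)\;\geq\;\omega_d^{-1}\Ha^{d-1}(S),
\end{align*}
which is \eqref{eq:C0var}. (Measurability of a selection $n\mapsto x(n)$ is a routine measurable-selection issue; alternatively one avoids it by noting $\Ha^0(\nu^{-1}(n)\cap B^*)\geq 1$ for $\Ha^{d-1}$-a.e.\ $n\in S$ directly from the hypothesis and integrating.)

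For the ``in particular'' statements: if $A$ is compact and has positive reach, then for every direction $n\in S^{d-1}$ the supporting hyperplane of $A$ with outer normal $n$ touches $A$ at some point $x$, and $(x,n)\in\nor A$; hence $\pi_2(\nor A)=S^{d-1}$, and taking $B=\partial A$, $S=S^{d-1}$ in \eqref{eq:C0var} gives $\gC_0^\var(A)\geq \omega_d^{-1}\Ha^{d-1}(S^{d-1})=1$. If instead $\widetilde A$, the closure of the complement of $A$, is bounded, then $\widetilde A$ is compact with positive reach, and for each $n\in S^{d-1}$ the supporting hyperplane of $\widetilde A$ with outer normal $-n$ touches $\widetilde A$ at a point $x$ which lies on $\partial A$ and has $n\in\Nor(A,x)$; again $\pi_2(\nor A)=S^{d-1}$ and the same bound follows. (One should remember the sign/reflection convention $C_0(F_\eps,\cdot)=(-1)^{d-1}C_0(\widetilde{F_\eps},\cdot)$ and that on the normal bundle the relevant statement is about $\gC_0^\var$, so signs do not cause trouble here.)

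\textbf{Main obstacle.} The only real content is the coarea identity for the generalized Gauss map, i.e.\ that the weight $\prod_j(1+\kappa_j^2)^{-1/2}$ in Z\"ahle's formula is exactly the Jacobian of $\pi_2|_{\nor A}$; this is standard for sets of positive reach but I would want to cite it precisely (it is implicit in \cite{Z1}, and can also be extracted from Federer's coarea formula applied to the Lipschitz map $\pi_2$ on the $(d-1)$-rectifiable set $\nor A$, using that the approximate tangent spaces of $\nor A$ are graphs over the horizontal by the principal-curvature data). The surjectivity-of-$\pi_2$ arguments via supporting hyperplanes are elementary. The measurable-selection point is a minor technicality that can be side-stepped as indicated.
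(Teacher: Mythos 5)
Your overall strategy -- apply Z\"ahle's integral representation, identify the weight in that formula with the Jacobian of $\pi_2$ restricted to $\nor A$, and then use Federer's coarea formula together with the fact that every fibre over $S$ is nonempty -- is exactly the route the paper takes. However, there is a genuine error in the formula you start from. For $k=0$ the sum in Z\"ahle's representation runs over $1\leq i_1<\dots<i_{d-1}\leq d-1$, so it consists of the single term $\prod_{j=1}^{d-1}\kappa_j(x,n)$; the correct density is therefore
\begin{align*}
  \frac{\prod_{j=1}^{d-1}\kappa_{j}(x,n)}{\prod_{j=1}^{d-1}\sqrt{1+\kappa_j^2(x,n)}},
\end{align*}
not $\prod_{j}(1+\kappa_j^2)^{-1/2}$ (the latter is the $k=d-1$ density, where the empty product in the numerator is $1$). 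Two of your assertions collapse with this: (a) the integrand is \emph{not} nonnegative in general (the generalized principal curvatures can be negative, e.g.\ for the positive-reach sets $\widetilde{F_\eps}$ arising in this paper), so $\gC_0^\var(A,\cdot)\neq\gC_0(A,\cdot)$ on $\nor A$; and (b) the weight $\prod_j(1+\kappa_j^2)^{-1/2}$ is the Jacobian of $\pi_1$, not of the Gauss map $\pi_2$. (Check with a disk of radius $r$ in $\R^2$: $\nor A$ is a circle of length $2\pi\sqrt{r^2+1}$ and $\kappa=1/r$, so your weight integrates to $2\pi r=\Ha^1(\bd A)$, whereas the Gauss map should produce $2\pi=\Ha^1(S^1)$.)

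The repair is local and brings you exactly to the paper's proof: since the tangent space of $\nor A$ at $(x,n)$ is spanned by the orthonormal vectors $\bigl(a_i/\sqrt{1+\kappa_i^2},\,\kappa_i a_i/\sqrt{1+\kappa_i^2}\bigr)$, the Jacobian of $\pi_2$ is $\prod_j|\kappa_j|/\sqrt{1+\kappa_j^2}$, which is precisely the \emph{absolute value} of the correct $k=0$ density. Passing to the variation measure replaces the density by its absolute value, so $\gC_0^\var(A,B^*)=\omega_d^{-1}\int_{B^*}\operatorname{ap}J\pi_2\,d\Ha^{d-1}$, and the coarea formula then gives $\int_{B^*}\operatorname{ap}J\pi_2\,d\Ha^{d-1}=\int_{S^{d-1}}\Ha^0(B^*\cap\pi_2^{-1}(y))\,d\Ha^{d-1}(y)\geq\Ha^{d-1}(S)$, which is your own fallback remark about fibre cardinality and needs no measurable selection. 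Your treatment of the two ``in particular'' statements via supporting hyperplanes is correct and matches the paper.
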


\begin{proof}
Due to \cite{Z1}, for $\H^{d-1}$-almost all $(x,n)\in\nor A$, $\Tan(\nor A, (x,n))$ is a $(d-1)$-dimensional linear space and
orthonormal principal directions $a_{1}(x,n),\ldots,$ $a_{d-1}(x,n)\in\er^{d}$ as well as
the corresponding (generalized) principal curvatures $\kappa_{1}(x,n),...,$ $\kappa_{d-1}(x,n)\in(-\infty,\infty]$
are well defined. The vectors
$$
\left(\frac{a_{i}(x,n)}{\sqrt{1+\kappa_{i}^{2}(x,n)}},\frac{\kappa_{i}(x,n)a_{i}(x,n)}{\sqrt{1+\kappa_{i}^{2}(x,n)}}\right),\quad i=1,...,d-1,
$$
form an orthonormal basis of $\Tan(\nor(A), (x,n)).$
Therefore
\begin{equation}\label{valuedet}
\sqrt{\det\left(D\pi_2(x,n)\cdot (D\pi_2(x,n))^{*}\right)}=\left|\frac{\prod_{i=1}^{d-1}\kappa_{i}(x,n)}{\prod_{i=1}^{d-1}\sqrt{1+\kappa_{i}^{2}(x,n)}}\right|
\end{equation}
for $\H^{d-1}$-almost every $(x,n)\in\nor A$.
On the one hand, using Federer's coarea formula \cite[{§} 3.2.22]{F69}, we get
\begin{equation}\label{measure}
\begin{aligned}
\int_{B^{*}}& \sqrt{\det\left(D\pi_2(x,n)\cdot (D\pi_2(x,n))^{*}\right)}\, d\H^{d-1}(x,n)\\
&=\int_{S^{d-1}}\H^{0}(B^{*}\cap \pi_2^{-1}(y))\,d\H^{d-1}(y)
\geq \int_{S}1\;d\H^{d-1}(y)=\H^{d-1}(S).
\end{aligned}
\end{equation}
On the other hand, by \cite[Theorem 3]{Z1}, we have
\begin{align*}
\gC_{0}^{\var}(A,B^{*})=\omega_d^{-1}\int_{B^{*}}\left|\frac{\prod_{i=1}^{d-1}\kappa_{i}(x,n)}{\prod_{i=1}^{d-1}\sqrt{1+\kappa_{i}^{2}(x,n)}}\right|d\H^{d-1}(x,n).
\end{align*}
Now it suffices to combine this with (\ref{valuedet}) and (\ref{measure}) to obtain \eqref{eq:C0var}.

If $A$ is compact, choose $B=\bd A$ and observe that $\pi_2(B^*)=S^{d-1}$ (since for every direction $n\in S^{d-1}$ there is a hyperplane with normal vector $n$ supporting $A$ in at least one point $x$ and $(x,n)\in \nor A$). Hence one can choose $S=S^{d-1}$ and the second assertion follows from \eqref{eq:C0var}. If $\widetilde{A}$ is bounded, we can argue similarly. For $n\in S^{d-1}$, there is a hyperplane $H_n$ with normal direction $n$ touching $\tilde A$ in (at least) one point $x\in\bd \tilde A=\bd A$. Since $A$ has positive reach and $x\in\bd A$, there is at least one direction $n'\in S^{d-1}$ such that $(x,n')\in\nor A$. Because of the touching hyperplane $H_n$ which belongs to the tangent cone of $A$ at $x$, this normal direction $n'$ is unique and equal to $-n$. Since $n\in S^{d-1}$ was arbitrary, we infer that $\pi_2((\bd A)^*)=S^{d-1}$. Hence we can again apply \eqref{eq:C0var} with $S=S^{d-1}$ and conclude that $\gC_{0}^{\var}(A)\geq 1$. This completes the proof.
\end{proof}

The following statement is a general scheme to show that the (similarity) dimension of a self-similar set is a lower bound for its scaling exponents. It is a modification and generalization of \cite[Theorem 2.3.8]{W1}. We have formulated a version for curvature direction measures; a corresponding statement holds for the curvature measures i.e., if in the hypothesis as well as in the conclusion $\gC_k^\var(K_\eps,\cdot)$ is replaced by $C_k^\var(K_\eps,\cdot)$.
Compared to Theorem~2.3.8 in \cite{W1}, the polyconvexity assumption is weakened to regularity. Moreover, alternative to the assumption $B\subset O_{-\eps_0}$ (with $O_{-\eps}:=((O^c)_\eps)^c$ being the inner $\eps$-parallel set of $O$), also the assumption $\bd B\subset K$ allows the same conclusion.
We point out that no integrability assumptions for the curvature measures are required for this statement.

\begin{prop} \label{prop:w1-2-3-8}
Let $K\subset\R^d$ be a regular self-similar set satisfying OSC, $O$ some feasible open set of $K$, $D:=\dim_M K$ and $k\in\{0,\ldots,d\}$.
Suppose
there exist some constants $\eps_0,\beta>0$ and some open set $B\subset O$, satisfying at least one of the conditions $\bd B\subset K$ or $B\subset O_{-\eps_0}$, such that, for almost all $\eps\in(r_{\min}\eps_0,\eps_0]$,
\[\gC_k^\var(K_\eps, B\times S^{d-1})\ge\beta.\]
Then, for almost all $\eps<\eps_0$,
\[\eps^{D-k} \gC_k^\var(K_\eps)\ge c ,\]
where $c:=\beta \eps_0^{D-k} r_{\min}^D>0\,$.
In particular, it follows $\tilde s_k(K)\geq\dim_M K$.
\end{prop}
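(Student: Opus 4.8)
The plan is a self-similar scaling argument in the spirit of \cite[Theorem~2.3.8]{W1}: decompose $K$ along a cut set, transport the hypothesis to each piece via the motion covariance, $k$-homogeneity and local determinacy of the curvature-direction measures, and sum up using the similarity-dimension identity $\sum_i r_i^D=1$. The range $\eps\in(r_{\min}\eps_0,\eps_0]$ is dealt with directly: there the hypothesis gives $\gC_k^\var(K_\eps)\ge\gC_k^\var(K_\eps,B\times S^{d-1})\ge\beta$, and an elementary estimate shows $\eps^{D-k}\ge r_{\min}^D\eps_0^{D-k}$ on this interval (distinguishing $D\ge k$ from $D<k$ and using $r_{\min}<1$), so that $\eps^{D-k}\gC_k^\var(K_\eps)\ge c$.

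Now fix $\eps\in(0,r_{\min}\eps_0)$, set $\rho:=\eps/(r_{\min}\eps_0)\in(0,1)$, and introduce the cut set
\[
  \Sigma(\rho):=\{\omega=\omega_1\cdots\omega_n\in\Sigma^*_N:\ r_\omega<\rho\le r_{\omega|(n-1)}\},
\]
with the convention $r_{\omega|0}=1$. As is standard, $\Sigma(\rho)$ is a finite antichain such that every infinite word has exactly one prefix in it, one has $r_\omega\in[\rho r_{\min},\rho)=[\eps/\eps_0,\eps/(r_{\min}\eps_0))$ for all $\omega\in\Sigma(\rho)$ (equivalently $\eps/r_\omega\in(r_{\min}\eps_0,\eps_0]$), and $\sum_{\omega\in\Sigma(\rho)}r_\omega^D=1$ (summing the relation $\sum_i r_i^D=1$ down the tree to the cut set). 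Moreover, two words of an antichain first differ at some coordinate where the corresponding first-level images of $O$ are already disjoint by OSC, so the sets $S_\omega(O)$, $\omega\in\Sigma(\rho)$, are pairwise disjoint, and hence so are the sets $S_\omega(B)\subset S_\omega(O)$.

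The heart of the matter is the scaling identity
\[
  \gC_k^\var(K_\eps,\ S_\omega(B)\times S^{d-1})=r_\omega^k\,\gC_k^\var(K_{\eps/r_\omega},\ B\times S^{d-1}),\qquad\omega\in\Sigma(\rho).
\]
Since $S_\omega$ is a similarity with ratio $r_\omega$ and $S_\omega^{-1}(K_\eps)=(S_\omega^{-1}K)_{\eps/r_\omega}$, motion covariance and $k$-homogeneity of $\gC_k$ reduce this to the set identity that $(S_\omega^{-1}K)_{\eps/r_\omega}$ and $K_{\eps/r_\omega}$ coincide on an open neighbourhood of $B$; local determinacy then finishes it (regularity is inherited, as $K_{\eps/r_\omega}$, and thus its similar image $(S_\omega K)_\eps$, is regular for a.e.\ $\eps$). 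The inclusion ``$\supseteq$'' is immediate from $K\subseteq S_\omega^{-1}K$; for ``$\subseteq$'', a point of $B$ having a point of $S_\omega^{-1}K$ within distance $\eps/r_\omega\le\eps_0$ must in fact have one in $K$. This is exactly where the alternative conditions on $B$ enter: if $B\subset O_{-\eps_0}$, such a nearest point lies in $S_\omega(O)$, whose only intersection with $K$ comes, by OSC, from $S_\omega(K)$; if $\bd B\subset K$, one treats separately the part of $B$ within $\eps/r_\omega$ of $\bd B\subseteq K$ and the part bounded away from $\bd B$, on each of which the coincidence is clear. I expect this local-coincidence step to be the main obstacle, since it is the one requiring the careful OSC bookkeeping that the two alternative hypotheses on $B$ are tailored to accommodate.

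Granting the scaling identity, disjointness of the $S_\omega(B)$ and applicability of the hypothesis at the parameters $\eps/r_\omega\in(r_{\min}\eps_0,\eps_0]$ yield, for a.e.\ $\eps$,
\begin{align*}
  \gC_k^\var(K_\eps) &\ge \sum_{\omega\in\Sigma(\rho)}\gC_k^\var(K_\eps,\ S_\omega(B)\times S^{d-1})\\
  &= \sum_{\omega\in\Sigma(\rho)}r_\omega^k\,\gC_k^\var(K_{\eps/r_\omega},\ B\times S^{d-1})\ \ge\ \beta\sum_{\omega\in\Sigma(\rho)}r_\omega^k .
\end{align*}
(The qualifier ``a.e.\ $\eps$'' is needed because the hypothesis and the definedness of the measures hold only for a.e.\ parameter; as only countably many scalings $r_\omega$ occur and each maps a null set to a null set, the exceptional $\eps$ form a null set.) Writing $r_\omega^k=r_\omega^D\,r_\omega^{k-D}$ and using $r_\omega\in[\eps/\eps_0,\eps/(r_{\min}\eps_0))$ to bound $r_\omega^{k-D}\ge r_{\min}^D(\eps/\eps_0)^{k-D}$ in both cases $k\ge D$ and $k<D$, together with $\sum_{\omega\in\Sigma(\rho)}r_\omega^D=1$, gives $\gC_k^\var(K_\eps)\ge\beta r_{\min}^D(\eps/\eps_0)^{k-D}=c\,\eps^{k-D}$, i.e.\ $\eps^{D-k}\gC_k^\var(K_\eps)\ge c$ for a.e.\ $\eps<\eps_0$. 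Finally, $\tilde s_k(K)\ge\dim_M K$ is immediate from \eqref{eq:tilde-sk-def1}: for any $s<D$ one has $\eps^{s-k}\gC_k^\var(K_\eps)\ge c\,\eps^{s-D}\to\infty$ as $\eps\to0$, so the essential $\limsup$ there is infinite and $\tilde s_k(K)\ge s$.
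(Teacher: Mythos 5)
Your proof is correct and follows essentially the same route as the paper's: the same cut-set decomposition, the same locality identity $K_\eps\cap S_\omega B=(S_\omega K)_\eps\cap S_\omega B$ with the same two-case treatment of the alternative hypotheses on $B$, and the same scaling step. The only (cosmetic) difference is that you close the estimate with the exact antichain identity $\sum_{\omega}r_\omega^D=1$ where the paper uses the cardinality bound $\card \Sigma(r)\ge r^{-D}$.
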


\begin{proof}
Due to the new hypothesis, the proof of Theorem~2.38 in \cite{W1} needs some adaptations, although the essential argument carries over. Let $S_1,\ldots,S_N$ be similarities generating $K$.
Since for each $r>0$ the sets $S_\omega O$, $\omega\in\Sigma(r):=\{\sigma\in\Sigma_N^*: r_\sigma<r\le r_\sigma r_{\min}^{-1}\}$, are pairwise disjoint, the same holds for their subsets $S_\omega B$ and so, for regular $\eps>0$,
\[ \gC_k^\var(K_\eps)\ge \sum_{\omega\in\Sigma(r)} \gC_k^\var(K_\eps, S_\omega B\times S^{d-1}).\]
Fix some regular $\eps<\eps_0$ and set $r:=r_{\min}^{-1}\eps_0^{-1}\eps$. First, we claim that for each $\omega\in\Sigma(r)$,
\begin{align} \label{eq:loc-in-B}
  K_\eps\cap S_\omega B= (S_\omega K)_\eps\cap S_\omega B.
\end{align}
For a proof in the case when $\bd B\subset K$, let $x\in K_\eps\cap S_\omega B$. Then there exists a point $y\in K$ such that $d(x,y)=d(x,K)\leq\eps$. Since $\bd B\subset K$, one has either $y\in S_\omega \bd B \subset S_w K$ or $y\in S_\omega B\cap K$. The latter also implies $y\in S_\omega K$, since otherwise one would have $y\in S_\sigma K$ for some $\sigma\in\Sigma(r)$, $\sigma\neq \omega$ and so $S_\sigma K\cap S_\omega O\neq \emptyset$, which violates OSC.
Thus in both cases $y\in S_\omega K$, which implies $x\in (S_\omega K)_\eps$, proving one inclusion in \eqref{eq:loc-in-B} in the case when $\bd B\subset K$. The reverse inclusion is obvious, since $S_\omega K\subset K$. In case $B\subset O_{-\eps_0}$, equation~\eqref{eq:loc-in-B} follows from the relation
$K_\eps\cap (S_\omega O)_{-\eps}= (S_\omega K)_\eps\cap (S_\omega O)_{-\eps}$ for each $\eps\leq\eps_0$.
Equation \eqref{eq:loc-in-B} allows to use the locality property, which together with the scaling property of $\gC_k^\var$ yields
\[
\gC_k^\var(K_\eps, S_\omega B\times S^{d-1})=\gC_k^\var(S_\omega(K_{\eps r_\omega^{-1}}), S_\omega B\times S^{d-1})=r_\omega^k \gC_k^\var(K_{\eps r_\omega^{-1}}, B\times S^{d-1}).
\]
Since, by the choice of $r$, we have $\eps r_\omega^{-1} \in (r_{\min}\eps_0,\eps_0]$, the hypothesis implies that $\gC_k^\var(K_{\eps r_\omega^{-1}}, B\times S^{d-1})\ge\beta$ and
therefore,
\[
\gC_k^\var(K_\eps)\ge \sum_{\omega\in\Sigma(r)} r_\omega^k \gC_k^\var(K_{\eps r_\omega^{-1}}, B\times S^{d-1})\ge \sum_{\omega\in\Sigma(r)} (\eps_0^{-1} \eps)^k \beta=\beta \eps_0^{-k} \eps^k \card{\Sigma(r)}.
\]
Recalling that $\card{\Sigma(r)}\ge r^{-D}=r_{\min}^{D}\eps_0^{D}\eps^{-D}$, see e.g.\ \cite[eq.~(5.1.5)]{W1},
we obtain
\[ \gC_k^\var(K_\eps)\ge \beta r_{\min}^{D} \eps_0^{D-k}  \eps^{-D+k}=c \eps^{k-D}\]
as claimed, completing the proof of Proposition~\ref{prop:w1-2-3-8}.
\end{proof}

As an application of Proposition~\ref{prop:w1-2-3-8}, we will now formulate two simple geometric conditions each of which ensures generic behaviour of the $0$-th scaling exponent.

\begin{thm} \label{thm:conn-compl}
Let $K\subset\R^{d}$ be a regular self-similar set satisfying OSC with $\dim_M K<d$. Suppose that the complement $K^c$ of $K$ has a bounded connected component.
Then $\tilde s_{0}(K)\geq \dim_M K$.
\end{thm}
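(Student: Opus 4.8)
The plan is to reduce everything to Proposition~\ref{prop:w1-2-3-8} applied with $k=0$: it suffices to exhibit a feasible open set $O$ of $K$, an open set $B\subseteq O$ with $\bd B\subseteq K$, and constants $\eps_0,\beta>0$ such that $\gC_0^\var(K_\eps,B\times S^{d-1})\ge\beta$ for almost all $\eps\in(r_{\min}\eps_0,\eps_0]$; the proposition then yields $\eps^D\gC_0^\var(K_\eps)\ge c>0$ for a.a.\ $\eps<\eps_0$ and hence $\tilde s_0(K)\ge\dim_M K$. I would take $B$ to be a bounded connected component $G$ of $K^c$ (which exists by hypothesis): it is open and bounded with $G\cap K=\emptyset$ and $\bd G\subseteq\bd K\subseteq K$, so the requirement $\bd B\subseteq K$ is automatic. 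Two points then remain: \emph{(a)} that some such $G$ lies inside a feasible open set of $K$, and \emph{(b)} that $G$ carries a uniformly positive amount of $\gC_0$-mass at every small regular scale.

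Part (b) is the more routine half. Write $\rho:=\sup_{y\in G}\dist(y,\bd G)>0$ for the inradius of $G$. Since $K\subseteq G^c$ and $\bd G\subseteq K$, one has $\dist(y,K)=\dist(y,G^c)$ for every $y\in G$, so $K_\eps^c\cap G$ is exactly the open inner $\eps$-parallel set of $G$, which is nonempty for $0<\eps<\rho$. For a regular $\eps\in(0,\rho)$ set $A_\eps:=\overline{K_\eps^c\cap G}$. Then $A_\eps$ is compact, $\bd A_\eps\subseteq\{y:\dist(y,G^c)=\eps\}\subseteq G$, and on the open set $G$ one has $A_\eps\cap G=\widetilde{K_\eps}\cap G$ (using $\overline{K_\eps^c}\cap G=\overline{K_\eps^c\cap G}\cap G$); since $\widetilde{K_\eps}$ has positive reach and positive reach is a local-to-global property for compacta, $A_\eps$ has positive reach as well. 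Using the reflection relating $K_\eps$ and $\widetilde{K_\eps}$, the local determinacy of curvature(-direction) measures, and the fact that the curvature of $A_\eps$ is concentrated on $\bd A_\eps\subseteq G$, we obtain
\[
 \gC_0^\var(K_\eps,G\times S^{d-1})=\gC_0^\var(\widetilde{K_\eps},G\times S^{d-1})=\gC_0^\var(A_\eps,G\times S^{d-1})=\gC_0^\var(A_\eps)\ge1,
\]
the last step being the compact case of Lemma~\ref{lem:C0var}. Thus (b) holds with $\beta=1$ for all regular $\eps<\rho$.

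The main obstacle is part (a): producing a feasible open set containing a bounded complementary component — equivalently, building a nontrivial compatible self-similar tiling, which is precisely Theorem~\ref{thm:tiling}. The natural candidate is $O:=\intr(U^c)$, where $U$ denotes the union of the unbounded components of $K^c$, so that $U^c$ — the union of $K$ with all bounded components of $K^c$ — is the ``filled-in'' version of $K$. By hypothesis there is a bounded component $G$ of $K^c$, and $\emptyset\ne G\subseteq\intr(U^c)=O$. From $K^c=\bigcap_iS_i(K^c)$ one checks that every unbounded component of $K^c$ is contained in $S_i(U)$, hence $U\subseteq S_i(U)$, hence $S_i(U^c)\subseteq U^c$, hence $\bigcup_iS_i(O)\subseteq O$. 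What is not obvious — and, presumably, where the standing hypothesis $\dim_M K<d$ enters — is the disjointness $S_i(O)\cap S_j(O)=\emptyset$ for $i\ne j$, i.e.\ that $O$ is genuinely a feasible open set of $K$; this verification (which simultaneously establishes Theorem~\ref{thm:tiling}) is the technical heart of the argument and the step I expect to be hardest. Granting it, one takes $B:=G\subseteq O$ and $\eps_0:=\rho/2$, invokes part (b), and concludes from Proposition~\ref{prop:w1-2-3-8} (with $k=0$, $\beta=1$) that $\eps^D\gC_0^\var(K_\eps)\ge c>0$ for a.a.\ $\eps<\eps_0$, whence $\tilde s_0(K)\ge\dim_M K$.
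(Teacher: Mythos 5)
Your part (b) is essentially the paper's argument: the set $A_\eps=\overline{G\setminus K_\eps}=\widetilde{K_\eps}\cap \overline{G}$ is a compact positive-reach piece of $\widetilde{K_\eps}$ separated from the rest, and the compact case of Lemma~\ref{lem:C0var} gives $\gC_0^\var(K_\eps,G\times S^{d-1})=\gC_0^\var(A_\eps)\ge 1$ for regular $\eps$ below the inradius of $G$. That half is fine.

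The gap is in part (a), and it is a real one: you reduce the theorem to constructing a feasible open set $O=\intr(U^c)$ containing a bounded complementary component, you correctly identify the disjointness $S_i(O)\cap S_j(O)=\emptyset$ as the crux, and you then leave it unproved. This is not a step one can wave through -- under OSC the pieces $S_i(K)$ may intersect, and it is not clear that the ``filled-in'' sets $S_i(U^c)$ have disjoint interiors. Moreover the logical order is backwards relative to the paper: Theorem~\ref{thm:tiling} (existence of a compatible tiling, i.e.\ of a feasible open set whose generator has boundary in $K$) is \emph{deduced} in the paper from the argument inside Theorem~\ref{thm:conn-compl}, so you cannot invoke it here without circularity. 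The device you are missing is much cheaper and avoids constructing a new feasible open set altogether: fix \emph{any} strong feasible open set $O$ (Schief's theorem), pick $\omega\in\Sigma_N^*$ with $\conv(\varphi_\omega(K))\subset O$; since the bounded component $B$ of $K^c$ satisfies $B\subset\conv(K)$, the image $\varphi_\omega(B)$ is an open subset of $O$ with $\bd\varphi_\omega(B)=\varphi_\omega(\bd B)\subset K$. Because $\dim_M K<d$ forces $\lambda_d(K)=0$, the set $K$ cannot cover the open set $\varphi_\omega(B)$, so $\varphi_\omega(B)$ contains at least one bounded connected component $B'$ of $K^c$, and $B'\subset O$. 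One then runs your part (b) on $B'$ instead of $G$ and applies Proposition~\ref{prop:w1-2-3-8} with $\beta=1$. With that substitution the proof closes; note also that the hypothesis $\dim_M K<d$ enters exactly at this point (to guarantee $K$ has empty interior), not in the disjointness issue you anticipated.
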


\begin{proof}[Proof of Theorem~\ref{thm:conn-compl}]
 Let $\varphi_1,\ldots,\varphi_N$ be similarities generating $K$. Fix a feasible open set $O$ for the SOSC.  First we claim that the existence of a bounded connected component $B$ of $K^c$ implies the existence of such a component $B'$ with $B'\subset O$.
  To see this, let $x\in K\cap O$. Choose $\omega\in\Sigma^*_N$ such that $\conv(\varphi_\omega(K))\subset O$. Since $B\subset \conv(K)$, we have $\varphi_\omega(B)\subset O$ and $\bd\varphi_\omega(B)=\varphi_\omega(\bd B)\subset \varphi_\omega(K)\subset K$. Hence $\varphi_\omega(B)$ is open and has its boundary contained in $K$, but it is not necessarily a connected component of $K^c$. However, since $K$ cannot fill the whole open set $B$, there must be at least one connected component $B'$ of $K^c$ contained in $\varphi_\omega(B)$ and thus in $O$ proving the claim.

 By the above claim it is justified to assume that $B\subset O$ in the sequel. The OSC implies that $\varphi_i(B)\cap \varphi_j(B)=\emptyset$ for $i\neq j$.
Let $\rho=\rho(B)$ be the inradius of the set $B$ and let $\eps\in(0,\rho)$ be a regular value for $K$. Then the closed complement $\widetilde{K_\eps}$ of $K_\eps$ has positive reach. It is easy to see that the set $A:=\cl{B\setminus K_\eps}=\widetilde{K_\eps}\cap B$ is a subset of $\widetilde{K_\eps}$ that is well separated from the rest of $\widetilde{K_\eps}$ (with distance at least $2\eps$).
Hence $A$ has positive reach. Since $B\cap K=\emptyset$ and $\bd B\subset K$, we have $K_\eps\cap B=(\bd B)_{\eps}\cap B$
and thus, by the locality property,
$\gC_0^\var(K_\eps, B\times S^{d-1})=\gC_0^\var((\bd B)_{\eps}, B\times S^{d-1})$. By the reflection principle, the latter expression equals
$\gC_0^\var(A, B\times S^{d-1})=\gC_0^\var(A)$. Since $A$ is a compact set with positive reach, we can apply the second part of Lemma~\ref{lem:C0var} to infer
$
\gC_0^\var(A)\geq 1.
$
Fixing some $\eps_0<\rho$, we have therefore
$$
\gC_0^\var(K_\eps, B\times S^{d-1})\ge 1
$$
for all regular values $\eps\in(0,\eps_0]$.  Hence, the hypotheses of Proposition~\ref{prop:w1-2-3-8} are satisfied and we conclude $\tilde s_0(K)\geq\dim_M K$, which completes the proof of Theorem~\ref{thm:conn-compl}.
\end{proof}

\begin{lem}\label{bigpieces}
Let $K$ be a regular self-similar set in $\R^{d}$ satisfying OSC such that $\tilde s_{0}(K)<\dim_M K$. Then there is an open set $U\subset\R^{d}$ and $\delta>0$ such that $U\cap K\not=\emptyset$
and every connected component of $K$ that intersects $U$ has diameter at least $\delta$.
Moreover, $U$ can be chosen as a subset of some feasible open set for $K$.
\end{lem}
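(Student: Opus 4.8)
The plan is to prove the contrapositive: assuming that no pair $(U,\delta)$ with the stated properties exists, I will deduce $\tilde s_0(K)\geq\dim_M K$, contradicting the standing hypothesis $\tilde s_0(K)<\dim_M K$. The first step is a harmless localization. If some open set $U$ meets $K$ and every connected component of $K$ meeting $U$ has diameter $\geq\delta$, then, after shrinking $U$ to a ball $B(x,\rho)\subset U$ centred at a point $x\in U\cap K$ and replacing it by a cylinder set $\varphi_\omega(O)\subset B(x,\rho)$ — which exists because the sets $\varphi_\omega(K)$ accumulate at $x$ with diameters tending to $0$, while $\varphi_\omega(O)\subset O$ and $\varphi_\omega(O)\cap K\neq\emptyset$ — one obtains a set with the same property that additionally lies inside a fixed feasible open set $O$ (the family of components meeting $U$ only shrinks along the way). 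Hence it suffices to derive a contradiction from the hypothesis $(\ast)$: for every open $U$ with $U\cap K\neq\emptyset$ and every $\delta>0$ there is a connected component $C$ of $K$ with $\diam C<\delta$ and $C\cap U\neq\emptyset$.

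So assume $(\ast)$ and fix a feasible open set $O$ with $O\cap K\neq\emptyset$ (available since OSC implies SOSC, \cite{S}), a point $x_0\in O\cap K$, and $\rho>0$ with $\overline{B(x_0,\rho)}\subset O$. Applying $(\ast)$ with $U=B(x_0,\rho/2)$ and $\delta=\rho/4$ produces a connected component $C$ of $K$ with $C\subset B(x_0,3\rho/4)\subset O$ and $\diam C<\rho/4$. Since in a compact metric space every connected component coincides with its quasi-component, a standard compactness argument yields a relatively clopen subset $V$ of $K$ with $C\subset V$, with $V$ contained in a ball of radius $\rho/4$ inside $O$, and with $\sigma:=\dist(V,K\setminus V)>0$. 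Setting $\eps_0:=\tfrac13\min\{\sigma,\dist(V,\partial O)\}>0$ and letting $B$ be the open $\eps_0$-neighbourhood of $V$, one checks $B\subset O$, $B\subset O_{-\eps_0}$, and $\dist(B,K\setminus V)\geq 2\eps_0$.

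The heart of the argument is then the lower bound $\gC_0^\var(K_\eps,B\times S^{d-1})\geq 1$ for every regular $\eps\in(0,\eps_0)$. For such $\eps$ one has $K_\eps\cap B=V_\eps\cap B$ (as $B$ stays at distance $>2\eps$ from $K\setminus V$), so the locality property gives $\gC_0^\var(K_\eps,B\times S^{d-1})=\gC_0^\var(V_\eps,B\times S^{d-1})$, and since $V_\eps\subset B$ — hence $\partial V_\eps\subset B$ — the latter equals $\gC_0^\var(V_\eps)$. For regular $\eps$ the closed complement $\widetilde{V_\eps}$ coincides with $\widetilde{K_\eps}$ on a neighbourhood of $\partial V_\eps$ and is therefore a set of positive reach whose own closed complement $V_\eps$ is bounded; thus $\gC_0(V_\eps,\cdot)$ is well defined, and Lemma~\ref{lem:C0var} together with the reflection principle gives $\gC_0^\var(V_\eps)=\gC_0^\var(\widetilde{V_\eps})\geq 1$. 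Since $K$ is a regular self-similar set, this bound holds for almost all $\eps\in(r_{\min}\eps_0,\eps_0]$, and with $k=0$, $\beta=1$, and the open set $B\subset O$ satisfying $B\subset O_{-\eps_0}$, Proposition~\ref{prop:w1-2-3-8} yields $\tilde s_0(K)\geq\dim_M K$ — the desired contradiction. (If $K$ happens to be connected, then $(\ast)$ fails outright and the lemma is trivial, with $\delta=\diam K>0$, so no separate case is needed.)

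The opening localization and the passage from the small component $C$ to the clopen cluster $V$ are routine. The step I expect to demand the most care is the bookkeeping around the auxiliary parallel set $V_\eps$: one must verify that for almost every $\eps$ the set $\widetilde{V_\eps}$ genuinely has positive reach and that $\partial V_\eps$ is a Lipschitz manifold — both of which should follow by localizing the regularity of $K$ to the neighbourhood $B$ of $\partial V_\eps$, where $V_\eps$ and $K_\eps$ agree — while keeping the three radii $\sigma$, $\dist(V,\partial O)$, and $\eps$ consistent, so that $B$ is simultaneously separated from $K\setminus V$ by more than $2\eps$, contained in $O_{-\eps_0}$, and large enough to contain $\partial V_\eps$.
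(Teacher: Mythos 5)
Your proof is correct, and its skeleton --- isolate a small ``island'' of $K$ inside a feasible open set, show that its $\eps$-parallel body carries total curvature at least $1$ via Lemma~\ref{lem:C0var} and the reflection principle, and feed this uniform lower bound into Proposition~\ref{prop:w1-2-3-8} to force $\tilde s_0(K)\geq\dim_M K$ --- is exactly the skeleton of the paper's argument. Where you genuinely diverge is in how the island is produced. The paper builds it combinatorially: it defines the equivalence $\omega\approx_n\sigma$ on words of length $n$ via chains of intersecting cylinder sets, calls the resulting unions ``clusters of level $n$'', notes that distinct clusters are separated by a positive distance, proves that the decreasing sequence of clusters containing a given small component $C$ converges to $C$ in the Hausdorff metric, and then uses a slightly inflated cluster as the set $B$ in Proposition~\ref{prop:w1-2-3-8}. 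You instead invoke the general topological fact that in a compact metric space components coincide with quasi-components, which by compactness yields a relatively clopen $V\supset C$ inside any prescribed neighbourhood, automatically at positive distance $\sigma$ from $K\setminus V$. Your route is shorter and uses self-similarity only at the very end; the paper's route is more hands-on and its cluster decomposition amounts to a concrete proof of the quasi-component fact for IFS attractors. Two small points to pin down in a final write-up: (a) to guarantee $K\setminus V\neq\emptyset$, choose $\rho<\diam K$ (or observe that the case $V=K$ makes the curvature bound trivial, since $K_\eps$ is compact); (b) the positive reach of $\widetilde{V_\eps}$, which you rightly flag, follows because $\widetilde{K_\eps}\subset\widetilde{V_\eps}$ and the two sets coincide within distance $\eps_0$ of $\partial V_\eps$, so every point of $V_\eps^\circ$ sufficiently close to $\widetilde{V_\eps}$ has the same unique foot point on both sets --- the same localization the paper performs for the set $A=\widetilde{K_\eps}\cap B$ in the proof of Theorem~\ref{thm:conn-compl}.
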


\begin{proof}
Let $K$ be generated by $\varphi_{1},...,\varphi_{N}.$
For every $n\in \en$, we define an equivalence relation $\approx_{n}$ on $\Sigma_N^{n}$ as follows: For $\omega,\sigma\in\Sigma_N^{n}$, $\omega\approx_{n} \sigma$ if and only if
there are $p\in\N$ and words $\omega=\omega_{1},\omega_{2},...,\omega_{p}=\sigma\in\Sigma_N^{n}$ such that $\varphi_{\omega_{l}}(K)\cap\varphi_{\omega_{l+1}}(K)\not=\emptyset$ for $l=1,..,p-1.$

Let $\Gamma_{i}^{n},$ $i=1,...,t_{n}$ be the equivalence classes of $\approx_{n}.$
 We will refer to the union sets $C^{n}_{i}=\bigcup_{\omega\in\Gamma^{n}_{i}}\varphi_{\omega}(K)$ as the \emph{clusters of level $n$}. Set $\eps_{n}=\frac{1}{3}\min_{j\not=i}\dist(C^{n}_{i},C^{n}_{j})>0.$
Then, by Lemma~\ref{lem:C0var}, we have, for each regular $0<\eps<\eps_{n}$ and each $i$,
\begin{align}\label{eq:cluster_curv}
\gC^{\var}_{0}(K_{\eps},(C^{n}_{i})_{\eps}\times S^{d-1})\geq 1.
\end{align}
Now, by SOSC, there is a feasible open set $O$, $x\in K$ and $\delta>0$ such that $B(x,3\delta)\subset O.$
Define $U=B(x,\delta).$
Suppose for contradiction that there is a connected component $C$ of $K$ such that $\diam C<\delta$ and $C\cap U\not=\emptyset.$
Then $C\subset B(x,3\delta)\subset O.$
Let $C^n(C)$ denote the cluster of level $n$ containing the set $C$. (It is clear that any connected subset of $K$ must be entirely contained in one cluster.) The clusters $C^n(C), n\in\N$ form a monotone decreasing sequence of sets. Let $L:=\bigcap_n C^n(C)$ be the limit set. Since $C\subseteq C^n(C)$ for every $n$, we have $C\subseteq L$.

We claim that $L$ is connected, which implies at once that $L=C$, since $C$ is a connected component of $K$.
For a proof of the claim, let $y,z\in L$ and $\eps>0$. Then we can find a level $n=n(\eps)$ such that the cylinder sets $\varphi_\omega(K)$ of level $n$ contained in the cluster $C^n(C)$ have diameters less than $\eps$. By definition of the cluster, there are points $y=x_0, x_1,\ldots, x_{p(n)}=z\in C^n(C)$ such that $d(x_i,x_{i+1})<\eps$ for $i=0,\ldots,p(n)-1$. Since $\eps>0$ and $y,z\in L$ were arbitrary, $L$ is connected.

We conclude that, as $n\to\infty$, the sets $C^n(C)$ converge to $C$ in the Hausdorff metric. Hence there exists some level $m$ such that $C^m(C)\subset B(x,3\delta)$. Put $\eps_{0}:=\min\{\eps_{m},\frac{1}{3}\dist(C^{m}(C),\partial O)\}.$
Then, applying Proposition~\ref{prop:w1-2-3-8} to $B:=(C^{m}(C))_{2\eps_{0}}\subset U$, $\beta=1$ and $\eps_{0}$ as above, we obtain that $\tilde s_{0}(K)\geq\dim_M K$,
which contradicts the assumptions.
\end{proof}

\begin{cor} \label{cor:disconn}
Let $K$ be a regular self-similar set in $\er^{d}$ satisfying OSC.
If $K$ is totally disconnected, then $\tilde s_{0}(K)\geq\dim_M K$.
\end{cor}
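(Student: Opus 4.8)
The plan is to deduce this corollary from Lemma~\ref{bigpieces} by contraposition. Suppose, for contradiction, that $K$ is totally disconnected but $\tilde s_0(K)<\dim_M K$. Then Lemma~\ref{bigpieces} provides an open set $U$ with $U\cap K\neq\emptyset$ and a constant $\delta>0$ such that every connected component of $K$ meeting $U$ has diameter at least $\delta$. But $K$ totally disconnected means that its connected components are exactly the singletons $\{x\}$, $x\in K$, each of which has diameter $0<\delta$. Picking any point $x\in U\cap K$, its component $\{x\}$ meets $U$ and has diameter $0$, contradicting the conclusion of the lemma. Hence $\tilde s_0(K)\geq\dim_M K$.

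There is essentially no obstacle here; the work has already been done in Lemma~\ref{bigpieces}, and the corollary is just the observation that total disconnectedness is the most extreme violation possible of the property ``components meeting $U$ are uniformly large.'' The only minor point worth stating explicitly is why $U\cap K$ is nonempty guarantees we can find such an $x$: this is immediate from the lemma's assertion $U\cap K\neq\emptyset$. I would keep the proof to two or three sentences.

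A brief alternative remark: one could also observe that Corollary~\ref{cor:disconn} follows in the same spirit directly from Proposition~\ref{prop:w1-2-3-8} and Lemma~\ref{lem:C0var}, since total disconnectedness makes every cluster $C_i^n$ (in the notation of the proof of Lemma~\ref{bigpieces}) arbitrarily small, so that one can always locate a small compact piece of positive reach inside a feasible open set and apply the scheme; but routing through Lemma~\ref{bigpieces} is cleanest and avoids repeating that construction.
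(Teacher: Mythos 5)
Your proof is correct and is exactly the argument the paper intends: the corollary is stated immediately after Lemma~\ref{bigpieces} precisely because it follows from it by contraposition, since a totally disconnected set has only singleton components, which violate the uniform lower diameter bound $\delta>0$ at any point of $U\cap K$. Nothing further is needed.
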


\begin{rem} \label{rem:cbc}
  Under additional assumptions on the self-similar set $K$ such as polyconvexity of the parallel sets or the curvature bound condition \eqref{eq:cbc} one has $s_k(K)\leq\dim_M K$ and $\tilde s_k(K)\leq\dim_M K$. Thus, under any of those assumptions, one gets the equality $\tilde s_0(K)=\dim_M K$ in  Corollary~\ref{cor:disconn} as well as in Theorem~\ref{thm:conn-compl}.

  Note further that both of these results hold equally with $\tilde s_0(K)$ replaced by $\tilde a_0(K)$, and assuming additionally the integrability condition \eqref{eq:ic} to hold, one has $\tilde a_k(K)\leq\dim_M, K$ which implies again equality in the corresponding statements for $\tilde a_k(K)$.
\end{rem}

\section{Self-similar sets in the plane} \label{sec:plane}

In this section we will prove the following result which characterizes completely degenerate behaviour of the scaling exponents of self-similar sets in $\R^2$ in terms of local flatness, resolving thus Conjecture~\ref{conj} in dimension 2.
\begin{thm} \label{thm:main-R2}
Let $K\subset\er^{2}$ be a self-similar set satisfying OSC.
Then $s_{i}(K)<\dim_M K$ for some $i\in\{0,1\}$ if and only if $K$ is locally flat.
More precisely,
\begin{enumerate}
\item[(i)] $s_1(K)<\dim_M K$ if and only if $K$ is locally $2$-flat (and this happens if and only if $\dim_M K=2$).
\item[(ii)] If $\dim_M K<2$, then ($s_1(K)=\dim_M K$ and) $s_0(K)<\dim_M K$ if and only if $K$ is locally $1$-flat.
\end{enumerate}
\end{thm}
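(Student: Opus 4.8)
The plan is to peel off the parts already covered by earlier results and then treat the one genuinely new case $\dim\aff K=2$, $\dim_M K<2$ by two separate arguments. Assertion (i) is exactly Theorem~\ref{fulldimension} for $d=2$. So from now on assume $\dim_M K<2$; then $K$ is not locally $2$-flat (Theorem~\ref{fulldimension}) and $s_1(K)=\dim_M K$ by \cite[Cor.~3.6]{rw09} (using that $\dim_M K$ and $\dim_S K$ of a self-similar OSC set exist, so that $\udim_S K=\udim_M K=\dim_M K$). Thus ``$K$ locally flat'' reduces to ``$K$ locally $1$-flat'', and it remains to prove $s_0(K)<\dim_M K\iff K$ locally $1$-flat. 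Since $s_0(K)=\tilde s_0(K)$ in the plane by Corollary~\ref{exp=direxp}, I would work throughout with $\tilde s_0$. If $\dim\aff K=1$ the equivalence is precisely Proposition~\ref{embedding fulldimension}, so from now on assume $\dim\aff K=2$.

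For the implication ``$K$ locally $1$-flat $\Rightarrow s_0(K)<\dim_M K$'': by Corollary~\ref{flatremark} all nondegenerate segments appearing in the flat cubes of $K$ are parallel; fix coordinates so that they are vertical. The first step is to show that every generator $\varphi_i$ maps the vertical direction to itself: for a vertical segment $S$ inside a flat cube $C\subset O$ (chosen as in Proposition~\ref{prop:one-point-flat}), $\varphi_i(S)\subset\varphi_i(C)\cap K=\varphi_i(C\cap K)$ again lies in a flat cube, hence is vertical by Corollary~\ref{flatremark}. Consequently each $\varphi_i$ acts coordinatewise, $\varphi_i(x,y)=(\pm r_ix+a_i,\pm r_iy+b_i)$, so $Q:=\pi_1(K)$ and $\pi_2(K)$ are self-similar subsets of $\R$ with $K\subset Q\times\pi_2(K)$, and $\pi_2(K)$ has nonempty interior (it contains an interval since $K$ contains a vertical segment). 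Moreover a flat cube gives $C\cap K=\tilde Q_0\times[\text{interval}]\subset K$ with $\tilde Q_0$ a similar copy of the horizontal factor, hence $\udim_M\tilde Q_0\le\dim_M K-1$, and Proposition~\ref{globalflat} with a coordinatewise $\psi$ then gives, after normalization, $K\subset Q\times[0,1]$ with $\udim_M Q\le\dim_M K-1$. I would finish by decomposing $K_\eps^2$ along the complementary intervals of $Q$, in the spirit of Proposition~\ref{subsets_of_R}: the vertical parts of $\partial K_\eps^2$ carry no curvature, so the total curvature concentrates near the $\sim\eps^{-\udim_M Q}$ columns of $Q$ at scale $\eps$ (together with the complementary intervals shorter than $2\eps$), and within each column it is bounded by a constant plus a one-dimensional fractal-string term for $\pi_2(K)$ of order $\eps^{-\gamma}$, where $\gamma=\udim_M(\partial_\R\pi_2(K))<1$ (relative boundary in $\R$) by the one-dimensional analogue of Theorem~\ref{fulldimension}. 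This yields $C_0^\var(K_\eps^2)\le c\,\eps^{-(\udim_M Q+\gamma)}$, whence $s_0(K)\le\udim_M Q+\gamma<(\dim_M K-1)+1=\dim_M K$.

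For the converse: suppose $\tilde s_0(K)<\dim_M K$; I want to deduce that $K$ is locally $1$-flat. By Theorem~\ref{thm:conn-compl} the complement $K^c$ has no bounded component, by Corollary~\ref{cor:disconn} $K$ is not totally disconnected, and by Lemma~\ref{bigpieces} there are a feasible open set $O$, an open $U\subset O$ with $U\cap K\ne\emptyset$, and $\delta>0$ such that every connected component of $K$ meeting $U$ has diameter $\ge\delta$. The geometric claim I would establish is that some cube $U''\subset U$ has the property that $K\cap U''$ is a finite union of parallel chords of $U''$; then $K\cap U''$ is similar to $[0,1]\times P$ and $K$ is locally $1$-flat by Proposition~\ref{prop:one-point-flat}. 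The strategy is to exclude all ``non-flat'' alternatives by producing persistent curvature: a component meeting $U$ cannot be two-dimensional (that would force $\dim_M K=2$); and if $K$ failed to be a union of parallel segments in every small cube near $U$ — i.e.\ if at some location and scale it exhibited a bend, a branch point, or a two-dimensional cluster — then, using that $K^c$ is connected and the components near $U$ are uniformly large, one finds a fixed ball $B$ inside the feasible open set (with $\partial B\subset K$ or $\overline B\subset O$) across which $\partial K_\eps$ spans a fixed arc of directions, so that $\gC_0^\var(K_\eps,B\times S^1)\ge\beta>0$ for almost all small $\eps$ by Lemma~\ref{lem:C0var}; by self-similarity this recurs at every scale and Proposition~\ref{prop:w1-2-3-8} then forces $\tilde s_0(K)\ge\dim_M K$, a contradiction. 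Once $K\cap U''$ is a union of segments, Corollary~\ref{flatremark} (valid since $K$ is not $2$-flat) forces them to be parallel, and shrinking $U''$ in that direction makes $K\cap U''$ a genuine product.

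The main obstacle lies in the two curvature estimates. In the first direction one must justify quantitatively why flatness forces the $\sim\eps^{-\dim_M K}$ microscopic copies of $K$ inside $K_\eps$ to merge vertically into only $\sim\eps^{-\udim_M Q}$ boundary components of bounded turning; a crude count of the copies yields only $s_0(K)\le\dim_M K$, not the strict inequality. In the converse direction the hard part is the \emph{local} step: converting ``$K$ is not locally $1$-flat at any scale'' into a uniform lower bound $\gC_0^\var(K_\eps,B\times S^1)\ge\beta$, which requires ruling out that nearby components of $K$ (large, but possibly accumulating onto one another) fill in and cancel the curvature created by a corner, a branch point or a cluster. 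I expect this second step to be the principal difficulty, since it is exactly where the topological input of Section~\ref{sec:s0} has to be fused with the integral-geometric estimate of Lemma~\ref{lem:C0var}.
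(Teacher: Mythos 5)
Your reductions are the right ones and match the paper: (i) is Theorem~\ref{fulldimension} with $d=2$; for $\dim_M K<2$ one has $s_1(K)=\dim_M K$ by \cite[Cor.~3.6]{rw09}; Corollary~\ref{exp=direxp} lets you pass to $\tilde s_0$; and the case $\dim\aff K=1$ is Proposition~\ref{embedding fulldimension}. But both remaining implications are only sketched, and the two steps you yourself flag as ``the main obstacle'' are precisely the ones that cannot be filled in along the routes you propose. In the forward direction, the per-column bound $C_0^\var(K_\eps)\le c\,\eps^{-(\udim_M Q+\gamma)}$ with $\gamma=\udim_M\partial_{\R}\pi_2(K)$ is not justified: $K$ is only \emph{contained} in $Q\times[0,1]$, it is not a product, so the union of the fibers $K_q$ over an $\eps$-window of $Q$ is merely a subset of $\pi_2(K)$, and a subset can have many more complementary gaps of length $>\eps$ than $\pi_2(K)$ itself (take $\pi_2(K)=[0,1]$ and Cantor-like fibers); local $1$-flatness only supplies product cubes whose size is not controlled relative to $\eps$, so it does not force bounded turning per column. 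The paper's Proposition~\ref{prop:flatness_in R2} proceeds quite differently: Lemma~\ref{rectangles} isolates one generator $\varphi_l$ whose image sits in a rectangle $R(x,\alpha r,\alpha r)$ shielded by flat edges $Q(x,10r,r)\subset K$, Lemma~\ref{distancelemma} shows that $\partial K_\eps$ outside these rectangles cannot ``see'' the shielded part while inside them the boundary is flat, and one concludes $C_0^\var(K_\eps)\le C_0^\var(L_\eps)$ for the sub-attractor $L$ of the remaining $l-1$ maps, which has $\dim_M L<\dim_M K$. Some comparison of this kind with a lower-dimensional set is what your counting argument lacks.

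In the converse direction there is a circularity and two missing arguments. You invoke Corollary~\ref{flatremark} to make the segments in $K\cap U''$ parallel, but that corollary assumes $K$ is already locally $1$-flat --- exactly what is to be proved. The paper instead first shows every nondegenerate component of $O\cap K$ is a segment (Lemma~\ref{lineseg}, where the uniform lower curvature bound comes from an enclosed region $E$, the emptiness of $K^\circ$ to produce boundary points of $K_\alpha$ in $E$ at \emph{every} scale, and a triangle angle estimate --- none of which appears in your sketch of the ``bend/branch/cluster'' step), and then proves parallelism by showing no generator can contain a rotation by an angle $\notin\{0,\pi\}$, trapping a rotated long segment inside $\conv(L\cup L_p)$ for two accumulating parallel segments. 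Finally, ``$K\cap U''$ is a finite union of parallel chords'' is generally false (one gets $C\cap K=P\times[0,1]$ with $P$ infinite and totally disconnected), and the passage from ``union of parallel segments'' to a genuine product needs its own curvature argument: Proposition~\ref{locf2} finds an isolated column whose fiber has a gap and extracts a quarter circle of radius $r$ in $\partial K_r$ at all small $r$, contradicting $s_0(K)<\dim_M K$ via Proposition~\ref{prop:w1-2-3-8}. ``Shrinking $U''$ in that direction'' does not produce this.
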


 The statement (i) is a special case of Theorem~\ref{fulldimension}. Moreover, (i) and (ii) together imply the first assertion in Theorem~\ref{thm:main-R2}. Therefore, it remains to provide a proof of (ii), which is the primary aim of the remainder of this section. For this purpose we focus our attention to self-similar sets $K$ with $\dim_M K<2$. The assertion in parentheses is the special case $d=l=2$ of \eqref{eq:s-exponents2} and holds more generally than just for self-similar sets, cf.~also \cite[Corollary~3.6]{rw09}. The structure of the remaining section (and hence of the proof of the remaining assertion in (ii)) is as follows: First we show that for sets in $\R^2$, the scaling exponents coincide with their directed versions, see Corollary~\ref{exp=direxp}. This will enable us in particular to use the results of the previous section.  Then we concentrate on the if part and show in Proposition~\ref{prop:flatness_in R2} (employing Lemma~\ref{distancelemma} and Lemma~\ref{rectangles}) that local $1$-flatness implies $s_0(K)$ to be strictly smaller than the dimension of $K$. The reverse implication is established in a sequence of statements starting from Lemma~\ref{lineseg}. Then assertion (ii) follows by combining Propositions~\ref{prop:flatness_in R2} and \ref{locf2}. We emphasize that for the results in this section, and in particular for Theorem~\ref{thm:main-R2}, no integrability or curvature bound condition is required. However, for sets not satisfying an assumption of this type, a different `degenerate' behaviour is possible, namely $s_0(K)$ can be strictly larger than the dimensions, cf.\ the Cantor dust example discussed in the introduction.

Our first step is to prove that the scaling exponents coincide with their directed versions.
For the cases $k=1$ and $k=2$, this is immediate from the definitions, since the involved measures essentially coincide with their directed versions. For the $0$-th curvature measure and its directed version, we use the following statement. Recall that $\Nor(A,x)$ denotes the normal cone of a set $A\subset\R^2$ at a point $x\in\bd A$.

\begin{lem} \label{curv=dircurv}
   Let $A\subset\R^2$ be a set with positive reach such that $\bd A$ is bounded and such that, for any $x\in A$, $n\in \Nor(A,x)\setminus\{0\}$ implies $-n\notin \Nor(A,x)$.  
   Then, the variation measures of the curvature measures and the curvature-direction measures coincide, i.e., for $\bullet\in\{+,-,\var\}$ and any Borel set $B\subset\R^2$,
   \begin{align*}
     \gC_0^\bullet(A,B\times S^1)=C_0^\bullet(A,B).
   \end{align*}
\end{lem}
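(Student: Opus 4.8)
The plan is to use the integral representation of $\widetilde C_0(A,\cdot)$ due to Z\"ahle (quoted above) and compare it with a corresponding representation of $C_0(A,\cdot)$, exploiting the fact that under the stated hypothesis the projection $\pi_1\colon \nor A\to\bd A$ is essentially a bijection. First I would recall that for $d=2$, the integral formula gives, for any Borel set $B\subset\R^2$,
\[
\widetilde C_0(A,B\times S^1)=\omega_2^{-1}\int_{\nor A}\ind{B}(x)\,\frac{\kappa(x,n)}{1+\kappa(x,n)^2}\,\H^1(d(x,n)),
\]
where $\kappa(x,n)$ is the single generalized principal curvature at $(x,n)\in\nor A$, while $C_0(A,B)$ is obtained by the same integrand after pushing forward to $\bd A$ via $\pi_1$. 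The key geometric observation is that the hypothesis ``$n\in\Nor(A,x)\setminus\{0\}$ implies $-n\notin\Nor(A,x)$'' forces $\Nor(A,x)$ to be a (closed convex) cone that is not a full line; combined with the fact that $A$ has positive reach (so for $\H^1$-a.e.\ $(x,n)$ the tangent cone $\Tan(\nor A,(x,n))$ is $1$-dimensional and the normal cone is genuinely one-dimensional there), this shows that for $\H^1$-a.e.\ $x\in\bd A$ the fibre $\pi_1^{-1}(x)\cap\nor A$ consists of a single point $(x,n(x))$. Hence $\pi_1$ restricted to (a full measure subset of) $\nor A$ is injective, and in fact bi-Lipschitz onto its image with Jacobian $\sqrt{1+\kappa^2}$ in the $(x,n)\mapsto x$ direction.

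Next I would make the comparison precise. Writing $g(x,n):=\kappa(x,n)/(1+\kappa(x,n)^2)$ for the density appearing in $\widetilde C_0$, and using the coarea/area formula for the (a.e.) injective map $\pi_1\colon\nor A\to\bd A$ together with the computation of its Jacobian $J\pi_1=1/\sqrt{1+\kappa^2}$ (analogous to the computation of $J\pi_2$ in the proof of Lemma~\ref{lem:C0var}), one obtains
\[
\widetilde C_0(A,B\times S^1)=\omega_2^{-1}\int_{B\cap\bd A}\frac{\kappa(x,n(x))}{1+\kappa(x,n(x))^2}\cdot\sqrt{1+\kappa(x,n(x))^2}\;\H^1(dx)
=\omega_2^{-1}\int_{B\cap\bd A}\frac{\kappa(x,n(x))}{\sqrt{1+\kappa(x,n(x))^2}}\;\H^1(dx),
\]
and this last expression is, by the classical representation of $C_0$ for sets of positive reach in $\R^2$ (the $k=0$, $d=2$ case of the same Z\"ahle formula read on $\bd A$), precisely $C_0(A,B)$. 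In other words, $\widetilde C_0(A,\cdot\times S^1)$ and $C_0(A,\cdot)$ have the same (signed) density with respect to $\H^1\restriction\bd A$. Since two signed measures with the same density agree, and the Hahn--Jordan decomposition as well as the total variation are determined pointwise by the sign of this common density, the identities $\widetilde C_0^\bullet(A,B\times S^1)=C_0^\bullet(A,B)$ follow simultaneously for $\bullet\in\{+,-,\var\}$: on the set where $\kappa\ge 0$ both measures equal their positive parts and on the set where $\kappa<0$ both equal (minus) their negative parts, and the total variation is the sum.

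The main obstacle I expect is the measure-theoretic bookkeeping in the change of variables: one must argue that the set of ``bad'' points (where $\Tan(\nor A,(x,n))$ fails to be a line, where $\kappa=\pm\infty$, or where $\pi_1^{-1}(x)$ is not a singleton) is $\H^1$-null in $\nor A$ and projects to an $\H^1$-null set in $\bd A$, so that it contributes nothing to either side. The first part is Z\"ahle's a.e.\ regularity statement; the second part needs the hypothesis on $\Nor(A,x)$ to rule out a positive-length set of points carrying a two-point fibre $\{(x,n),(x,-n)\}$ (which would be exactly the ``locally flat segment'' situation the lemma excludes). Once this null-set control and the Jacobian identity $J\pi_1\cdot J\pi_2=\ldots$ (already implicit in \eqref{valuedet}) are in hand, the rest is the routine area-formula computation sketched above, and the conclusion for all three signs is automatic because the two measures literally share one density.
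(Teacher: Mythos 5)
Your approach has a genuine gap: the change of variables along $\pi_1$ on which everything rests is not available under the hypotheses of the lemma. The assumption that $\Nor(A,x)$ contains no antipodal pair does not prevent $\Nor(A,x)$ from being a two-dimensional convex cone, i.e.\ it does not exclude corners. Take $A$ to be a convex polygon: all hypotheses hold, but the fibre $\pi_1^{-1}(x)\cap\nor A$ over each vertex $x$ is a whole arc of positive $\H^1$-measure in $\nor A$ on which $\kappa=+\infty$, and these arcs carry all of the curvature (on the edges $\kappa=0$, so the integrand vanishes there). Hence the ``bad set'' you propose to discard --- where $\kappa=\pm\infty$ or the fibre is not a singleton --- is \emph{not} $\H^1$-null in $\nor A$; it supports the atomic part of $C_0(A,\cdot)$, which is therefore not absolutely continuous with respect to $\H^1$ on $\bd A$. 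Your displayed identity expressing $\gC_0(A,B\times S^1)$ as an integral of a density over $B\cap\bd A$ is consequently false (for a convex polygon it would give $0$ instead of $\chi(A)=1$), and the concluding step ``two measures with the same density agree'' has no premise to stand on. (There is also a normalization slip: Z\"ahle's density for $d=2$, $k=0$ is $\kappa/\sqrt{1+\kappa^2}$, not $\kappa/(1+\kappa^2)$, and pushing it forward with Jacobian $1/\sqrt{1+\kappa^2}$ would produce the classical density $\kappa$; but this is secondary.)

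The idea you are missing is that no change of variables is needed: it suffices to show that the set where the integrand $\kappa(x,n)/\sqrt{1+\kappa^2(x,n)}$ is negative is a \emph{cylinder}, i.e.\ of the form $(B^-\times S^1)\cap\nor A$. This is exactly where the hypothesis enters: if $\kappa(x,n)<0$, then $n$ must be the unique unit normal at $x$, since a second normal $n'\neq -n$ would, by convexity of $\Nor(A,x)$, force a two-dimensional normal cone on which $\kappa\equiv+\infty$. Setting $B^-$ to be the set of points with a unique normal of negative curvature and $B^+$ its complement, the pair $(B^+\times S^1, B^-\times S^1)$ is a Hahn decomposition of $\gC_0(A,\cdot)$ of product form; projecting, $(B^+,B^-)$ is a Hahn decomposition of $C_0(A,\cdot)$, and the equalities for $+$, $-$ and $\var$ follow at once --- with no control of fibres, Jacobians, or atoms required. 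Your argument could in principle be repaired by splitting $\nor A$ into the part over singleton fibres and the part over arcs (where the integrand is nonnegative) and treating them separately, but that amounts to a more laborious route to the same sign information.
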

Note that for the curvature measures with index $k=1$ and $k=2$ the corresponding statement is trivial.
\begin{proof}
   Recall that for a set $A\subset\R^2$ with positive reach, the generalized principal curvature $\kappa(x,n)$ (in the plane there is only one) is well defined for almost all $(x,n)\in \nor A$.
    We claim that for all $(x,n)\in\nor A$ such that $\kappa(x,n)$ is defined and negative, $n$ is the unique unit normal of $A$ at $x$. Indeed, assuming that $n$ is not the unique unit normal of $A$ at $x$, there is another normal direction $n'\in \Nor(A,x)\cap S^1$ with $n'\neq -n$, and, since $\Nor(A,x)$ is a convex cone, any convex combination of $n$ and $n'$ is also a normal direction. Hence, we have a two-dimensional cone of normal directions. Now note that for inner normal directions in this cone, the generalized principal curvature is $+\infty$, while in the two extremal normal directions, either $\kappa(x,n)$ is not defined or $+\infty$ as well by continuity. Hence, if $\kappa(x,n)$ exists and is negative then $n$ is the unique unit normal of $A$ at $x$ as claimed.

  Let
  $$
  B^-:=\left\{x\in\R^2 |  \Nor(A,x)\cap S^1=\{n\} \text{ for some } n \text{ and } \kappa(x,n)<0\right\},
  $$
  $B^+:=\R^2\setminus B^-$, $\beta^-:=B^-\times S^1$ and $\beta^+:=B^+\times S^1$. Then obviously $\beta^+\cup \beta^-=\R^2\times S^1$ and $\beta^+\cap \beta^-=\emptyset$. Moreover, from the integral representation of $\gC_0(A,\cdot)$ (cf.~\cite[Theorem 3]{Z1}) one gets for each Borel set $\beta\subset\beta^-$,
  \begin{align*}
     \gC_0(A,\beta)=(2\pi)^{-1}\int_{\nor A} \ind{\beta}(x,n) \frac{\kappa(x,n)}
     {\sqrt{1+\kappa^2(x,n)}} \Ha^1(d(x,n))\leq 0,
    \end{align*}
  since the integrand is negative
  for each $(x,n)\in \beta\cap\nor A$ and vanishes otherwise. Similarly, for each Borel set $\beta\subset\beta^+$, we have
  \begin{align*}
     \gC_0(A,\beta)=(2\pi)^{-1}\int_{\nor A} \ind{\beta}(x,n) \frac{\kappa(x,n)}{\sqrt{1+\kappa^2(x,n)}} \Ha^1(d(x,n))\geq 0
  \end{align*}
  since the integrand is nonnegative for each $(x,n)\in \beta^+\cap \nor A$ except for a set of measure zero. Therefore, the pair $(\beta^+, \beta^-$) is a Hahn decomposition of the signed measure $\gC_0(A,\cdot)$. Now recall that $\beta^\pm=B^\pm\times S^1$, which implies immediately that $(B^+, B^-)$ is a Hahn decomposition of the measure $C_0(A,\cdot)$, since $B^+\cup B^-=\R^2$, $B^+\cap B^-=\emptyset$, $C_0(A,B)\leq 0$ for each Borel set $B\subset B^-$ and $C_0(A,B)\geq 0$ for each Borel set $B\subset B^+$. Since $\gC_0^\pm(A,\beta)=\pm\gC_0(A,\beta\cap\beta^\pm)$ and $C_0^\pm(A,B)=\pm C_0(A,B\cap B^\pm)$, we conclude
  \begin{align*}
  \gC_0^\pm(A,B\times S^1)&=\pm \gC_0(A,(B\times S^1)\cap\beta^+)=\pm \gC_0(A,(B\cap B^+)\times S^1)\\&=\pm C_0(A,B\cap B^+)=C_0^\pm(A,B)
  \end{align*}
  for each Borel set $B\subset\R^2$, as asserted in the lemma. The corresponding claim for the total variation measures follows now immediately by adding the positive and the negative variations in the above equation, completing the proof.
\end{proof}

\begin{cor}\label{exp=direxp}
Suppose that $F\subset\R^2$ is bounded. Then for each $k\in\{0,1,2\}$ the scaling exponents $\tilde s_k(F)$ and $s_k(F)$ are well defined and coincide. Similarly, $\tilde a_k(F)=a_k(F)$.
\end{cor}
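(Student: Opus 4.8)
The plan is to reduce the corollary to a pointwise (in $\eps$) comparison of total masses. Recall from the introduction that $C_k^\var(F_\eps)\le\widetilde C_k^\var(F_\eps)$ always holds, so that $s_k(F)\le\tilde s_k(F)$ and $a_k(F)\le\tilde a_k(F)$; moreover, since $d=2\le 3$, almost every $\eps>0$ is regular for $F$, so the curvature and curvature--direction measures of $F_\eps$ are defined for a.a.\ $\eps$ and all four exponents are meaningful. Thus it suffices to prove that for each $k\in\{0,1,2\}$ one has $\widetilde C_k^\var(F_\eps)=C_k^\var(F_\eps)$ for almost all $\eps>0$; once this is known, the sets appearing in the definitions \eqref{eq:sk-def1}, \eqref{eq:sk-def2} and \eqref{eq:tilde-sk-def1} coincide and the equalities $\tilde s_k(F)=s_k(F)$ and $\tilde a_k(F)=a_k(F)$ follow immediately.

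For $k=1$ and $k=2$ this identity is elementary, because in these cases both $C_k(F_\eps,\cdot)$ and $\widetilde C_k(F_\eps,\cdot)$ are \emph{nonnegative} measures: for $k=2$ they are (the obvious lifts of) $\lambda_2(F_\eps\cap\cdot)$, while for $k=1$ the measure $C_1(F_\eps,\cdot)$ is the (half) surface area and $\widetilde C_1(\widetilde{F_\eps},\cdot)$ is nonnegative by Z\"ahle's integral formula (its integrand is $(1+\kappa^2)^{-1/2}\ge 0$), which passes to $F_\eps$ because normal reflection preserves positivity. Since the total variation of a nonnegative measure is the measure itself, the defining relation $C_k(F_\eps,A)=\widetilde C_k(F_\eps,A\times S^1)$ applied with $A=\R^2$ gives $\widetilde C_k^\var(F_\eps)=\widetilde C_k(F_\eps,\R^2\times S^1)=C_k(F_\eps,\R^2)=C_k^\var(F_\eps)$.

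The case $k=0$ is the substantial one, and it is exactly what Lemma~\ref{curv=dircurv} was set up for. I would fix a regular value $\eps>0$ and apply that lemma to $A:=\widetilde{F_\eps}$: it has positive reach by regularity, and $\bd A=\bd F_\eps$ is bounded because $F$ is. Granting the remaining hypothesis (discussed below), the lemma gives, taking $B=\R^2$, that $\widetilde C_0^\var(\widetilde{F_\eps})=C_0^\var(\widetilde{F_\eps})$. It then remains to transfer this identity from $\widetilde{F_\eps}$ to $F_\eps$ via the normal-reflection definition of the curvature and curvature--direction measures of $F_\eps$: one has $C_0(F_\eps,\cdot)=-C_0(\widetilde{F_\eps},\cdot)$, and $\widetilde C_0(F_\eps,\cdot)$ is obtained from $\widetilde C_0(\widetilde{F_\eps},\cdot)$ by a global sign change and the reflection $(x,n)\mapsto(x,-n)$ on $\R^2\times S^1$; both operations leave the total variation mass unchanged, so $\widetilde C_0^\var(F_\eps)=\widetilde C_0^\var(\widetilde{F_\eps})=C_0^\var(\widetilde{F_\eps})=C_0^\var(F_\eps)$ for every regular $\eps$, hence for a.a.\ $\eps$.

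The step I expect to require genuine care is the verification of the last hypothesis of Lemma~\ref{curv=dircurv} for $A=\widetilde{F_\eps}$, i.e.\ that no $x\in A$ carries a pair of opposite nonzero normals. For $x$ in the interior of $A$ there is nothing to check, so let $x\in\bd A$; the obstruction would be that $\Tan(A,x)$ is contained in a line. I would rule this out using the second half of regularity of $\eps$, namely that $\bd F_\eps$ is a Lipschitz $1$-manifold: then, in suitable orthonormal coordinates $(s,y)$ centred at $x$, the parallel set $F_\eps$ is locally the epigraph $\{y\ge g(s)\}$ of a Lipschitz function $g$ with $g(0)=0$, so $\widetilde{F_\eps}$ is locally the subgraph $\{y\le g(s)\}$; a short estimate from $|g(s)|\le L|s|$ shows that $\Tan(\widetilde{F_\eps},x)$ contains the whole closed cone $\{(v_1,v_2):v_2\le -L|v_1|\}$, which has nonempty interior and hence is not contained in any line. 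Geometrically this is just the statement that regularity of $\eps$ excludes inward cusps of $F_\eps$ --- such as those arising when $F$ is a two-point set at mutual distance exactly $2\eps$, for which $\eps$ is a critical value of $\dist(\cdot,F)$ and thus not regular. Everything else in the argument is routine bookkeeping.
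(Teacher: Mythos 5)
Your proposal is correct and takes essentially the same route as the paper: reduce to $k=0$, verify the no-antipodal-normals hypothesis of Lemma~\ref{curv=dircurv} from the fact that $\bd F_\eps$ is a Lipschitz manifold at regular $\eps$, apply the lemma, and transfer between $\widetilde{F_\eps}$ and $F_\eps$ by normal reflection. Your explicit Lipschitz-graph verification of the hypothesis and the nonnegativity argument for $k=1,2$ are simply more detailed versions of steps the paper treats as immediate.
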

\begin{proof}
We just prove the case $k=0$. For $F\subset\R^2$, almost all $\eps>0$ are regular values of the distance function of $F$ implying that $F_\eps$ has a Lipschitz boundary and that $\widetilde{(F_\eps)}$ has positive reach. Hence the curvature direction measures $\gC_0(\widetilde{(F_\eps)},\cdot)$ and, -- via the reflection principle -- also $\gC_0(F_\eps,\cdot)$ are well defined for those $\eps$. This is enough to ensure that the scaling exponents $s_0(F)$ and $\tilde s_0(F)$ (given by \eqref{eq:sk-def1} and \eqref{eq:tilde-sk-def1}, respectively) are well defined (possibly $\infty$).

Note that in a Lipschitz boundary no two vectors $n$ and $-n$ can be normals at the same boundary point. Hence Lemma~\ref{curv=dircurv} can be applied and  we have in particular $C_0^\var(F_\eps)=\gC_0^\var(F_\eps)$ for each regular $\eps>0$ from which the equality $\tilde s_0(F)=s_0(F)$ is easily seen.
The relation $\tilde a_0(F)=a_0(F)$ follows similarly from Lemma~\ref{curv=dircurv}.
\end{proof}

Now we will establish that for self-similar sets $K$ local $1$-flatness implies that $s_{0}(K)$ is strictly smaller than $\dim_M K$.
For this, we require the following two lemmas. The first one is a rather simple geometric observation.
We will use the following notation:
For $x\in\er^{2}$ and $a,b>0$ put
$$
R(x,a,b)=x+[-a,a]\times [-b,b]\quad\text{and}\quad Q(x,a,b)=x+[-a,a]\times\{-b,b\}.
$$
\begin{lem}\label{distancelemma}
Let $0<\alpha<1$, $x_{i},$ $i=1,...,N$ be points in $\er^{2}$ and $r_{i},$ $i=1,...,n$ be positive real numbers.
Put
$$
P=\bigcup_{i=1}^{n}R(x_{i},10r_i,r_i)^{\circ},\quad
Q=\bigcup_{i=1}^{n}Q(x_{i},10r_i,r_i),
$$
$$
R=\bigcup_{i=1}^{n}R(x_{i},\alpha r_i,\alpha r_i)\quad \text{and} \quad S=Q\setminus R.
$$
Then for every $x\in P^{c}$ we have
$$
\dist(x,R)>\dist(x,S).
$$
\end{lem}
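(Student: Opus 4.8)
The plan is to reduce the statement to an estimate for a single pair of nested rectangles and then to combine these over the index set by a minimality argument. First note that, since $\alpha<1$, each square $R(x_i,\alpha r_i,\alpha r_i)$ is contained in the open rectangle $R(x_i,10r_i,r_i)^\circ$, so $R\subseteq P$; in particular every $x\in P^c$ lies outside $R$. As $R$ is a finite union of compact squares, $\dist(x,R)$ is attained, say at a point of $R(x_j,\alpha r_j,\alpha r_j)$; put $d:=\dist(x,R)=\dist(x,R(x_j,\alpha r_j,\alpha r_j))>0$.

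The core of the proof is the following single-rectangle inequality: \emph{if $c\in\er^2$, $\rho>0$, and $y\notin R(c,10\rho,\rho)^\circ$, then}
$$\dist\bigl(y,Q(c,10\rho,\rho)\bigr)<\dist\bigl(y,R(c,\alpha\rho,\alpha\rho)\bigr).$$
I would prove this by translating $c$ to the origin and rescaling by $\rho^{-1}$, which turns the small square into $[-\alpha,\alpha]^2$, the rectangle into $[-10,10]\times[-1,1]$ and its horizontal edges into $[-10,10]\times\{-1,1\}$. Since all three sets are invariant under reflection in either coordinate axis, one may assume $y=(p,q)$ with $p,q\ge 0$; the hypothesis then reads $p\ge 10$ or $q\ge 1$, and the nearer horizontal edge is $[-10,10]\times\{1\}$. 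With $u:=\max\{0,p-\alpha\}$, $v:=\max\{0,q-\alpha\}$, $w:=\max\{0,p-10\}$ the two squared distances are $u^2+v^2$ and $w^2+(q-1)^2$, and the desired inequality $w^2+(q-1)^2<u^2+v^2$ follows from a short case distinction: if $p\ge 10$ then $u^2-w^2=(10-\alpha)(2p-10-\alpha)>81$ dominates $(q-1)^2-v^2\le 1$; if instead $p<10$ (so $w=0$) and $q\ge 1$, then $v^2-(q-1)^2=(1-\alpha)(2q-1-\alpha)\ge(1-\alpha)^2>0$ already gives $u^2+v^2>(q-1)^2$.

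To conclude: since $x\in P^c$ we have in particular $x\notin R(x_j,10r_j,r_j)^\circ$, so the single-rectangle inequality applied with $c=x_j$, $\rho=r_j$, $y=x$ produces a point $\zeta\in Q(x_j,10r_j,r_j)\subseteq Q$ with $|x-\zeta|<d$. If $\zeta$ were in $R$ we would get $d=\dist(x,R)\le|x-\zeta|<d$, a contradiction; hence $\zeta\in Q\setminus R=S$, and therefore $\dist(x,S)\le|x-\zeta|<d=\dist(x,R)$, as claimed.

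The only genuinely non-routine point is the observation that the auxiliary edge point $\zeta$ need not a priori lie in $S$ — it could be swallowed by some other square $R(x_i,\alpha r_i,\alpha r_i)$ — but this is automatically excluded because $d$ is the distance to \emph{all} of $R$: anything strictly closer to $x$ than $d$ avoids $R$. The remainder is an elementary planar distance computation, whose only quantitative input is the gap between the constants $10$ and $1$ in the definitions of $P$ and $Q$; any ratio comfortably larger than $1$ would suffice.
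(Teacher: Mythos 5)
Your proof is correct and follows essentially the same route as the paper: locate the square $R(x_j,\alpha r_j,\alpha r_j)$ realizing $\dist(x,R)$, use $x\notin R(x_j,10r_j,r_j)^\circ$ to produce a strictly closer point on $Q(x_j,10r_j,r_j)$, and observe that this point cannot lie in $R$ by minimality, hence lies in $S$. The only difference is that you supply an explicit verification of the single-rectangle distance inequality, which the paper asserts without computation; your case analysis there is sound.
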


\begin{proof}
Let $x\in P^{c}$, since $R$ is compact there is some $z\in R$ such that $|x-z|=\dist(x,R).$
Therefore there is some $x_{i}$ such that
$$
\dist(x,R)=\dist(x,R(x_{i},\alpha r_i,\alpha r_i)).
$$

Since we know that $x\not\in R(x_{i},10r_i,r_i)^{\circ}$ there is some $w\in Q(x_{i},10r_i,r_i)$ such that $|x-w|<|x-z|.$
Now, we have either $w\in S$ which would would lead to $\dist(x,S)\leq |x-w|<|x-z|=\dist(x,R)$ which is what we want to prove,
or $w\not\in S$ which means that $w\in R$ and $|x-w|<|x-z|$ which is a contradiction with the choice of $z.$
\end{proof}

\begin{lem}\label{rectangles}
Let $K$ be a self-similar set in $\er^{2}$ satisfying OSC that is not contained in a line segment. Suppose $K$ is locally $1$-flat.
Then there is a system of contracting similarities $\varphi_{1},...,\varphi_{l}$ generating $K$ such that for
the self-similar set $L$, generated by the mappings $\varphi_{1},...,\varphi_{l-1},$
there is a number $0<\alpha<1$ and sequences $\{x_{i}\}_{i=1}^{\infty}\subset\er^{2}$ and $\{r_{i}\}_{i=1}^{\infty}\subset\er_{+}$
such that in some coordinate system in $\er^{2}$ the following properties are satisfied:
\begin{enumerate}
\item $Q(x_{i},10r_i,r_i)\subset K,$ for every $i\in\en$,
\item $K\setminus L\subset\bigcup_{i=1}^{\infty}R(x_{i},\alpha r_i,\alpha r_i),$
\item for every $i\in\en$, there is a nonempty set $A_{i}\subset\er$ such that
$K\cap R(x_{i},10r_i,r_i)$ is a translated and scaled copy of $[-1,1]\times A_{i}.$
\end{enumerate}
\end{lem}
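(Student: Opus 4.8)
The idea is to extract from local $1$-flatness a single cube $C$ on which $K$ looks like $[-1,1]\times A$ (a union of parallel segments), pass to a subcube sitting inside a feasible open set, and then use the iterates of this cube under the IFS to produce the required sequence of rectangles $R(x_i,\alpha r_i,\alpha r_i)$ covering $K\setminus L$, together with the horizontal ``lid'' segments $Q(x_i,10r_i,r_i)$ lying in $K$. The similarities $\varphi_1,\ldots,\varphi_l$ generating $K$ will be obtained by iterating the original IFS: we pick $m$ large, list all words $\omega\in\Sigma_N^m$, and reorder them so that one distinguished cylinder $\varphi_l(K)$ is the one hitting the flat part, with $L$ generated by $\varphi_1,\ldots,\varphi_{l-1}$. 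The point of removing one map is that $K\setminus L$ is then contained in finitely many images of the whole set $K$ under the maps $\varphi_\omega$ with $\omega\in\{1,\ldots,l-1\}^n$ that pass through the ``hole'' $\varphi_l(K)$; more precisely, $K\setminus L=\bigcup_n (\text{cylinders of level }n\text{ not in }L)$, and each such cylinder, being a scaled copy of $K$, is covered by a suitable translated and scaled copy of the flat cube $C$ provided $C$ was chosen large enough to contain $K$ after rescaling.

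\textbf{Step 1: normalise the flat cube.} By Proposition~\ref{prop:one-point-flat} (applied with $k=1$; note $K$ is not contained in a line segment so it is not locally $2$-flat, hence the set $P$ in the definition is not a cube but a genuinely nontrivial subset of $\R$) and Proposition~\ref{globalflat}, there is a cube $C$, a feasible open set $O$ with $C\subset O$, and after an affine change of coordinates we may assume $C\cap K=[-1,1]\times A$ for some nonempty $A\subset\R$, and moreover $K\subset\psi(C\cap K)$ for some similarity $\psi$ — so $K$ itself is, up to similarity, squeezed between two horizontal lines. Rescaling the coordinate system we may take $C=[-c,c]\times[-c,c]$ with $c$ chosen so that the ``width'' of $C\cap K$ in the second coordinate is comfortably less than $c$; this is the source of the factor $10$ versus $1$ in $R(x,10r,r)$ versus $R(x,\alpha r,\alpha r)$. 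By shrinking slightly we arrange that the two horizontal sides of (a suitable sub-rectangle of) $C$ meet $K$, i.e. produce the segments $Q(x_i,10r_i,r_i)\subset K$ — here we use that $C\cap K=[-1,1]\times A$ is a product, so the top and bottom edges $[-1,1]\times\{\pm 1\}$ (rescaled) lie in $K$ whenever $\pm 1\in A$; if the extreme values of $A$ are not $\pm 1$ we replace $C$ by the smaller cube cut off at $\sup A$ and $\inf A$, which is harmless.

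\textbf{Step 2: choose the IFS and the exceptional map.} Pick $x^{*}\in C^\circ\cap K\cap O$. Choose $m$ so large that every cylinder $\varphi_\omega(K)$, $|\omega|=m$, that contains $x^{*}$ satisfies $\varphi_\omega(K)\subset C^\circ$ (possible since $\operatorname{diam}\varphi_\omega(K)\le r_{\max}^m\operatorname{diam}K\to 0$); actually we want the cylinder $\varphi_l(K)$ through $x^{*}$ to lie well inside $C$. Relabel the $N^m$ words of length $m$ as $\varphi_1,\ldots,\varphi_l$ ($l=N^m$) with $\varphi_l$ the one through $x^{*}$; then $\{\varphi_1,\ldots,\varphi_l\}$ generates $K$ and satisfies OSC, and $L$ is the attractor of $\varphi_1,\ldots,\varphi_{l-1}$. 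As in the proof of Theorem~\ref{fulldimension}, $K\setminus L\subset\bigcup_{n}\bigcup_{\omega\in\{1,\ldots,l-1\}^{n-1}\times\{l\}}\varphi_\omega(K)$, since any point of $K$ not in $L$ must use the letter $l$ at some first position. Enumerate these finitely-per-level cylinders as $\varphi_{\tau_1}(K),\varphi_{\tau_2}(K),\ldots$, and for each put $\varphi_{\tau_i}=$ a similarity with ratio $\rho_i$ and some isometric part; set $r_i:=\rho_i\cdot c$ and let $x_i$ be the image under $\varphi_{\tau_i}$ of the centre of $C$, with the coordinate frame at $x_i$ obtained by pushing forward the standard frame.

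\textbf{Step 3: verify (1)--(3).} Since $\varphi_{\tau_i}$ ends in the letter $l$, we have $\varphi_{\tau_i}(K)=\varphi_{\tau_i|(n-1)}\circ\varphi_l(K)\subset\varphi_{\tau_i|(n-1)}(C^\circ)$, and $\varphi_{\tau_i|(n-1)}(C)$ is exactly a translated, scaled, rotated copy of $C$; its intersection with $K$ equals $\varphi_{\tau_i|(n-1)}(C\cap K)$ by the OSC-locality argument used in Proposition~\ref{prop:one-point-flat} (namely $C\subset O$ forces $\varphi_\sigma(C)\cap K=\varphi_\sigma(C\cap K)$), hence is a translated and scaled copy of $[-1,1]\times A$ — this is exactly (3) with $A_i:=A$ (up to the affine rescaling), after noting that we should set the big rectangle $R(x_i,10r_i,r_i)$ to be this copy of $C$ and that $C\cap K$ is ``$10$ to $1$'' flat by Step 1. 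Property (1) follows because the top and bottom edges of $\varphi_{\tau_i|(n-1)}(C)$ lie in $\varphi_{\tau_i|(n-1)}(C\cap K)\subset K$, giving the lid segments $Q(x_i,10r_i,r_i)\subset K$. Property (2) holds since $K\setminus L\subset\bigcup_i\varphi_{\tau_i}(K)\subset\bigcup_i\varphi_{\tau_i|(n-1)}(C)\subset\bigcup_i R(x_i,10r_i,r_i)$, and we can even improve this to the smaller rectangles $R(x_i,\alpha r_i,\alpha r_i)$: the set $\varphi_{\tau_i}(K)$ is a copy of $K$, which by Step 1 is contained in $\psi(C\cap K)$, a product set of bounded aspect ratio, so $\varphi_{\tau_i}(K)$ fits inside a concentric rectangle of relative half-width $\alpha$ for a suitable universal $0<\alpha<1$ determined by $\operatorname{diam}(\psi(C\cap K))/c$ and the fact that $\varphi_l(K)\subset C^\circ$ with a definite margin; choose $\alpha$ accordingly.

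\textbf{Main obstacle.} The delicate point is the simultaneous bookkeeping of three scales: the outer rectangle $R(x_i,10r_i,r_i)$ (a copy of the cube $C$), the inner rectangle $R(x_i,\alpha r_i,\alpha r_i)$ that must still contain the cylinder $\varphi_{\tau_i}(K)$, and the requirement that these be honest axis-aligned rectangles in a \emph{single} coordinate system. The last is subtle because the maps $\varphi_{\tau_i}$ carry rotations, so a priori the cubes $\varphi_{\tau_i|(n-1)}(C)$ point in different directions; this is precisely where Corollary~\ref{flatremark} is needed — since $K$ is locally $1$-flat but not locally $2$-flat, all the flat segments are parallel, which forces the rotational parts of the relevant $\varphi_\sigma$ to act trivially on the flat direction, so that all the rectangles $R(x_i,\cdot,\cdot)$ are indeed aligned (up to translation and scaling) in the coordinate system adapted to the global flat direction. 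Making this rotational-rigidity argument precise, and checking that the ``margin'' between $\varphi_l(K)$ and $\partial C$ survives all the iterations so that a uniform $\alpha<1$ works, is the technical heart of the lemma; everything else is routine manipulation of cylinders under the OSC.
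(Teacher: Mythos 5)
Your strategy is essentially the paper's: take a flat cube $C$ inside a feasible open set, iterate the IFS so that one distinguished cylinder map $\varphi_l$ lands deep inside the flat region, let $L$ be the attractor of the remaining maps, cover $K\setminus L$ by the images of that region under the words preceding the first occurrence of the letter $l$, and use the locality forced by $C\subset O$ together with Corollary~\ref{flatremark} to keep everything axis-aligned. Two steps, however, are not actually carried out, and your proposed substitutes do not work.

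First, property (1). Cutting the cube off at $\sup A$ and $\inf A$ (or ``rescaling'') does not change the aspect ratio of the configuration $C\cap K$: if $A$ consisted of two points at distance comparable to the side of $C$, the resulting rectangle would be roughly square, never $10{:}1$. What is needed is that $A$ is \emph{infinite} (this is where the hypothesis that $K$ is not contained in a line segment enters, via Proposition~\ref{globalflat}), so that $A$ contains two points whose distance is arbitrarily small compared with the fixed horizontal width of $C$; taking $2r$ to be that distance produces $Q(x,10r,r)\subset C\cap K\subset K$. You never invoke the infiniteness of $A$, so (1) is not established. Second, and more seriously, property (2). You only arrange $\varphi_l(K)\subset C^\circ$, which yields a cover of $K\setminus L$ by full copies of the cube $C$ (which, incidentally, cannot literally ``be'' the $10{:}1$ rectangles $R(x_i,10r_i,r_i)$), not by the small concentric squares $R(x_i,\alpha r_i,\alpha r_i)$ with a uniform $\alpha<1$ --- and it is precisely this strict separation from the lids that Lemma~\ref{distancelemma} needs downstream. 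Your repair via $K\subset\psi(C\cap K)$ fails because $\psi$ is expanding, so the bound $\diam\varphi_{\tau_i}(K)\leq\rho_i\,\diam\psi(C\cap K)$ with $\diam\psi(C\cap K)\geq\diam K$ is far larger than the side $2\alpha\rho_i c$ of the target square; no uniform $\alpha$ comes out of it. The fix is to choose the distinguished cylinder deep enough from the outset: fix $\alpha<1$ and a point $y\in R(x,\alpha r,\alpha r)^\circ\cap K$ (again using that $A$ is infinite), set $D:=\dist(y,R(x,\alpha r,\alpha r)^{c})>0$, and iterate until the cylinder containing $y$ has diameter less than $D$, so that $\varphi_l(K)\subset R(x,\alpha r,\alpha r)^\circ$; with that containment your verification of (2) goes through verbatim. (The rotational-alignment issue you single out as the ``technical heart'' is, by contrast, already settled by Corollary~\ref{flatremark}, exactly as you use it.)
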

\begin{proof}
We argue similarly as in the proof of Theorem~\ref{fulldimension}.
First we fix some feasible open set $O$ for the SOSC and find a cube $C\subset O$ as guaranteed by the assumption of local $1$-flatness.
Then we choose a coordinate system such that $C\cap K=s[0,1]\times A$ for some $s>0$ and some compact set $A\subset\R$.
Since $K$ is not contained in a line segment, $A$ must be infinite.
Therefore, there is some set of the form $T=Q(x,10r,r)\subset C\cap K$ for some $x\in\er^{2}$ and $r>0$.
Choose $\alpha\in ]0,1[$  such that $R(x,\alpha r,\alpha r)^{\circ}\cap K\not=\emptyset$ and choose $y\in R(x,\alpha r,\alpha r)^{\circ}\cap K.$
Put $D:=\dist(y,R(x,\alpha r,\alpha r)^{c})$.
Let $\psi_{1},...,\psi_{N}$ be similarities generating $K$ with ratios $\rho_{1},...,\rho_{N}.$
If we put $\rho=\max_{i=1,...,N} \rho_{i}>0$, we can find $n\in\en$ such that $\rho^{n}\diam K<D.$

Let $\varphi_{1},...,\varphi_{k}$ be all mappings of the form $\psi_{\omega}$ for $\omega\in\Sigma_k^{n}$ ordered in such a way that $y\in\varphi_{k}(K)$. Let $t_i$ denote the contraction ratio of $\varphi_i$.
Then the IFS $\{\varphi_{1},...,\varphi_{k}\}$ also generates $K$, satisfies OSC (with the feasible open set $O$ from above) and, moreover, $\varphi_{k}(K)\subset R(x,\alpha r,\alpha r)^{\circ}$.
Now let $(x_{i})_{i\in\N}$ be the sequence of all points of the form
$\varphi_{\sigma}(x)$ and $(r_{i})_{i\in\N}$ the sequence of all numbers $rt_{\sigma}$, where $\sigma$ runs through $\Sigma_k^{*}$.

It remains to show that the conditions (1) to (3) hold. For a proof of (1) and (3), fix some $i\in\N$ and let $\sigma$ be the word such that $r_i=rt_\sigma$ and $x_i=\varphi_\sigma(x)$. The inclusion $T\subset K$
implies $Q(\varphi_\sigma(x), 10 rt_\sigma, rt_\sigma)=\varphi_\sigma(T)\subset\varphi_\sigma(K)\subset K$ from which (1) is transparent. Furthermore, since, by construction, $R(x,10r,r)\subset C$, we have
$$
K\cap R(x_i,10r_i, r_i)=K\cap \varphi_\sigma(R(x,10r,r))\subset K\cap\varphi_\sigma(C)=\varphi_\sigma(C\cap K),
$$
where for the last equality we also used that $C\subset O$. Since, by assumption and the choice of the coordinate system, $C\cap K$ is a product set, it is obvious that $\varphi_\sigma(C\cap K)$ is similar to a product set. Observing now that the cube $\varphi_\sigma(C)$ is a valid cube for the condition of local flatness, we can apply Corollary~\ref{flatremark} to infer that line segments in $K\cap\varphi_\sigma(C)$ must be parallel to the first coordinate axis. Hence the same holds for any restriction of this set to a rectangle with sides parallel to the coordinate axes which implies (3).
To prove condition $(2)$, let $x\in K\setminus L$. Since $x\in K$, there is a sequence $\omega=\omega_1\omega_2\omega_3...\in\{1,\ldots,k\}^\N$ such that $\lim_{n\to \infty} \varphi_{\omega|n}(K)=x$ and, since $x\in L^c$, there exists an index $m\in\N$ such that $\omega_m=k$. Hence, putting $\sigma=\omega|m-1$ and letting $i\in\N$ be the index such that $x_i=\varphi_\sigma(x)$ and $r_i=r t_\sigma$, we have
$$
x\in\varphi_\sigma(\varphi_k(K))\subset\varphi_\sigma(R(x,\alpha r,\alpha r))=R(x_i, \alpha r_i, \alpha r_i).
$$
This shows (2).
\end{proof}

Now we have all ingredients to prove one direction of part (ii) of Theorem~\ref{thm:main-R2}.
\begin{prop} \label{prop:flatness_in R2}
Let $K$ be a self-similar set in $\er^{2}$ satisfying OSC with $\dim_M K<2$. If $K$ is locally $1$-flat then $s_{0}(K)<\dim_M K$.
\end{prop}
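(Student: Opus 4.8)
The overall strategy is to show that $C_0^\var(K_\eps)=O(\eps^{-s})$ for some $s<D:=\dim_M K$, which by the characterisation of $s_0$ as a supremum in \eqref{eq:sk-def1} immediately gives $s_0(K)\le s<\dim_M K$. I would first dispose of the degenerate case in which $K$ is contained in a line segment: then $\aff K$ has dimension $1$ (a self-similar set satisfying OSC with $N\ge 2$ maps is never a single point), $K$ is locally $1$-flat also when viewed as a subset of $\R$, and Proposition~\ref{embedding fulldimension} applies directly, yielding $s_0(K)<\dim_M K=1$. So from now on assume $K$ is not contained in a line segment, so that Lemma~\ref{rectangles} is available.

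Invoking Lemma~\ref{rectangles}, fix a generating IFS $\varphi_1,\dots,\varphi_l$ of $K$ with all the stated properties: the self-similar set $L$ generated by $\varphi_1,\dots,\varphi_{l-1}$, a number $0<\alpha<1$, and sequences $\{x_i\},\{r_i\}$ such that $Q(x_i,10r_i,r_i)\subset K$, $K\setminus L\subset\bigcup_i R(x_i,\alpha r_i,\alpha r_i)$, and $K\cap R(x_i,10r_i,r_i)$ is a translated scaled copy of $[-1,1]\times A_i$. Since $L$ is generated by a proper sub-IFS of a generating IFS of $K$ satisfying OSC, its similarity dimension $D':=\dim_M L$ satisfies $D'<D$, and $L$ itself satisfies OSC (with the same feasible open set). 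Let $P,Q,R,S$ be as in Lemma~\ref{distancelemma}, built from these $\{x_i\},\{r_i\}$.

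The two geometric facts driving the proof are: \emph{(a) flatness kills curvature in the bulk of each rectangle.} Fix a regular $\eps>0$ and an index $i$ with $10r_i>\eps$. By property (3) of Lemma~\ref{rectangles}, inside $R(x_i,10r_i,r_i)$ the set $K$ consists of horizontal segments spanning the full width of the rectangle, among them (by $Q(x_i,10r_i,r_i)\subset K$) the two edges at vertical coordinate $\pm r_i$; hence for $z$ in the open sub-rectangle $U_i$ obtained from $R(x_i,10r_i,r_i)$ by shrinking it by $\eps$ in the horizontal direction, the nearest point of $K$ to $z$, if within $\eps$, lies directly above or below $z$ on one of these segments (any vertical escape out of the rectangle would cross $Q(x_i,10r_i,r_i)\subset K$). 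Consequently $K_\eps\cap U_i$, and likewise $\widetilde{(K_\eps)}\cap U_i$, is a union of horizontal strips, which has vanishing $0$-th curvature; by local determinacy of curvature measures, $C_0^\var(K_\eps,\cdot)$ restricted to $U_i$ is the zero measure. \emph{(b) The distance comparison removes $K\setminus L$ off the rectangles.} For $z\in P^c$, Lemma~\ref{distancelemma} gives $\dist(z,R)>\dist(z,S)$; since $S\subset Q\subset K$ and $K\subset L\cup R$, this forces $\dist(z,K)=\dist(z,L)$, hence $K_\eps\cap P^c=L_\eps\cap P^c$, and again by local determinacy $C_0^\var(K_\eps,\cdot)$ and $C_0^\var(L_\eps,\cdot)$ coincide on the open set $(\overline P)^c$.

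Combining (a) and (b), for every regular $\eps$ one obtains
\[
C_0^\var(K_\eps)=C_0^\var\bigl(L_\eps,(\overline P)^c\bigr)+C_0^\var\Bigl(K_\eps,\;\overline P\setminus\!\!\bigcup_{i:\,10r_i>\eps}\!\!U_i\Bigr),
\]
and it remains to bound each summand by $O(\eps^{-s})$ for some $s<D$. The second summand splits into contributions from the $\eps$-frames of the ``large'' rectangles ($10r_i>\eps$) and from the ``small'' ones ($10r_i\le\eps$); in a frame of a rectangle of size $r_i$ the boundary $\bd K_\eps$ consists of the horizontal strips (carrying no curvature) where they run into the surrounding part of $K$, and the number of such strips at scale $\eps$ is $\lesssim(\eps/r_i)^{-(D-1)}$ (they are indexed by an $\eps$-separated subset of the scaled copy $A_i$ of a set of Minkowski dimension $D-1$); since the rectangles of a given size $\asymp\rho$ number $\lesssim\rho^{-D}$ and their overlaps are bounded via the finite clustering property of the OSC, summing over dyadic $\rho\in[\eps,1]$ and over the small rectangles yields a bound $\lesssim\eps^{-(D-1)}|\log\eps|$. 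The first summand, $C_0^\var(L_\eps,(\overline P)^c)$, is the delicate one: it cannot be estimated by $C_0^\var(L_\eps)$ (which may be of infinite polynomial order in $\eps^{-1}$, as the Cantor dust $C\times C$ shows), so one must instead use that the collar $L_\eps\setminus\overline P$ lies within distance $\eps$ of $L\subset K\subset\overline P$ to localise $\bd K_\eps(=\bd L_\eps)$ there and bound its turning features by the number of nearby rectangles of comparable size, which is $\lesssim\eps^{-D'}$ up to a logarithmic factor. Altogether $C_0^\var(K_\eps)\lesssim\eps^{-\max(D-1,D')}|\log\eps|$ with $\max(D-1,D')<D$, whence $s_0(K)<\dim_M K$. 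I expect the genuine difficulty to lie entirely in this last step -- the bookkeeping of the residual curvature in the rectangle frames and in the collar of $L$ -- where one has to control the local topological complexity of $\bd K_\eps$ using only the flatness-induced cancellation and the dimension gap $D'<D$, with no recourse to any global curvature bound on $K$ or $L$.
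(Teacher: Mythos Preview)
Your opening moves coincide with the paper's: dispose of the line-segment case via Proposition~\ref{embedding fulldimension}, then invoke Lemma~\ref{rectangles}; your facts (a) and (b) are also the two geometric ingredients the paper uses. The divergence is entirely in how the argument is finished.

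The paper's completion is considerably shorter. First, it sharpens your fact (a): the curvature of $K_\eps$ vanishes not just on an $\eps$-shrunk subrectangle $U_i$ but on the whole of $R(x_i,8r_i,r_i)^\circ$, because any $y\in\bd K_\eps$ lying in that rectangle already forces $\eps<r_i$ (the horizontal lines $Q(x_i,10r_i,r_i)\subset K$ lie at height $\pm r_i$), and then the product structure on the wider $R(x_i,10r_i,r_i)$ makes $\bd K_\eps$ a horizontal segment near $y$. So there is no ``frame'' contribution whatsoever; the entire curvature of $K_\eps$ sits on $T:=\bd K_\eps\setminus\bigcup_i R(x_i,8r_i,r_i)^\circ$. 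Second, on $T$ the paper does not compare $K_\eps$ with $L_\eps$ directly but with the finite approximants $(K\setminus M_j)_\eps$ (where $M_j$ is the part of $K\setminus L$ in the first $j$ small cubes), and then passes to the Hausdorff limit $K\setminus M_j\to L$ via the weak-convergence result \cite[Theorem~5.2]{RSM} to obtain $C_0^\var(K_\eps)=C_0^\var(L_\eps,T)\le C_0^\var(L_\eps)$. The conclusion is then one line: $s_0(K)\le s_0(L)\le\dim_M L<\dim_M K$.

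Your decomposition into frame and collar pieces, with the counting of strips and ``turning features'', is therefore unnecessary: the paper simply bounds everything by $C_0^\var(L_\eps)$. Your Cantor-dust concern (that $C_0^\var(L_\eps)$ could in principle grow super-polynomially) is legitimate, and the paper does not elaborate on it beyond asserting $s_0(L)\le\dim_M L$ in the final line. But your proposed workaround does not actually escape the difficulty: bounding the ``turning features'' of $\bd L_\eps$ in the collar by $\eps^{-D'}$ is precisely the curvature bound for $L$ you set out to avoid, and nothing in your sketch explains why $L$ should behave better than the Cantor dust in this respect. The frame estimate is likewise only heuristic (and, as noted, superfluous once (a) is sharpened). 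So the delicate point you correctly identify remains open in your proposal as well; what you have gained over the paper's route is only complication, not a resolution of that point.
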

\begin{proof}
If $K$ is contained in a line segment, we are done due to Proposition~\ref{embedding fulldimension}.
In the other cases let $L$, $\{x_{i}\}_{i=1}^{\infty}\subset\er^{2}$ and $\{r_{i}\}_{i=1}^{\infty}\subset\er_{+}$ be as guaranteed by Lemma~\ref{rectangles}.
In the sequel, we also work in the coordinate system we get from this statement.
Note that $\dim_M L<\dim_M K$, since $\sum_{i=1}^{k-1} r_{i}^{\dim_M K}<1$.
Let $M:=K\setminus L$,
$$
P_{j}:=\bigcup_{i=1}^{j}R(x_{i},8r_i,r_i)
\quad
\text{and}
\quad
M_{j}:=M\cap\bigcup_{i=1}^{j}R(x_{i},\alpha r_i,\alpha r_i).
$$
Then we have
$$
M=\bigcup_{i=1}^{\infty}M_{j},
$$
and, by (2) of Lemma~\ref{rectangles}, $K\setminus M_{j}$ converges to $L$ in the Hausdorff metric as $j\to\infty$.

Fix some regular $\eps>0$ such that $C_0(K_\eps,\cdot)$ and $C_0(L_\eps,\cdot)$ are well defined. Let $R_i:=R(x_{i},8r_i,r_i)$. First we claim that, for every $i\in\en$,
\begin{align} \label{eq:supp-Cvar}
  \supp C^{\var}_{0}(K_{\eps},\cdot)\cap R_{i}^{\circ}=\emptyset,
\end{align}
which implies in particular that
$
C^{\var}_0(K_{\eps}, R_{i}^{\circ})=0.
$
For a proof of \eqref{eq:supp-Cvar}, let $y\in \bd K_\eps \cap R_{i}^{\circ}$. (Note that $\supp C^{\var}_{0}(K_{\eps},\cdot)\subset \bd K_\eps$.) It suffices to show that $y\not\in\supp C^{\var}_{0}(K_{\eps},\cdot)$.
The assumption implies that $r_i>\eps$, since otherwise $\bd K_\eps\cap R_i=\emptyset$.
 Taking into account property (3) of Lemma~\ref{rectangles}, we infer that $y=(y_1,y_2)$ has a unique nearest point $x$ in $K$. Moreover, $x$ lies in $R_i$ and has coordinates $x=(y_1,y_2+\eps)$ or $x=(y_1,y_2-\eps)$. Moreover, $\dist(y,K\setminus R_i)>\eps$ and so there is some $r>0$ such that $X:=\bd K_\eps \cap B(y,r)$ is a line segment (parallel to the $x_1$-axis) which obviously does not carry any curvature (formally, by locality, we have $C_0^\var(K_\eps, B^\circ(y,r))=C_0^\var(X+[0,x-y],B^\circ(y,r))=0$). This implies $y\notin\supp C^{\var}_{0}(K_{\eps},\cdot)$ proving the claim.

Put $T:=\bd K_{\eps}\cap (\bigcup_{i=1}^\infty R_{i}^{\circ})^{c}.$ Note that $T$ is compact. Moreover, from \eqref{eq:supp-Cvar}, we observe that
$$
 C^{\var}_{0}(K_{\eps},\bigcup_{i=1}^\infty R_{i}^{\circ})\leq \sum_{i=1}^\infty C^{\var}_{0}(K_{\eps},R_{i}^{\circ})=0
$$
and thus $C^{\var}_{0}(K_{\eps})=C^{\var}_{0}(K_{\eps},T)$.
From Lemma~\ref{distancelemma} and property $(3)$ of Lemma~\ref{rectangles} we infer that, for every $y\in T$ and every $j\in\N$,
$$
\dist(y,M_{j})>\dist(y,P_{j}\cap K)\geq\dist(y,K)=\eps,
$$
and using the fact that $T$ is compact, we obtain $\dist(T,M_{j})>\eps.$
This means that there is an open set $U\supset T$ such that $K_{\eps}\cap U=(K\setminus M_{j})_{\eps}\cap U.$
By the locality property of the curvature measures, we conclude
$$
C^{\var}_{0}(K_{\eps})=C^{\var}_{0}(K_{\eps},T) =C^{\var}_{0}((K\setminus M_{j})_{\eps},T).
$$
Using \cite[Theorem~5.2]{RSM} and the fact that $(K\setminus M_{j})$ converges to $L$ in the Hausdorff metric as $j\to\infty$, we obtain
$$
C^{\var}_{0}(K_{\eps})= C^{\var}_{0}(L_{\eps},T)\leq C^{\var}_{0}(L_{\eps}).
$$
Therefore
$$
s_{0}(K)\leq s_{0}(L)\leq\dim_M L<\dim_M K.
$$
\end{proof}

To complete the proof of Theorem~\ref{thm:main-R2} (ii),
it remains to show that the inequality $s_{0}(K)<\dim_M K$ implies that $K$ is locally $1$-flat, provided that $\dim_M K<2$. The argument is split into several pieces.

\begin{lem}\label{lineseg}
Let $K\subset\er^{2}$ be a self-similar set satisfying OSC such that $s_{0}(K)<\dim_M K<2.$
Suppose that $O$ is a feasible open set and that $C$ is a connected component of $O\cap K$ containing at least two points.
Then $C$ is a line segment.
\end{lem}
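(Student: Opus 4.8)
\emph{Approach.} The plan is to argue by contradiction. Suppose $C$ is a connected component of $O\cap K$ with at least two points which is \emph{not} a line segment; I will deduce $s_0(K)\ge\dim_M K$, contradicting the hypothesis. The first step is a purely topological reduction: a connected subset of a line having at least two points is a nondegenerate interval, hence a line segment, so the assumption that $C$ is not a segment forces $\overline C$ to lie in no line. Two elementary facts about $\overline C$ will be used. First, $\overline C$ is \emph{perfect}, since a connected metric space with more than one point has no isolated point. Second, $\overline C\cap O=C$: if $z\in\overline C\cap O$, then $z$ is a limit of points of $C$, so $C\cup\{z\}$ is a connected subset of $O\cap K$ containing $C$, whence $z\in C$ by maximality of the component $C$.

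\emph{Eliminating the locally flat case.} Suppose $K$ is locally $1$-flat and let $C'\subset\er^2$ be a cube as in Definition~\ref{def:loc-flat}, so that $C'\cap K$ is similar to $[0,1]\times P$ for some $P\subset\er$. Since $C'\cap K\subset K$ and $\dim_M K<2$, the factor $P$ must be totally disconnected --- otherwise $K$ would contain a set similar to $[0,1]^2$. By Proposition~\ref{globalflat} there is a similarity $\psi$ with $K\subset\psi(C'\cap K)$, so $K$ is contained in a set similar to $[0,1]\times P$ with $P$ totally disconnected. The orthogonal projection onto the second factor maps the connected set $\overline C$ into $P$, hence onto a single point, so $\overline C$ lies in a line, a contradiction. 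Therefore we may assume from now on that $K$ is \emph{not} locally $1$-flat.

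\emph{Producing curvature.} It remains to supply the hypothesis of Proposition~\ref{prop:w1-2-3-8} (in its version for the curvature measures $C_0^\var$): an open set $B\subset O$ with $B\subset O_{-\eps_0}$ together with constants $\eps_0,\beta>0$ such that $C_0^\var(K_\eps,B)\ge\beta$ for almost every $\eps\in(r_{\min}\eps_0,\eps_0]$; this immediately yields $s_0(K)\ge\dim_M K$. To build $B$, I would first fix three non-collinear points $p,q,r\in\overline C$ of some ``fatness'' $\delta_0>0$. Since $K$ is self-similar, every image $\varphi_\sigma(\{p,q,r\})$ lies in $K$, with the same fatness $\delta_0$ and with diameter $r_\sigma\diam\{p,q,r\}$. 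Choosing an infinite word $\tau$ so that the nested copies $\varphi_{\tau|n}(\{p,q,r\})$ accumulate at a point $x_0$, and combining this with a covering argument for $\overline C$ (using perfectness to make local lines agree), one obtains $x_0\in C$ and $\rho>0$ with $\overline{B(x_0,3\rho)}\subset O$ such that for every scale $t\in(0,\rho]$ the set $K\cap B(x_0,t)$ contains a $\delta_0$-fat triple of diameter comparable to $t$ --- in particular $K\cap B(x_0,t)$ lies in no line. One then puts $\eps_0:=\rho/3$ and $B:=B(x_0,\eps_0)^\circ$, which satisfies $B\subset O_{-\eps_0}$, and verifies the required lower bound on $C_0^\var(K_\eps,B)$ over the window $(r_{\min}\eps_0,\eps_0]$.

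\emph{The main obstacle.} The heart of the argument is precisely this last curvature estimate. The delicate point is that the boundary feature produced at scale $\eps$ by a $\delta_0$-fat triple of diameter $\sim\eps$ near $x_0$ could, a priori, be cancelled by neighbouring pieces of $K_\eps$ (for instance if $K$ looked, near $x_0$, like a union of a few nearly parallel strips of width $\sim\eps$); ruling this out is where one must use that $K$ is not locally $1$-flat, together with a rigidity fact to the effect that a self-similar set which, at some point, is flat at all sufficiently small scales is locally $1$-flat. Carrying this out rigorously --- and in particular controlling how the various components of $O\cap K$ near $x_0$ contribute to the parallel set --- is the technical core of the lemma. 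The remaining ingredients (the covering/perfectness argument locating $x_0$ and the routine verification of the hypotheses of Proposition~\ref{prop:w1-2-3-8}) are comparatively straightforward.
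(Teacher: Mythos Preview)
Your proposal has a genuine gap: the central curvature estimate is never carried out, and the route you suggest for it is both vaguer and harder than necessary.

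The preliminary case split on local $1$-flatness is correct (Proposition~\ref{globalflat} does give $K$ inside a product with a totally disconnected second factor, forcing $\overline C$ into a line), but it is also unnecessary --- the paper's argument works uniformly without ever invoking local flatness. More importantly, in the non-flat branch you reduce everything to showing $C_0^\var(K_\eps,B)\ge\beta$ for a ball $B=B(x_0,\eps_0)^\circ$, and then explicitly leave this undone, appealing to an unproved ``rigidity fact'' and to ruling out near-parallel-strip configurations. This is precisely the difficulty; having a $\delta_0$-fat triple of points in $K$ at every scale near $x_0$ tells you nothing direct about $C_0^\var(K_\eps,\cdot)$, because curvature of $\partial K_\eps$ can indeed be zero on large pieces even when $K$ is not contained in a line there.

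The paper avoids this problem by an entirely different and much sharper geometric mechanism. It uses that a nondegenerate connected component $C$ of $K\cap O$ is \emph{path}-connected, so one can choose a simple arc $\gamma\subset C$ with endpoints $x,y$ and a point $z\in\gamma\setminus[x,y]$, and take $B=E$ to be the open region enclosed by $\gamma\cup[x,y]$. The key point is that $\gamma\subset K$, so $\partial K_\eps$ can never cross $\gamma$. Hence any boundary component of $K_\eps$ that enters $E$ near $z$ is either a closed loop inside $E$ (giving curvature $\ge 1$ by Lemma~\ref{lem:C0var}) or a simple arc whose endpoints are forced to lie on the chord $[x,y]$; in the second case the arc reaches height $\ge D=\tfrac12\dist(z,L)$ above the line $L$ through $x,y$, so the exterior angle at its peak is bounded below by a fixed $\beta>0$, and Lemma~\ref{lem:C0var} again yields $C_0^\var(K_\eps,E)\ge\beta$. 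This barrier argument is what your sketch is missing; it produces the curvature lower bound directly, with no appeal to non-flatness or to any rigidity statement.
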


\begin{proof}
First recall that every connected component of $O\cap K$ is also path connected (see e.g.~\cite[Theorem 1.6.2]{K}).
Suppose for a contradiction that $C$ is as in the statement of Lemma~\ref{lineseg} but not a line segment.
Then there is a simple curve $\gamma\subset C\cap O$ with endpoints $x$ and $y$, and a third point $z$ on $\gamma$ with $z\notin[x,y]$ such that $B(z,\delta)\cap\gamma$ is not a line segment for any $\delta>0$. Let $E$ be the open set enclosed by $\gamma\cup [x,y]$. We can assume that there is some $\eps>0$ such that $\overline{E}\subset O_{-\eps}$. (If this is not satisfied, there is some $\eps'>0$ such that $B(z,2\eps')\subset O$. Since $B(z,\eps')\cap \gamma$ is not a line segment, one can choose new points $x'$ and $y'$ on $\gamma$ such that the subcurve $\gamma'$ of $\gamma$ from $x'$ to $y'$ contains $z$ and is contained in $B(z,\eps')\subset O_{-\eps}$. One can use $\gamma'$ as the new $\gamma$, which obviously satisfies the desired inclusion $\overline{E}\subset O_{-\eps}$ for $\eps=\eps'$.)
Let $L$ denote the line through $x$ and $y$ and set $D:=\frac{1}{2}\dist(z,L)>0$.
Since $\dim_M K<2$, the interior of $K$ is empty and we can choose $\eps_0\in(0,\eps)$ such that for every $0<\alpha<\eps_{0}$ there is some point $u_{\alpha}\in \bd K_{\alpha}\cap B^o(z,D)\cap E$. ($\eps_0$ and points $u_\alpha$ can be found as follows: Let $u$ be a point in $K^c\cap B^o(z,D)\cap E$, which exists since $B^o(z,D)\cap E$ has positive area, while $K\cap B^o(z,D)\cap E$ is a null set. Let $U$ be the connected component of $K^c\cap E$ containing $u$. $U$ is obviously open. Now $\eps_0$ can be chosen to be the inradius of $U$ and for $0<\alpha<\eps_0$ one can take any point of $\bd U_{\alpha}\cap U$ as $u_\alpha$.)
Let $A^{\alpha}$ be the connected component of $\partial K_{\alpha}$ containing $u_{\alpha}$.
We want to estimate $C^{\var}_{0}(K_{\alpha},A^{\alpha}\cap O_{-\eps})$ from below independently of $\alpha$. It is enough to consider $\alpha$ that are regular for $K$.

 Note that, as part of the boundary of a parallel set, $A^{\alpha}$ is a (not necessarily simple but closed) $C^1$ Jordan curve. There are two cases to consider. Either there is a simple curve $\Gamma^{\alpha}\subset A^{\alpha}$ containing $u_{\alpha}$ with endpoints $a_{\alpha},b_{\alpha}\in[x,y]$, or there is a loop $L^\alpha\subset A^\alpha\cap E$ with $u_{\alpha}\in L^{\alpha}$.
In the second case, we have $C^{\var}_{0}(K_{\alpha},A^{\alpha}\cap O_{-\eps})\geq C^{\var}_{0}(K_{\alpha},L^{\alpha})\geq 1$, by Lemma~\ref{lem:C0var}.

In the first case, the set enclosed by the loop
$\Gamma_\alpha \cup [a_\alpha,b_\alpha]$ has positive reach, since $\alpha$ was assumed to be  regular for $K$, and we have the following:
For every unit vector $v$ between $v_a$ and $v_b$, the (outward) unit normals of the
segments $[a_\alpha,u_\alpha]$ and $[u_\alpha,b_\alpha]$ in the triangle $a_\alpha u_\alpha b_\alpha$, there must be a point $p=p(v)$ in the
relative interior of $\Gamma_\alpha$ such that  $v\in \Nor(\Gamma_\alpha,
p)$. Using again Lemma~\ref{lem:C0var}, we get
$
C^{\var}_{0}(K_{\alpha},\Gamma^{\alpha}\cap O_{-\eps})\geq\beta_{\alpha},
$
where $\beta_{\alpha}$ is the angle between $v_a$ and $v_b$ (or equally, the exterior angle at $u_\alpha$ of the triangle $a_\alpha u_\alpha b_\alpha$). Observe that $\beta_\alpha$ is bounded from below by some positive constant $\beta$ independent of $\alpha$: Since $a_\alpha$ and $b_\alpha$ are points on the segment $[x,y]$, $\beta_\alpha$ is certainly larger than the corresponding angle in the triangle $xu_\alpha y$. Since $\alpha<\eps_0<D$, this angle cannot be smaller than the corresponding angle $\beta$ of an equilateral triangle with base $[x,y]$ and height $D$ (which minimizes this angle among the triangles with base $[x,y]$ with height $D$).
We obtain
\begin{equation*}
C^{\var}_{0}(K_{\alpha},A^{\alpha}\cap O_{-\eps})\geq C^{\var}_{0}(K_{\alpha},\Gamma^{\alpha}\cap O_{-\eps})\geq\beta_{\alpha}\geq\beta:=\frac{D}{\sqrt{|x-y|^{2}+4D^{2}}}>0.
\end{equation*}
Applying Theorem~\ref{prop:w1-2-3-8} to $K$ with $B=E\subset O_{-\eps}$ and $\beta$ and $\eps_{0}$ as above,
we obtain that $s_{0}(K)=\dim_M K$ which contradicts assumptions of the lemma.
\end{proof}

\begin{cor}\label{compline}
Let $K\subset\er^{2}$ be a self-similar set satisfying OSC such that $s_{0}(K)<\dim_M K<2.$
Then any connected component of $K$ is a (possibly degenerated) line segment.
\end{cor}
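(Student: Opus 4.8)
The plan is to derive this from Lemma~\ref{lineseg} by a self-similar rescaling argument. Let $C$ be a connected component of $K$. If $C$ consists of a single point, it is a degenerate line segment and there is nothing to prove, so we may assume $\diam C>0$; in particular $C$ contains at least two points.

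First I would fix the setting. Since $K$ satisfies OSC, it also satisfies SOSC by Schief's theorem, so there is a feasible open set $O$ with $O\cap K\neq\emptyset$; pick $x_0\in O\cap K$ and $\eta>0$ with $B(x_0,\eta)\subset O$. Letting $\varphi_1,\ldots,\varphi_N$ be the generating similarities and choosing $\omega\in\Sigma_N^*$ with $x_0\in\varphi_\omega(K)$ and $r_\omega\diam K<\eta$ (possible since $r_\omega\to 0$ along any branch), we obtain $\varphi_\omega(K)\subset B(x_0,\eta)\subset O$. Recall also that $\varphi_\omega(K)\subset K$.

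Next, consider the set $\varphi_\omega(C)$. It is a connected subset of $K$ contained in $O$, hence contained in a unique connected component $C'$ of $O\cap K$. Since $\varphi_\omega$ is a bijective similarity and $C$ has at least two points, $\varphi_\omega(C)$, and therefore $C'$, has at least two points. By Lemma~\ref{lineseg} --- whose hypotheses $s_0(K)<\dim_M K<2$ are precisely those assumed here --- the component $C'$ is a line segment. Consequently $\varphi_\omega(C)\subset C'$ lies in a line segment, and applying the similarity $\varphi_\omega^{-1}$ we conclude that $C=\varphi_\omega^{-1}(\varphi_\omega(C))$ lies in the line segment $\varphi_\omega^{-1}(C')$. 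Since $K$ is compact, the component $C$ is compact and connected, hence a closed bounded interval of the line carrying $\varphi_\omega^{-1}(C')$, i.e.\ a (possibly degenerate) line segment.

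I do not expect a serious obstacle, as Lemma~\ref{lineseg} carries the analytic weight. The one point that needs a moment's thought is that $\varphi_\omega(C)$ need not itself be a connected component of $K$ --- the tile $\varphi_\omega(K)$ may be glued to neighbouring tiles along its boundary --- but this is harmless: it suffices that $\varphi_\omega(C)$ is a connected subset of $O\cap K$, which forces it into a single connected component $C'$ of $O\cap K$, and this is all that Lemma~\ref{lineseg} requires.
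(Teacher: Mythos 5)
Your proposal is correct and follows essentially the same route as the paper: rescale the component $C$ by an iterate $\varphi_\omega$ into a feasible open set $O$ from the SOSC, locate its image inside a connected component of $O\cap K$, and invoke Lemma~\ref{lineseg}. The one point you flag (that $\varphi_\omega(C)$ need only sit inside some component of $O\cap K$, not be one) is exactly how the paper handles it as well.
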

\begin{proof}
Let $C$ be a connected component of $K$.
We can suppose that $C$ contains at least two points since otherwise the assertion is obvious.
Choose $O$ to be some feasible open set for the SOSC and let $x\in K\cap O$.
Let $\eps=\dist(x,\partial O)$.
Let $\varphi_{1},...,\varphi_{k}$ be similarities generating $K$ with ratios $r_{1},...,r_{k}.$
If we put $r:=\min_{i=1,...,k} r_{i}>0$, we can find $n\in\en$ such that $r^{n}\diam K<\eps.$
Choose $\omega\in\Sigma_k^{n}$ such that $x\in\varphi_{\omega}(K)$ and let $C'=\varphi_\omega(C)$. By construction, we have $C'\subset\varphi_\omega (K)\subset O\cap K$.
Let $A$ be the connected component of $K\cap O$ such that $C'\subset A.$
Then, by Lemma~\ref{lineseg}, $A$ is a line segment and therefore the same is true for $C'$ and $C.$
\end{proof}

As a byproduct we obtain the following result which complements the results in Theorem~\ref{thm:conn-compl} and Corollary~\ref{cor:disconn} in providing another simple geometric condition which ensures the $0$-th scaling exponent to coincide with the dimension. Again, under the curvature bound condition \eqref{eq:cbc} or polyconvexity, the inequality in Corollary~\ref{cor:connected_sets} becomes an equality, cf.~Remark~\ref{rem:cbc}.

\begin{cor} \label{cor:connected_sets}
Suppose that $K$ is a connected self-similar set in $\er^{2}$ satisfying OSC with $1<\dim_M K<2.$
Then $s_{0}(K)\geq\dim_M K$.
\end{cor}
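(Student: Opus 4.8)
The plan is to argue by contraposition, the entire content being a reduction to Corollary~\ref{compline}. Suppose, contrary to the assertion, that $s_0(K)<\dim_M K$. By the standing hypothesis we also have $\dim_M K<2$, so the pair of strict inequalities $s_0(K)<\dim_M K<2$ required in Corollary~\ref{compline} is in force. That corollary then tells us that every connected component of $K$ is a (possibly degenerate) line segment. But $K$ is assumed to be connected, hence $K$ is its own unique connected component; consequently $K$ is itself a line segment or a single point. In either case $\dim_M K\le 1$, which contradicts the hypothesis $\dim_M K>1$. Therefore $s_0(K)\ge\dim_M K$, as claimed.

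I expect no genuine obstacle here: the statement is little more than a reformulation of Corollary~\ref{compline} in the connected setting, the only real point being that connectedness forces the (a priori merely componentwise) one-dimensionality furnished by that corollary to propagate to the whole set $K$. No regularity or curvature bound assumption enters the argument beyond what is already incorporated into the cited results; in particular the conclusion $s_0(K)\ge\dim_M K$ is all that one can assert in general, and under the additional curvature bound condition \eqref{eq:cbc} (or polyconvexity of the parallel sets), combined with Remark~\ref{rem:cbc}, it upgrades to the equality $s_0(K)=\dim_M K$. Finally, the lower bound $\dim_M K>1$ in the hypothesis is exactly what is needed for the contradiction to bite: for a genuine line segment one has $\dim_M K=1$ but $s_0(K)=0$, so the conclusion does fail once $\dim_M K=1$ is allowed.
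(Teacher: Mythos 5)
Your argument is correct and is essentially the same as the paper's: assume $s_0(K)<\dim_M K$, invoke Corollary~\ref{compline} together with connectedness to conclude that $K$ is a (possibly degenerate) line segment, and derive a contradiction with $\dim_M K>1$. The additional remarks about upgrading to equality under \eqref{eq:cbc} and about sharpness at $\dim_M K=1$ are consistent with Remark~\ref{rem:cbc} and the surrounding discussion.
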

\begin{proof}
   Suppose for a contradiction that $s_0(K)<\dim_M K$. Then, by Corollary~\ref{compline} and the connectedness of $K$, $K$ must be a line segment. But this implies $\dim_M K=1$ in contradiction with the assumption $1<\dim_M K$.
\end{proof}

In Corollary~\ref{compline}, it is stated that all connected components of $K$ are line segments. The next step towards to proof of local $1$-flatness is to show that all the segments in $K$ are parallel to each other. This follows immediately from the following statement.

\begin{lem}
Let $K\subset\er^{2}$ be a self-similar set satisfying OSC such that $s_{0}(K)<\dim_M K<2$.
Let $\varphi_{1},...,\varphi_{k}$ be similarities generating $K.$
Then no $\varphi_{i}$ can include a rotation by an angle different to $\pi.$
\end{lem}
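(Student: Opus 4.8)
The plan is to argue by contradiction: assuming that some generator $\varphi_{i_0}$ has rotational part the rotation by an angle $\theta\notin\pi\Z$, I would derive $s_0(K)\ge\dim_M K$ via Proposition~\ref{prop:w1-2-3-8}, contradicting the hypothesis. Throughout I identify $s_0(K)$ with $\tilde s_0(K)$ by Corollary~\ref{exp=direxp}, so the conclusion $\tilde s_0(K)\ge\dim_M K$ of Proposition~\ref{prop:w1-2-3-8} is exactly what is needed.

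First I would extract the structural consequences of the hypothesis. Since $s_0(K)<\dim_M K$, Corollary~\ref{cor:disconn} shows that $K$ is not totally disconnected, so by Corollary~\ref{compline} it has a connected component $C$ which is a nondegenerate line segment, of direction $d$ say. Then $\varphi_{i_0}(C)\subset K$ is a nondegenerate segment of direction $d+\theta\not\equiv d\pmod\pi$; being connected it is contained in a component of $K$, which by Corollary~\ref{compline} is again a line segment, necessarily distinct from $C$ and hence disjoint from it. Thus $K$ contains nondegenerate segments in two distinct directions, and, since $\varphi_\omega$ maps any such pair into $K$, scaling its configuration by $r_\omega$ and rotating it rigidly, one obtains non-parallel segment pairs of $K$ at arbitrarily small scales and close to any prescribed point. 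I would also record, from Corollary~\ref{compline}, that two non-parallel segments contained in $K$ can never meet, so that $\partial K_\eps$ only ever carries circular caps and reflex corners, never convex ones.

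The decisive step is to place a non-parallel pair in a position to which Proposition~\ref{prop:w1-2-3-8} applies: one needs a feasible open set $O$ realizing the SOSC, constants $\eps_0,\beta>0$, and an open ball $B$ with $B\subset O_{-\eps_0}$ (or $\partial B\subset K$) such that $C_0^\var(K_\eps,B)\ge\beta$ for almost every regular $\eps\in(r_{\min}\eps_0,\eps_0]$. To produce this I would use the cluster decomposition from the proof of Lemma~\ref{bigpieces}: the level-$m$ clusters $C^m_i=\bigcup_{\omega\in\Gamma^m_i}\varphi_\omega(K)$ are mutually separated by $3\eps_m>0$, so that curvature computations localize to a single cluster by the locality property, and one arranges a cluster $P\subset O$ inside which a non-parallel pair $S,S'$ of $K$ (with a fixed angle $\gamma>0$ between them, lengths and mutual distance of a common order $r$) sits ``isolated'', i.e.\ so that $B_0\cap P=B_0\cap(S\cup S')$ for a suitable ball $B_0$. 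A direct computation then shows that the $\eps$-neighborhood of two such segments has, for $\eps$ comparable to $r$, a reflex corner of exterior angle bounded below by a positive constant $\beta(\gamma)$; choosing the cluster at a level for which the corresponding scale-interval contains a whole octave of the form $(r_{\min}\eps_0,\eps_0]$, Proposition~\ref{prop:w1-2-3-8} applied to the curvature measures $C_k$ yields $\tilde s_0(K)\ge\dim_M K$, the desired contradiction.

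I expect the main obstacle to lie precisely in this last step: making rigorous the assertion that a non-parallel segment pair of $K$ can be found ``isolated within a cluster'' at a controlled scale, so that locality reduces the curvature of $K_\eps$ near a fixed deep ball to that of the two-segment configuration, and then verifying that the resulting reflex corner persists throughout a scale-octave of the form required by Proposition~\ref{prop:w1-2-3-8}. The remaining ingredients — the non-total-disconnectedness of $K$, the reduction of its connected components to line segments, and the invocation of Proposition~\ref{prop:w1-2-3-8} — are routine given what has already been established in this section.
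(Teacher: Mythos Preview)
Your strategy differs substantially from the paper's, and the gap you flag is real.  The paper does \emph{not} use Proposition~\ref{prop:w1-2-3-8} or any curvature estimate here; it gives a purely geometric trapping argument.  Combining Lemma~\ref{bigpieces} with Corollary~\ref{compline} yields a ball $B(z,\delta)$ in which every component of $K$ is a segment of length at least $\delta$.  There are infinitely many such segments (otherwise $K$ would locally coincide with a single segment, forcing $K$ itself to lie on a line and contradicting the presence of a nontrivial rotation), and they accumulate; pick two nearby ones $L,L_p$ with centres within $\delta/30$ of $z$, bounding a thin strip $\conv(L\cup L_p)$.  A deep iterate $\varphi_\omega(K)$ fits inside this strip, and applying a suitable power of the rotating map $\varphi_b$ to a segment of $\varphi_\omega(K)$ produces a segment whose direction lies in $[\pi/4,3\pi/4]$.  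The component $M$ of $K$ containing it must then be a segment of that transverse direction and of length at least $\delta$ --- but $M$ meets the thin strip, so it would have to cross $L$ or $L_p$, contradicting the disjointness of distinct components.

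Your route via Proposition~\ref{prop:w1-2-3-8} is natural in spirit, but the isolation step you single out is a genuine obstacle rather than a technicality.  Locality only reduces $C_0^{\var}(K_\eps,B_0)$ to $C_0^{\var}(P_\eps,B_0)$ for the \emph{whole} cluster $P$, and there is no mechanism in the cluster construction that forces $P\cap B_0$ (or even $P$ near $B_0$) to consist of just the two segments $S,S'$: the cluster containing a small iterate of the pair may well contain many further pieces of $K$ that interfere with the two-segment curvature picture.  Nor is it clear that the non-parallel pair can be placed so that the reflex-corner regime persists across a full interval $(r_{\min}\eps_0,\eps_0]$.  The paper's argument sidesteps all of this by never appealing to curvature; it exploits instead the \emph{length} lower bound from Lemma~\ref{bigpieces}, which makes a transverse component too long to fit in the strip.
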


\begin{proof}
Suppose for a contradiction that one of the mappings, say  $\varphi_{b}$, includes a rotation by angle $\alpha\notin\{0,\pi\}$.
Combining Corollary~\ref{compline} and Lemma~\ref{bigpieces} there exists some $z\in K$ and $\delta>0$ such that
every connected component of $K$ that intersects $B(z,\delta)$ is a line segment with length bigger than $\delta.$
Let $\Gamma$ be the system of all connected components of $K$ intersecting $B(z,\delta).$
Then $\Gamma$ is infinite since otherwise $K\cap B(z,\gamma)=A\cap B(z,\gamma)$, for some $\gamma>0,$ where $A$ is the connected component of $K$ containing $z.$
But this would mean that $K$ contains a line segment and is contained in a line segment which is not possible due to the existence of $\varphi_{b}$.
Moreover $\Gamma$ is closed in the Hausdorff metric.
Therefore $\Gamma$ has an accumulation point, which means in particular that there are pairwise disjoint line segments $L_{i}=[x_{i},y_{i}],L=[x,y]\subset K,$ with $|x_{i}-y_{i}|=|x-y|=\delta$ and such that $L_i\to L$ in the Hausdorff metric.

Put $c_{i}=\frac{x_{i}+y_{i}}{2}.$
Without loss of generality we can suppose that $x=(-\frac{\delta}{2},0)$ and $y=(\frac{\delta}{2},0).$
Then $c_i\to(0,0)$ as $i\to\infty.$
Choose $p,q$ in a way that $|c_{p}|,|c_{q}|<\frac{\delta}{30}.$
We can suppose that $c_{q}\in \conv(L\cup L_{p}).$
Define $\rho:=\dist(c_{q},L\cup L_{p})$, let $r_{i}$ be the similarity ratio of $\varphi_{i}$ and $r:=\max_{i}r_{i}.$
Choose $n\in\en$ such that $r^{n}\diam K<\rho$ and find $\omega\in\Sigma_k^{*}$ in a way that $|\omega|>n$ and such that $c_q\in \varphi_\omega(K)$.
In particular, this means that $\varphi_{\omega}(L)\subset\conv(L\cup L_{p}).$
Now, on the one hand, for some $k>0$, the direction of the line segment $\varphi_{\omega}\circ\varphi_b^k(L)$ is contained in $[\frac{\pi}{4},\frac{3\pi}{4}]$. Let $M$ be the connected component of $K$ containing $\varphi_{\omega}(L)\circ\varphi_b^k(L)$. By Corollary~\ref{compline}, $M$ is a line segment (with direction in $[\frac{\pi}{4},\frac{3\pi}{4}]$), and, by Lemma~\ref{bigpieces}, the length of $M$ is bigger than $\delta$ (since $\varphi_{\omega}(L)\circ\varphi_b^k(L)\cap B(z,\delta)\not=\emptyset$).
On the other hand, $\varphi_{\omega}\circ\varphi_b^k(L)\subset\conv(L\cup L_{p})$.
But this is not possible since in this case $M$ would intersect $L$ or $L_{p}.$
\end{proof}

\begin{cor}\label{parallel}
Let $K\subset\er^{2}$ be a self-similar set satisfying OSC such that $s_{0}(K)<\dim_M K<2.$
Then all components of $K$ are mutually parallel line segments or singletons.
\end{cor}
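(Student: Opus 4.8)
The plan is to combine Corollary~\ref{compline} with the lemma immediately preceding this corollary. By Corollary~\ref{compline}, every connected component of $K$ is a (possibly degenerate) line segment, so the only thing left to prove is that no two non-degenerate components of $K$ have different directions. Suppose, for contradiction, that $L$ and $M$ are non-degenerate components with non-parallel directions $d_1\neq d_2$. Applying the preceding lemma not only to a chosen generating system $\varphi_1,\dots,\varphi_k$, but, for every $n$, to the (also OSC-) system $\{\varphi_\omega:\omega\in\Sigma_k^n\}$ which again generates $K$, we get that no iterated similarity $\varphi_\omega$ contains a proper rotation by an angle outside $\{0,\pi\}$; hence the orthogonal part of each $\varphi_\omega$ is $\pm\Id$ or a reflection, and (considering products $\varphi_i\varphi_j$) all reflection axes are mutually parallel or orthogonal, so the group $G$ of orthogonal parts of the $\varphi_i$ is finite. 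In particular every $\varphi_\omega$ maps a line segment to a segment whose direction lies in the finite $G$-orbit of the original one.

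Next I would localise the situation as in the proof of the preceding lemma. By Lemma~\ref{bigpieces} together with Corollary~\ref{compline} there are an open set $U$ contained in a feasible open set, with $U\cap K\neq\emptyset$, and a $\delta>0$ such that every connected component of $K$ meeting $U$ is a line segment of length at least $\delta$. Using density of orbits one finds iterates of $L$ and of $M$ meeting $U$; each such iterate lies in a component of $K$ meeting $U$, hence in a segment of length $\geq\delta$, and its direction lies in the $G$-orbit of $d_1$, resp.\ $d_2$. Since $G$ is finite, a pigeonhole argument yields, inside $U$, infinitely many long components in each of two fixed directions $a$ and $b$; moreover $a\neq b$, because otherwise some element of $G$ would send $d_2$ to $d_1$, which, combined with the parallel/orthogonal structure of the reflection axes and with $d_1\neq d_2$, forces a third direction to occur among the long components meeting $U$ as well. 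Also, this family of components meeting $U$ is infinite, for if it were finite, then some $\varphi_\sigma(K)$ would lie in $U$ and hence be a finite union of segments, and one checks, exactly as in the proof of the preceding lemma, that this forces $K$ to lie on a line, contradicting $d_1\neq d_2$.

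The remaining, and main, step is the geometric squeezing argument, exactly as in the last paragraph of the proof of the preceding lemma. Since the infinite family of long components meeting $U$ is closed in the Hausdorff metric and its members have length in $[\delta,\diam K]$, it has an accumulation point; passing to a subsequence one obtains pairwise disjoint segments $C_i\to C_\infty$, all of length $\geq\delta$ and, after a further extraction, all of direction $a$, say. Taking a long component $M'$ meeting $U$ of the transverse direction $b$ and pushing a sufficiently deep iterate $\varphi_\sigma(M')$ into the thin strip $\conv(C_\infty\cup C_p)$ between $C_\infty$ and a nearby $C_p$, the component of $K$ containing $\varphi_\sigma(M')$ is, by Corollary~\ref{compline} and Lemma~\ref{bigpieces}, a segment of length $\geq\delta$ and direction $b$; being trapped in such a thin strip with a transverse direction it must meet $C_\infty$ or $C_p$, contradicting that these are distinct components of $K$. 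This contradiction shows that only one direction occurs, so all non-degenerate components of $K$ are mutually parallel line segments and all other components are singletons. I expect the main obstacle to be making this squeezing argument precise (locating $\varphi_\sigma(M')$ in the strip and deducing the forced intersection, just as in the proof of the preceding lemma), together with the bookkeeping -- via iterates with trivial orthogonal part, or the pigeonhole above -- that prevents reflections from secretly identifying the two directions $a$ and $b$.
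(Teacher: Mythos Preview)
The paper gives no proof of this corollary, presenting it as an immediate consequence of the preceding unnamed lemma together with Corollary~\ref{compline}. Your argument is essentially correct and supplies the (not entirely trivial) details, following the squeeze technique of the preceding lemma's proof. A few remarks:

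Your worry about reflections and the bookkeeping needed to obtain an iterate $\varphi_\sigma$ with trivial orthogonal part is justified---such a $\sigma$ mapping into a prescribed strip need not exist in general. But this detour is unnecessary. For \emph{any} $\sigma$ with $\varphi_\sigma(K)$ contained in the thin strip $\conv(C_\infty\cup C_p)$, the two segments $\varphi_\sigma(L)$ and $\varphi_\sigma(M)$ make the fixed angle $\theta>0$ with one another (since $g_\sigma$ is an isometry). Hence at least one of them makes angle $\ge\theta/2$ with the strip direction $a$; the component of $K$ containing that one meets $U$, hence has length $\ge\delta$ by Lemma~\ref{bigpieces}, and must therefore cross $C_\infty$ or $C_p$, giving the contradiction. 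With this observation, the group $G$ and the preceding lemma are not needed at all for the corollary---it follows directly from Corollary~\ref{compline}, Lemma~\ref{bigpieces} and the squeeze argument.

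Similarly, your derivation that $a\neq b$ is more involved than necessary: once you fix a single $\omega$ with $\varphi_\omega(K)\subset U$, the components containing $\varphi_\omega(L)$ and $\varphi_\omega(M)$ automatically have the distinct directions $g_\omega d_1\neq g_\omega d_2$.
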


The following statement provides the last missing piece of the proof of Theorem~\ref{thm:main-R2}.

\begin{prop}\label{locf2}
Let $K\subset\er^{2}$ be a self-similar set satisfying OSC such that $s_{0}(K)<\dim_M K<2.$
Then $K$ is locally $1$-flat.
\end{prop}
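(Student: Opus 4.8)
The plan is to upgrade the structural information we have already collected into the combinatorial statement needed for local $1$-flatness. By Corollary~\ref{parallel}, all connected components of $K$ are mutually parallel line segments (or singletons); fix a coordinate system in which these segments are horizontal (parallel to the first coordinate axis). By Corollary~\ref{compline} together with Lemma~\ref{bigpieces}, there is a feasible open set $O$, a point $z\in K\cap O$ and a $\delta>0$ such that $B(z,\delta)\subset O$ and every connected component of $K$ meeting $B(z,\delta)$ is a horizontal segment of length at least $\delta$. Passing, as in the proofs of Corollary~\ref{compline} and Lemma~\ref{rectangles}, to an iterate $\varphi_\omega$ with $\varphi_\omega(K)\subset B(z,\delta/3)\cap O$, we may replace $K$ by $\varphi_\omega(K)$: since local $1$-flatness of $\varphi_\omega(K)$ and of $K$ are equivalent (apply Proposition~\ref{prop:one-point-flat}, whose hypothesis only concerns a single cube at a single point), it suffices to exhibit one flat cube for $\varphi_\omega(K)$.

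First I would dispose of the case where $K$ is itself contained in a horizontal line segment: then $K$ has $1$-dimensional affine hull and is trivially locally $1$-flat (indeed any subsquare of a neighbourhood of a point of $K$ works). So assume $K$ is not contained in a line. Now pick a point $x\in\varphi_\omega(K)$ lying in the \emph{relative interior} of the component $A$ of $K$ through $x$ (such a point exists because $A$ has positive length). Choose a small closed square $C$ centred at $x$ with sides parallel to the coordinate axes, small enough that $C\subset B(z,\delta)$ and small enough that $C$ meets only components of $K$ of length $\geq\delta$ — each of which therefore crosses $C$ completely from left to right. The key claim is that
$$
C\cap K=\pi_1(C)\times P\qquad\text{where}\qquad P:=\pi_2(C\cap K),
$$
$\pi_1,\pi_2$ being the two coordinate projections; this, together with the fact that $x$ lies in $(C\cap K)$ relative to $C^\circ$ (so $C^\circ\cap K\neq\emptyset$), is exactly condition (ii) of Proposition~\ref{prop:one-point-flat}, and hence gives local $1$-flatness of $K$.

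To prove the claim, note that ``$\subseteq$'' is automatic. For ``$\supseteq$'': take $(a,b)\in\pi_1(C)\times P$. Since $b\in P$, there is a point $(a',b)\in C\cap K$; it lies on some connected component $A'$ of $K$, and $A'$ meets $C$, hence is a horizontal segment of length $\geq\delta$, hence $A'$ contains the whole horizontal chord of $C$ at height $b$ — in particular $(a,b)\in A'\subset K$, and $(a,b)\in C$. This proves the claim. The main obstacle — and the reason the previous lemmas were needed — is precisely the verification that \emph{every} component meeting the small square $C$ is long (length $\geq\delta$, forcing a full horizontal crossing) rather than a short stub; this is what Lemma~\ref{bigpieces} delivers after the reduction to an iterate inside $B(z,\delta)$. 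One small point to double-check is that shrinking $C$ does not accidentally make $C^\circ\cap K$ empty: this is guaranteed because $x$ is in the relative interior of the segment $A$, so a neighbourhood of $x$ along $A$ stays in $C^\circ\cap K$. Assembling these observations completes the proof, and combining Proposition~\ref{prop:flatness_in R2} with Proposition~\ref{locf2} yields assertion (ii) of Theorem~\ref{thm:main-R2}.
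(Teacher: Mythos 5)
Your reduction to the combinatorial claim ``$C\cap K$ is a product set'' is the same as the paper's, and the easy inclusion is fine, but the key step of your ``$\supseteq$'' direction has a genuine gap. You argue that any component $A'$ meeting the small square $C$ has length $\geq\delta$ and \emph{therefore} crosses $C$ completely, so that it contains the whole chord of $C$ at its height. This does not follow: a horizontal segment of length $\geq\delta$ that meets a square of side $\eps\ll\delta$ may perfectly well have one of its endpoints in the interior of $C$ (it only needs to exit $C$ through \emph{one} side, not both). Shrinking $C$ around a point $x$ in the relative interior of its own component controls the endpoints of that one component, but not of the (typically infinitely many) other components meeting $C$, whose endpoints could a priori accumulate anywhere in $K$. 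So the product structure is not established.

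Ruling out such terminating segments is exactly the non-trivial content of the proposition, and it is where the hypothesis $s_{0}(K)<\dim_M K$ must actually be used (your argument never invokes it beyond citing the structural corollaries). The paper's proof handles this by contradiction: if some vertical line $S_p=\{p\}\times[0,1]$ through a point of $P$ is not entirely contained in $K$, then using compactness and total disconnectedness of $P$ one finds a nearby abscissa $c$ isolated from $P$ on one side, at which a component of $K$ genuinely terminates inside the cube; the boundary $\bd K_r$ then contains a quarter-circle of radius $r$ near that endpoint for all small $r$, giving a uniform lower curvature bound $\beta=\tfrac14$ on a fixed subset $B\subset O_{-\eps_0}$, and Proposition~\ref{prop:w1-2-3-8} then forces $s_0(K)\geq\dim_M K$, contradicting the hypothesis. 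You need an argument of this type (or some substitute) to close the gap; without it the proof is incomplete.
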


\begin{proof}
Fix some feasible open set $O$ for $K$ and find $U\subset O$ and $\delta>0$ as in Lemma~\ref{bigpieces}.
By Corollary~\ref{parallel}, all components of $K$ intersecting $U$ are mutually parallel line segments.
Let $v$ be a unit vector parallel to the direction of the components of $K$ and $v^\perp$ be a unit vector orthogonal to $v$.
Choose some $x\in K\cap U$ and some $0<\eps<\delta$ such that
$$
C:=[x,x+\eps v^{\bot}]\times[x,x+\eps v]\subset U_{-\eps}\subset O_{-\eps}.
$$
Without loss of generality, we can suppose $\eps=1$, $v=(0,1)$ and $x=(0,0)$.
Then $C=[0,1]\times[0,1].$
Define $P_0:=K\cap([0,1]\times\{0\})$, $P_1:=K\cap([0,1]\times\{1\})$ and let $P\subset\R$ be the projection of $P_0\cup P_1$ onto the first coordinate.
Note that $P$ is compact and totally disconnected since both $P_0$ and $P_1$ are (which is due to Lemma~\ref{parallel}).
We are done if we prove that $C\cap K=P\times[0,1]$, since in this case the local $1$-flatness of $K$ follows from Proposition~\ref{prop:one-point-flat}.

Suppose that this is not true.
Then there is some $p\in P$ such that the segment $S_p:=\{p\}\times [0,1]$ is not completely contained in $K$. Since, by construction, $S_p\cap K$ is closed and nonempty, there are $0\leq a<b\leq 1$ such that
$$
S_p\cap K\subset \{p\}\times ([0,a]\cup[b,1]).
$$
Fix some $0<\tau<\frac{b-a}{3}$. Since $K$ is closed, there is some $\gamma>0$ (with $\gamma<\tau$) such that
$$
S_q\cap K\subset \{q\}\times ([0,a+\tau]\cup[b-\tau,1])
$$
for every $q\in [p-\gamma,p+\gamma]\cap [0,1]$.

Since $P$ is compact and totally disconnected, we can find $c,d\in [p-\gamma,p+\gamma]\cap [0,1]\cap P$ such that the open interval $(c,d)$ is disjoint from $P$.
Because $c\in P$ and by Lemma~\ref{bigpieces}, the set $(\{c\}\times[0,1])\cap K$ is of the form $\{c\}\times ([0,a_c]\cup[b_c,1])$ (or with possibly one of the intervals $[0,a_c]$ or $[b_c,1]$ missing).
Since $c\in [p-\gamma,p+\gamma]\cap [0,1]$, we also have $b_c-a_c>\tau.$
Now, for every $r\leq\eps_0:=\min\{|c-d|/3,\tau/3\}$, the set $\partial K_r$ contains a quarter of a circle centered either at $(c,a_c)$ or $(c,b_c)$.
To complete the proof, observe that Proposition~\ref{prop:w1-2-3-8} (applied to $B=C$, $\beta=\frac{1}{4}$ and $\eps_{0}$ as just defined) together with Corollary~\ref{exp=direxp} implies $s_0(K)\geq\dim_M K$, a contradiction to the assumptions of Proposition~\ref{locf2}.
\end{proof}

\section{Final Remarks} \label{sec:final}

Summing up, we have shown that, although generically all the curvature scaling exponents of a self-similar set coincide, there are nontrivial sets which do not show such generic behaviour. We have demonstrated that nongeneric behaviour is closely connected with the notion of local flatness -- it is characteristic at least in dimensions 1 and 2.

\medskip

\paragraph{\bf Possible combinations of scaling exponents.} In the introduction we have raised the question, for which vectors $(t_0,\ldots, t_d)\in\R^{d+1}$ there exists a self-similar set $K\subset\R^d$ with $s_k(K)=t_k$ for $k=0,\ldots,d$, which we will briefly address now. First of all it should be noted that $0\leq\udim_M F\leq d$ for any bounded set $F\subset\R^d$, implying that only for vectors with $0\leq t_d\leq d$ such a set can exist. The same constraints apply to $t_{d-1}$, as is transparent from the results in \cite{rw09}. In fact, we get a much stronger constraint from the fact (proved in \cite{rw09}) that for any bounded set $F\subset\R^d$ either $s_{d-1}(F)=s_d(F)$ or $\lambda_d(F)>0$ (implying $s_d(F)=d$ and $s_{d-1}(F)\geq {d-1}$). Effectively, this reduces the problem by one dimension leaving (subsets of) two hyperplanes of possible parameter vectors.  Since, by definition, $s_k\geq 0$ for all $k$, we have also the  constraint $t_k\geq 0$.  Imposing additionally the curvature bound condition \eqref{eq:cbc}, leads to the constraint $t_k\leq t_d$ for the parameter vectors, cf.~\cite[Theorem~2.2]{WZ}.  In $\R^2$, for instance, all vectors of scaling exponents must either be of the form $(t_0, D, D)$ with $0\leq t_0\leq D$ and $D\in[0,2]$ or of the form $(t_0,t_1,2)$  with $0\leq t_0\leq 2$ and $t_1\in[1,2]$. Figure~\ref{fig:sec7} illustrates, for self-similar sets in $\R^2$, which combinations of scaling exponents may be possible and for which combinations of scaling exponents some self-similar sets are known.

\begin{figure}
\begin{minipage}{65mm}
  \includegraphics[width=67mm]{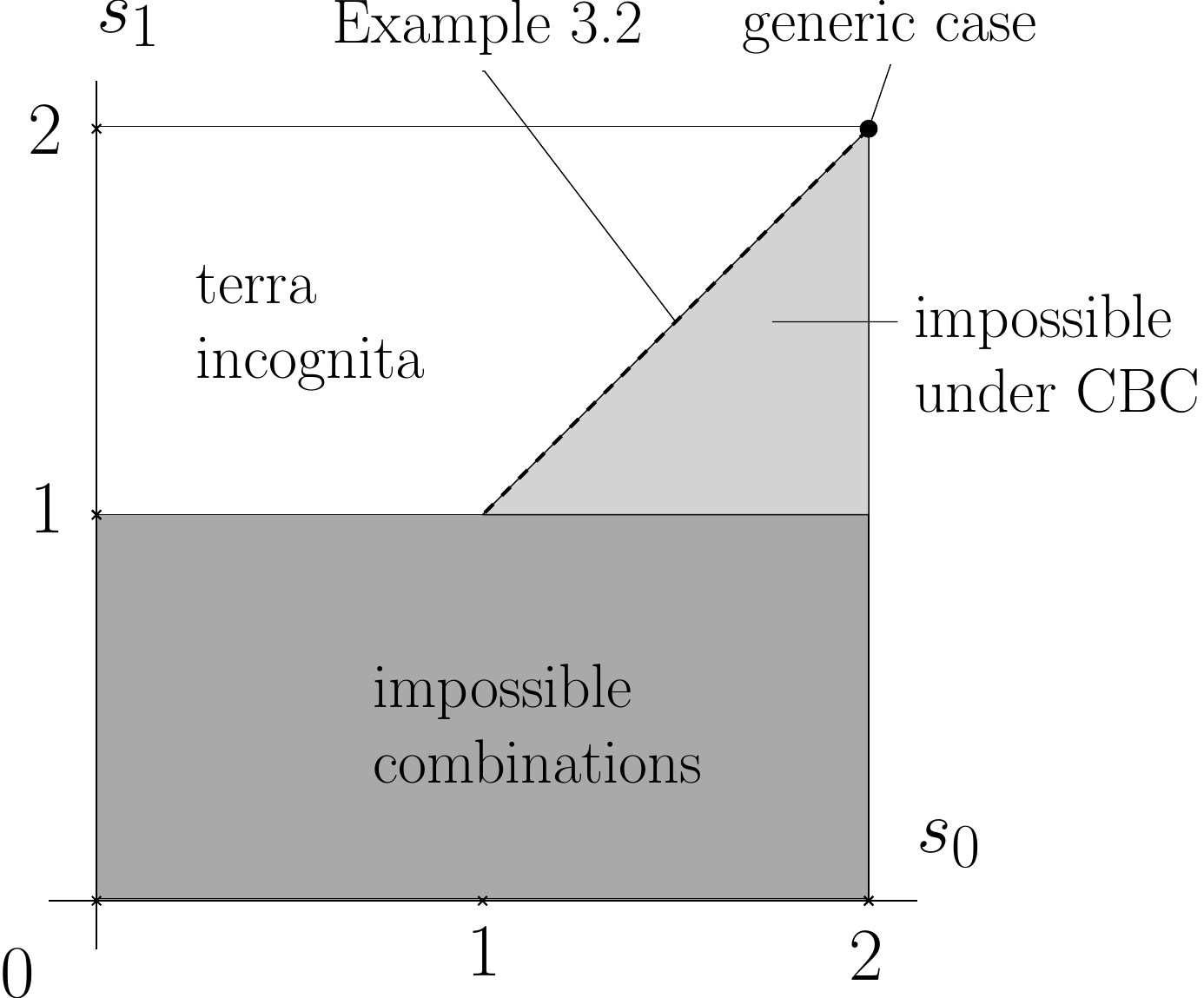}
 \end{minipage}
\begin{minipage}{60mm}
  \includegraphics[width=53mm]{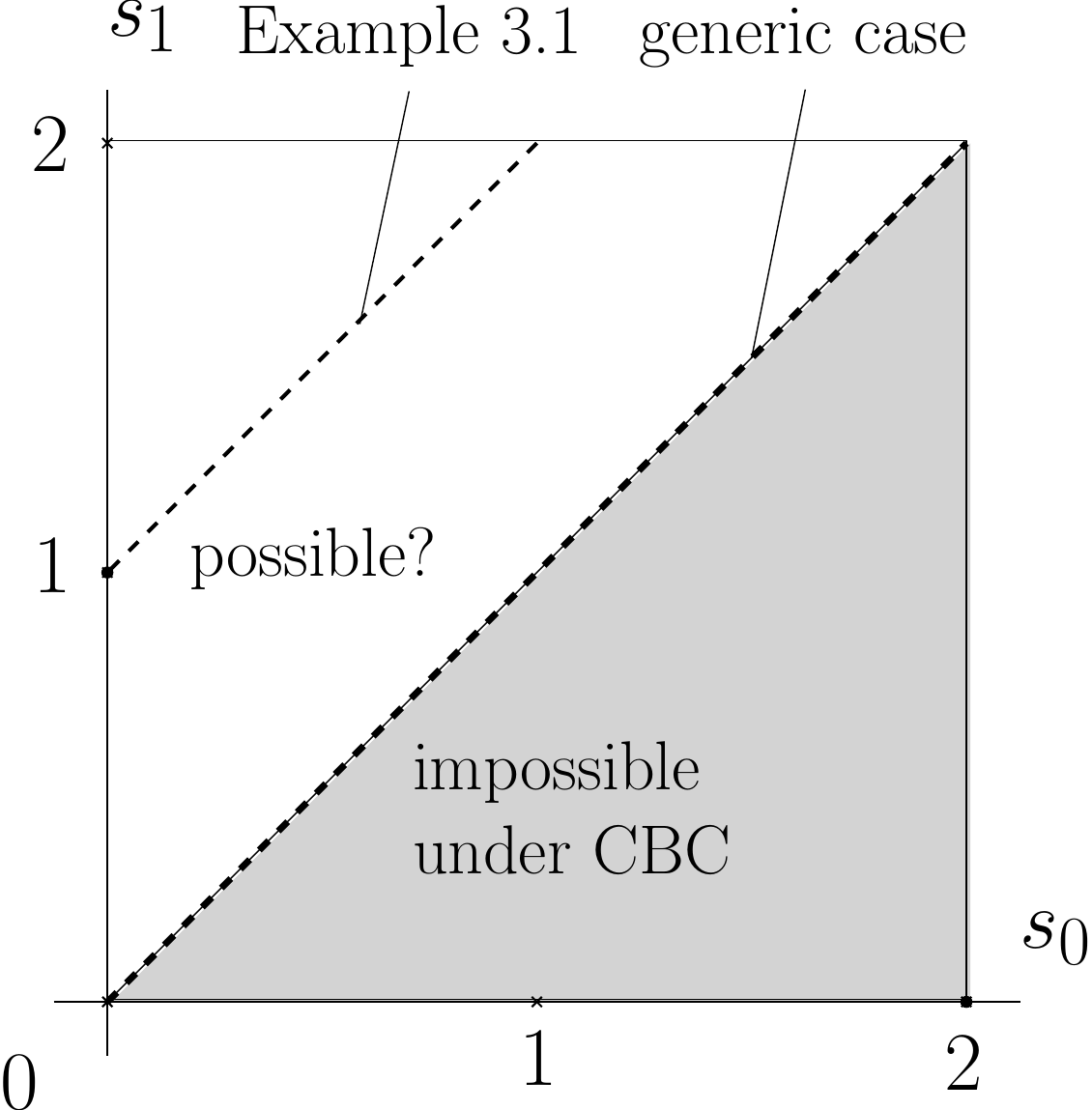}
\end{minipage}
\caption{\label{fig:sec7} Possible and impossible combinations of scaling exponents $s_0,s_1$ for self-similar sets in $\R^2$ in the case $s_2=2$ (Left) and in the case $s_1=s_2$ (Right). Grey shaded regions refer to combinations confirmed to be impossible, white regions to combinations that are still possible. The dotted lines refer to pairs of scaling exponents for which sets are known.}
\end{figure}
Note that for the sets in Example~\ref{ex:1}, the scaling vectors are of the form $(t_0,t_0+1,t_0+1)$ with $t_0\in[0,1]$ (that is they are all on one line in the above mentioned triangle), while the sets in Example~\ref{ex:2} are of the form $(t_0,t_0,2)$ with $t_0\in[1,2]$ (that is, they are all on one line in the rectangle).

If the same questions are asked for the average scaling exponents $a_k(K)$, the answers are exactly the same, if the (weaker) integrability condition \eqref{eq:ic} is imposed instead of CBC \eqref{eq:cbc}.

It is an open question, for which vectors within the spotted regions there exist self-similar sets with those scaling exponents. It would in particular be good to know, whether the relation $s_0\le s_1 \le \ldots \leq s_{d-1}$ holds in general. More specifically, is it true that all locally $1$-flat self-similar sets in $\R^2$ satisfy the equation $s_1(K)=s_0(K)+1$ that we found for the sets in Example~\ref{ex:1}? We hope that further investigations will provide answers to these questions.

\medskip

\paragraph{\bf Scaling exponents of general sets.} The following example shows that for general sets in the plane we can expect a much wider behaviour than in the self-similar setting. We prescribe the scaling exponents $s_0$ and $s_1$ (within a certain range) and construct a set with exactly these exponents. This is much more than we were able to do in the self-similar setting; in Example~\ref{ex:1} we only constructed sets with $s_1-s_0=1$.

\begin{ex} \label{ex:general}
Suppose that $2> b\geq a\geq b-1>0$.
Then there is a compact set $K=K(a,b)\subset\er^{2}$ such that $s_{0}(K)=a$ and $s_{1}(K)=b(=s_2(K)).$
\end{ex}
\begin{proof}
Fix $a,b$ as above.
Then there is a number $q$ with $0<q<\frac{1}{2}$ such that $2q^{b-1}=1.$
For $n\in\N_0$ and each $i\in\N$ such that $2^{n}\leq i<2^{n+1},$ define $r_{i}:=q^{n}(1-2q)$ and $t_{i}:=\left\lfloor \frac{q^{(b-a-1)n}}{1-2q}\right\rfloor$, where $\lfloor x\rfloor$ denotes the integer part of a number $x\geq 0$.
Then $r_{i}\geq r_{i+1}$, $\sum_{i=1}^{\infty} r_{i}=1$ and $1\leq t_{i}\leq \frac{1}{r_{i}}$.
Set $p_{i}:=\sum_{k=1}^{i}r_{k}$ and define the set $K=K(a,b)\subset\R^2$ by
\begin{equation*}
K=\bigcup_{i\in\en}\left(\left(\{p_{i-1}\}\times[0,1]\right)\cup \bigcup_{l=0}^{t_{i}}\left([p_{i-1},p_{i}]\times\left\{\frac{l}{t_{i}}\right\}\right)\right)\cup \left(\{1\}\times[0,1]\right).
\end{equation*}
Geometrically, the set $K$ is obtained by dividing the unit square into rectangles by vertical line segments with distances $r_{i}$ (to control $s_{1}(K)$)
and then dividing each of these rectangles into $t_{i}$ similar rectangles by adding horizontal line segments (to control $s_{0}(K)$).

Let $\varepsilon_{n}=\frac{1}{2}q^{n}(1-2q)=\frac{r_i}2$. Then, for  $\varepsilon_{n+1}\leq\varepsilon<\varepsilon_{n}$,
\begin{equation}\label{s0}
C^{\var}_{0}(K_{\varepsilon})=1+ \sum_{k=1}^{2^{n+1}-1}t_{k}
\end{equation}
and, since $C_1^\var(K_\eps)=C_1(K_\eps)$ is half the boundary length of $K_\eps,$
\begin{equation}\label{s1}
C^{\var}_{1}(K_{\varepsilon})=2+\pi \eps+\sum_{k=1}^{2^{n+1}-1}(1+t_{k}r_{k})-4\varepsilon\sum_{k=1}^{2^{n+1}-1}t_{k}.
\end{equation}
Due to the definition of $t_{i}$ and by the summation formula for geometric series, there are constants $c_1,c_2>0$ and $c_3$, where $c_3=0$ when $a=b,$ such that
\begin{equation*}
c_1(2q^{b-a-1})^{n}\geq 1+\sum_{k=1}^{2^{n+1}-1}t_{i}\geq c_2(2q^{b-a-1})^{n}-c_3 2^{n}.
\end{equation*}
Combining this with (\ref{s0}) and multiplying $\eps^a$, we infer that there are constants $c_1',c_2'>0$ and $c_3'$, where $c_3'=0$ when $a=b$, such that
\begin{equation*}
c_1'q^{an}(2q^{b-a-1})^{n}\geq \varepsilon^{a}C^{\var}_{0}(K_{\varepsilon})\geq c_2'q^{an}(2q^{b-a-1})^{n}-c_3'(2q^{a})^{n}.
\end{equation*}
Since $2q^{b-1}=1$ and either $0<2q^{a}<1$ or $c_3'=0$, we conclude that, for $n$ sufficiently large,
\begin{equation}\label{ests1}
c_1'\geq \varepsilon^{a}C^{\var}_{0}(K_{\varepsilon})\geq \frac{c_2'}{2}
\end{equation}
and therefore $s_{0}(K)=a.$
Similarly, using (\ref{s0}) and (\ref{s1}), there are constants $c_4,c_5>0$ such that
\begin{equation*}
c_4 2^{n}\geq C^{\var}_{1}(K_{\varepsilon})\geq c_5 2^{n} - 4\varepsilon C^{\var}_{0}(K_{\varepsilon}).
\end{equation*}
Using (\ref{ests1}), we infer that there are $c_4',c_5',c_6>0$ such that
\begin{equation*}
c_4'q^{(b-1)n}2^{n}\geq \varepsilon^{b-1}C^{\var}_{1}(K_{\varepsilon})\geq c_5'q^{(b-1)n}2^{n}-c_6\varepsilon^{b-a}.
\end{equation*}
Since $2q^{b-1}=1$ and $b-a>0$, we conclude that, for $n$ sufficiently large,
\begin{equation*}
c_4'\geq\varepsilon^{b-1}C^{\var}_{1}(K_{\varepsilon})\geq \frac{c_5'}{2}
\end{equation*}
and therefore $s_{1}(K)=b.$
\end{proof}

It is not difficult to see that in the example, the exponents $s_k$ can be replaced by $a_k$. It  remains an interesting open question, whether there exist sets $F\subset\R^2$ such that $a_0(F)>a_1(F)$. We believe this is not possible, however, up to now we have not been able to prove this.

\medskip

\paragraph{\bf Compatible self-similar tilings.}
Given a self-similar IFS $\{\varphi_1,\ldots,\varphi_N\}$ in $\R^d$ satisfying the OSC and a feasible open set $O$, in \cite{PW} a tiling $\sT=\sT(O)$ of the set $O$ is defined by setting $G:=O\setminus \Phi(\overline{O})$ (where $\Phi(A):=\bigcup_{j=1}^N \varphi_j(A)$ for $A\subset\R^d$) and
$$
\sT:=\{\varphi_\sigma(G): \sigma\in\Sigma^*_N\},
$$
that is, the tiles of $\sT$ are the iterates of the (open) set $G$, which is called the \emph{generator} of $\sT$.  Whenever the set $G$ is nonempty (which happens if and only if the associated self-similar set $F$ has no interior points, i.e., if $\dim_M F=d$), the family $\sT$ is a tiling of $O$ in the sense that the tiles $\varphi_\sigma(G)$ of $\sT$ are pairwise disjoint and the closure of their union equals the closure of $O$, i.e.\
$$
\overline{O}=\overline{\bigcup_{R\in\sT} R},
$$
see \cite[Theorem 5.7]{PW}.
Let $F$ be the self-similar set associated to $\{\varphi_1,\ldots,\varphi_N\}$. The self-similar tiling $\sT$ is called \emph{compatible}, if and only if $\bd O \subset F$. Compatibility is equivalently characterized by the condition $\bd G\subset F$ or by the equation
\begin{align}  \label{eq:compatible}
(F_\eps\setminus F)= T_{-\eps}\cup ({O}_\eps\setminus O)
\end{align}
for any (and thus all) $\eps>0$, see \cite[Theorem 6.2]{PW}.
Self-similar tilings have been used as a tool to study the geometric properties of self-similar sets, in particular,
to obtain fractal tube formulas and to introduce complex dimensions for self-similar sets in $\R^d$, see e.g.~\cite{LapvF,LP,LPW1}. These results have for instance been used in the characterization  of Minkowski measurability, see e.g.~\cite{LPW2}.
In view of equation \eqref{eq:compatible}, compatibility allows to transfer results from  tilings to the associated sets, and hence to replace the study of self-similar sets by the study of self-similar tilings, which turned out to be much easier in certain cases.

It is therefore an interesting question, to characterize those self-similar sets which possess a compatible self-similar tiling. That is, given a self-similar set $F$ (satisfying OSC and $\dim_M F<d$), does there exist a feasible set $O$ such that $\sT(O)$ is a compatible self-similar tiling? It is known from \cite{PW} that there exist self-similar sets (e.g.\ the Koch curve) which do not possess a compatible tiling. In fact, it is not difficult to see that a self-similar set $F$ possesses no compatible tiling if the complement of the set $F$ is connected, see \cite[Proposition~6.3]{PW}. Using an argument from the proof of Theorem~\ref{thm:conn-compl}, we can strengthen this observation to an if-and-only-if statement. A self-similar set $F$ has a compatible tiling if and only if its complement is not connected.

\begin{thm} \label{thm:tiling}
  Let $F$ be a self-similar set in $\R^d$ satisfying OSC and $\dim_M F<d$. Then the set $F$ possesses a compatible self-similar tiling $\sT$ (of some suitable feasible set $O$) if and only if $F^c$ is disconnected.
\end{thm}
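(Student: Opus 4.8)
We argue the two directions separately. If $F^c$ is connected, then $F$ possesses no compatible self-similar tiling by \cite[Proposition~6.3]{PW}; contraposing, a compatible self-similar tiling of $F$ forces $F^c$ to be disconnected. So it remains to prove the converse.

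Assume $F^c$ is disconnected. Let $\widehat F$ be the union of $F$ with all bounded connected components of $F^c$ — equivalently, the complement of the union of the unbounded components of $F^c$ — and set $O:=(\widehat F)^{\circ}$. I claim that $O$ is a feasible open set with $\bd O\subseteq F$; granting this, the hypothesis $\dim_M F<d$ makes the generator $G=O\setminus\Phi(\overline O)$ nonempty, so \cite[Theorems~5.7 and 6.2]{PW} yield that $\sT(O)$ is a compatible self-similar tiling, as required. Two parts of the claim are immediate. First, $O\neq\emptyset$: disconnectedness of $F^c$ provides a bounded component $B$ of $F^c$, and $B$ is disjoint from $F$ and from every unbounded component of $F^c$ (whose boundaries lie in $\bd F\subseteq F$), hence $B\subseteq\widehat F^{\circ}$. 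Second, $\bd O\subseteq\bd\widehat F\subseteq\bd F\subseteq F$.

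The inclusion $\Phi(O)\subseteq O$ is also routine: since $\varphi_i(F)\subseteq F$, every unbounded component of $F^c$ lies inside some unbounded component of $(\varphi_iF)^c$, and passing to complements and interiors gives $\varphi_i(O)\subseteq O$. The real obstacle is the disjointness $\varphi_i(O)\cap\varphi_j(O)=\emptyset$ for $i\ne j$, i.e.\ that the filled pieces $\varphi_i(\widehat F)$ have pairwise disjoint interiors. I would establish this by contradiction, using two ingredients: that $F\subseteq\overline{O_0}$ for any feasible open set $O_0$ (because $\Phi(\overline{O_0})\subseteq\overline{O_0}$ and $\Phi^{\,n}(\overline{O_0})\to F$ in the Hausdorff metric), and the hole-pushing construction from the proof of Theorem~\ref{thm:conn-compl}. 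A point $p\in\varphi_i(O)\cap\varphi_j(O)$ can, since $F$ is nowhere dense, be assumed after a small perturbation to lie in $\varphi_i(B_1)\cap\varphi_j(B_2)$ for some bounded components $B_1,B_2$ of $F^c$; the bounded open neighbourhood $W$ of $p$ these two holes cut out satisfies $\bd W\subseteq\varphi_i(F)\cup\varphi_j(F)\subseteq F$. Pushing $W$ into a strong feasible open set $O_0$ by an iterate $\varphi_\omega$ with $\conv(\varphi_\omega(F))\subseteq O_0$ and invoking $F\subseteq\overline{O_0}$, one produces two incomparable cylinders $\varphi_\sigma(O_0),\varphi_\tau(O_0)$ whose images are forced to meet, contradicting the OSC. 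Making this last step precise — in particular excluding that one piece $\varphi_j(F)$ can lie inside a bounded complementary component of another piece $\varphi_i(F)$, and handling the case $p\in F$ — is the genuinely delicate part of the proof; the remainder is bookkeeping.
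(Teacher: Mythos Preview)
Your forward direction is fine and matches the paper's. The reverse direction, however, contains a genuine gap precisely where you flag it: you have not established that $\varphi_i(O)\cap\varphi_j(O)=\emptyset$ for your choice $O=(\widehat F)^\circ$. Your sketch via ``hole-pushing'' does not rule out the scenario you yourself mention---that some $\varphi_j(F)$ sits entirely inside a bounded complementary component of $\varphi_i(F)$---and nothing in the OSC for an \emph{arbitrary} feasible set $O_0$ prevents this a priori. Without resolving this, $(\widehat F)^\circ$ is not known to be feasible, and the argument stalls.

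The paper avoids this difficulty entirely by a different and much shorter construction. Rather than filling in \emph{all} holes of $F$, fix a single bounded connected component $B$ of $F^c$; using the argument from the proof of Theorem~\ref{thm:conn-compl} (push $B$ by a suitable iterate $\varphi_\omega$), one may assume $B\subset O_0$ for some strong feasible open set $O_0$. Then set
\[
U:=\bigcup_{\sigma\in\Sigma_N^*}\varphi_\sigma(B).
\]
The invariance $\varphi_i(U)\subset U$ is immediate, and since $B\subset O_0$ forces $U\subset O_0$, the disjointness $\varphi_i(U)\cap\varphi_j(U)=\emptyset$ is inherited \emph{for free} from the OSC for $O_0$. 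Compatibility follows because $\bd B\subset F$, hence $\bd U\subset F$. The whole ``genuinely delicate part'' of your argument simply disappears once you work inside a known feasible set instead of trying to verify feasibility of $(\widehat F)^\circ$ from scratch.
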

\begin{proof}
If $F$ has a compatible tiling $\sT$ (of some feasible set $O$), then its generator $G$ satisfies $\bd G\subset F$. Since $F$ cannot cover the whole open set $G$, there must be a connected component of $F^c$ contained in $G$ which is bounded and thus not the unbounded connected component of $F^c$. Hence $F^c$ is disconnected, proving one direction.

For the reverse implication, assume that $F^c$ is disconnected or, which is the same, that $F^c$ has a bounded connected component $B\subset F^c$. Let $\{\varphi_1,\ldots,\varphi_N\}$ be an IFS generating $F$ and let $O$ be an arbitrary strong feasible open set for $F$. By the first part of the proof of Theorem~\ref{thm:conn-compl}, we can assume without loss of generality that $B\subset O$. Using $B$ we construct a new feasible open set $U$ for $F$ by setting
 $$
 U:=\bigcup_{\sigma\in\Sigma^*_N} \varphi_\omega(B).
 $$
 Indeed, it is easily seen that $\varphi_i(U)\subset U$ for $i=1,\ldots,N$. Moreover, since $B\subset O$ and thus $\varphi_\sigma(B)\subset \varphi_\sigma(O)\subset O$ for any $\sigma\in\Sigma^*_N$, we have $U\subset O$
 from which $\varphi_i(U)\cap \varphi_j(U)=\emptyset$ for $i\neq j$ is transparent. Hence $U$ is a feasible open set for $F$.
 The generator of the associated tiling $\sT(U)$ is $U\setminus \Phi(\overline{U})=B$. Since $\bd B\subset F$, we conclude that $\sT(U)$ is compatible. Hence we have constructed a compatible tiling for $F$, which completes the proof.
\end{proof}

\paragraph{\bf Acknowledgements.} During the work on this article the authors were supported by a Czech-German cooperation grant commonly funded by GA\v{C}R and DFG, project no. GA\v{C}R P201/10/J039 and DFG WE 1613/2-1.  We are grateful to A. Kravchenko and D. Mekhontsev, the authors of the software package \emph{IFS Builder 3d}, which we have used to create the figures of the examples in the paper. We thank J. Rataj, M. Zähle and T.~Bohl for helpful comments and fruitful discussions.

\end{document}